\newtheorem{thm}{Theorem}
\newtheorem{lem}[thm]{Lemma}
\newtheorem{cor}[thm]{Corollary}
\newtheorem{prop}[thm]{Proposition}
\theoremstyle{definition}
\newtheorem{defn}[thm]{Definition}
\theoremstyle{remark}
\newtheorem*{rmk}{Remark}
\newcommand{\E}{\mathbb{E}}
\newcommand{\DEF}{{:=}}
\newcommand{\FED}{{=:}}
\newcommand{\Sp}{\mathbb{S}}
\newcommand{\IL}{\mathbb{L}}
\newcommand{\IP}{\mathbb{P}}
\DeclareMathOperator{\dd}{\mathrm{d}}
\DeclareMathOperator{\discr}{discr}
\newcommand{\MARKED}[2]{{\textcolor{#1}{#2}}}
\newcommand{\bsp}{\boldsymbol{p}}
\newcommand{\bsPhi}{\boldsymbol{\Phi}}
\newcommand{\bsw}{\boldsymbol{w}}
\newcommand{\bsx}{\boldsymbol{x}}
\newcommand{\bsy}{\boldsymbol{y}}
\newcommand{\bsz}{\boldsymbol{z}}
\title[Point sets on the sphere $\mathbb{S}^2$ with small spherical cap discrepancy]{Point sets on the sphere $\mathbb{S}^2$ with small spherical cap discrepancy} 
\author[Ch. Aistleitner, J. S. Brauchart, and J. Dick]{Ch. Aistleitner\textdaggerdbl, J. S. Brauchart\textasteriskcentered, and J. Dick\textdagger} 
\thanks{\noindent \textdaggerdbl The research of this author was supported by the Austrian Research Foundation (FWF), Project
S9603-N23. \\  \textasteriskcentered The research of this author was supported by an Australian Research Council Discovery Project. \\
\textdagger The research of this author was supported by an Australian Research Council Queen Elizabeth 2 Fellowship.}
\date{\today}
\begin{document}

\address{Ch. Aistleitner, Graz University of Technology, Institute of
Mathematics A, Steyrergasse 30, 8010 Graz, Austria \\ 
J. S. Brauchart and J. Dick:
School of Mathematics and Statistics,
University of New South Wales,
Sydney, NSW, 2052,
Australia }
\email{aistleitner@math.tugraz.at}
\email{j.brauchart@unsw.edu.au}
\email{josef.dick@unsw.edu.au}

\begin{abstract}
In this paper we study the geometric discrepancy of explicit
constructions of uniformly distributed points on the two-dimensional
unit sphere. We show that the spherical cap discrepancy of random point sets, of spherical
digital nets and of spherical Fibonacci lattices
converges with order $N^{-1/2}$. Such point sets are therefore
useful for numerical integration and other computational
simulations. The proof uses an area-preserving Lambert map. A detailed analysis of the level curves and sets of the pre-images of spherical caps under this map is given.
\end{abstract}

\keywords{Discrepancy, Isotropic Discrepancy, Lambert Map, Level Curve, Level Set, Numerical Integration, quasi Monte Carlo, Spherical Cap Discrepancy} 
\subjclass[2000]{Primary 65D30, 65D32; Secondary }

\maketitle


\section{Introduction}

Let $\mathbb{S}^2 = \{ \bsz \in \mathbb{R}^3: \| \bsz \| = 1 \}$ be the unit sphere in the Euclidean space $\mathbb{R}^3$ provided with the norm $\| \cdot \|$ induced by the usual inner product $\bsx \cdot \bsy$. On this sphere we consider the Lebesgue surface area measure $\sigma$ normalised to a probability measure ($\int_{\mathbb{S}^2} \dd \sigma = 1$).

This paper is concerned with uniformly distributed sequences of points on $\mathbb{S}^2$. Informally speaking, a sequence of points is called uniformly distributed if every reasonably defined (clopen) $A \subseteq \mathbb{S}^2$ gets a fair share of points as their number $N$ grows.
Given a triangular scheme $\{ \bsz_{1,N}, \dots, \bsz_{N,N} \}$, $N \geq 1$, of points on $\mathbb{S}^2$ in such a case one has
\begin{subequations}
\begin{equation} \label{eq:uniform.a}
\lim_{N \to \infty} \frac{\mathrm{card}(\{ j : \bsz_{j,n} \in A \})}{N} = \sigma( A ),
\end{equation}
(where $\mathrm{card}$ denotes the cardinality of the set) or, equivalently (defined in terms of numerical integration)
\begin{equation} \label{eq:uniform.b}
\lim_{N \to \infty} \frac{1}{N} \sum_{j=1}^N f( \bsz_{j,N} ) = \int_{\mathbb{S}^2} f \dd \sigma \qquad \text{for every $f$ continuous on $\mathbb{S}^2$.}
\end{equation}
\end{subequations}
The degree of uniformity is quantified by the so called {\em spherical cap discrepancy}.

A spherical cap $C = C(\bsw,t)$ centred at $\bsw \in
\mathbb{S}^2$ with height $t \in [-1,1]$ is given by the set
\begin{equation*}
C(\bsw, t) = \left\{\bsy \in \mathbb{S}^2 : \bsw \cdot \bsy > t \right\}.
\end{equation*}
(We assume that spherical caps are open subsets of $\mathbb{S}^2$.) 
The boundary of $C(\bsw,t)$ then is 
\begin{equation*}
\partial C(\bsw,t) = \left\{\bsy \in \mathbb{S}^2: \bsw \cdot \bsy = t \right\}.
\end{equation*}

Let $Z_N = \{\bsz_0, \ldots, \bsz_{N-1}\} \subseteq \mathbb{S}^2$ be an $N$-point set on the sphere $\mathbb{S}^2$.
The local discrepancy with respect to a spherical cap $C$ measures
the difference between the proportion of points in $C$ (the
empirical measure of $C$) and the normalised surface area of $C$. The spherical cap discrepancy is then the supremum of the local discrepancy over all spherical caps, as stated in the following definition.

\begin{defn}
The {\em spherical cap discrepancy} of an $N$-point set $Z_N = \{\bsz_0,\ldots, \bsz_{N-1}\} \subseteq \mathbb{S}^2$ is
\begin{equation*}
D(Z_N) = \sup_{\bsw \in \mathbb{S}^2} \sup_{-1 \le t \le 1} \left| \frac{1}{N}\sum_{n=0}^{N-1} 1_{C(\bsw,t)}(\bsz_n) - \sigma(C(\bsz,t))\right|.
\end{equation*}
\end{defn}

If the point set $Z_N$ is well distributed, then this discrepancy is small. In fact, a sequence of $N$-point systems $(Z_N)_{N \geq 1}$ satisfying 
\begin{equation} \label{eq:D.to.0}
\lim_{N \to \infty} D(Z_N) = 0,
\end{equation}
is called {\em asymptotically uniformly distributed}. Using, for example, the classical {\em Erd{\"o}s-Tur{\'a}n type inequality} (cf. Grabner~\cite{Gr1991}, also cf. Li and Vaaler~\cite{LiVa1999}) or {\em LeVeque type inequalities} (Narcowich, Sun, Ward, and Wu~\cite{NaSuWa2010}) and the fact that the set of polynomials is dense in the set of continuous functions, one can show that \eqref{eq:D.to.0} is equivalent with \eqref{eq:uniform.b}. 

It is known from \cite{Be1984} 
that there are constants $c, C > 0$, independent of $N$, such that a {\em low-discrepancy} scheme $\{ Z_N^* \}_{N \geq 2}$ satisfies
\begin{equation} \label{eq:opt.D.C}
c \, N^{-3/4} \le D(Z_N^*) \le C \, N^{-3/4} \sqrt{\log N}.
\end{equation}
The lower bound holds for all $N$-point sets $Z_N$ on $\mathbb{S}^2$ and there always exists an $N$-point set $Z_N \subseteq \mathbb{S}^2$ such that the upper bound holds. The proof of the upper bound is probabilistic in nature and is thus non-constructive. \MARKED{Black}{To our best knowledge explicit constructions of low-discrepancy schemes are not known.} (In this paper we restrict ourselves to the sphere $\mathbb{S}^2$, though some of the results are known for spheres of dimension $d \ge 2$.)

An explicit construction of points $Z_N$ with small spherical cap
discrepancy has been given in \cite{LuPhSa1986, LuPhSa1987}. For instance, in \cite{LuPhSa1986} it was shown that
\begin{equation} \label{eq:LuPhSa.bound}
D(Z_N) \le C (\log N)^{2/3} N^{-1/3}.
\end{equation}
The numerical experiments in \cite{LuPhSa1986} indicate a convergence rate of $\mathcal{O}(N^{-1/2})$.

In this paper we give explicit constructions of point sets $Z_N$ for
which we have
\begin{equation*}
D(Z_N) \le 44 \sqrt{2} N^{-1/2}.
\end{equation*}
Our numerical results indicate a convergence rate of $\mathcal{O}((\log N)^c N^{-3/4})$ for some $1/2 \le c \le 1$, see Tables~\ref{table2}, \ref{table1} below.

The {\em spherical cap $\mathbb{L}_2$-discrepancy}
\begin{equation*} 
D_{\IL_{2}}(Z_{N}) \DEF \left\{ \int_{-1}^{1} \int_{\Sp^{2}} \left| \frac{1}{N}\sum_{n=0}^{N-1} 1_{C(\bsw,t)}(\bsz_n) - \sigma(C(\bsz,t))\right|^{2} \dd \sigma(\bsw) \, \dd{t} \right\}^{1/2},
\end{equation*}
which averages the local discrepancy for a spherical cap over all caps, provides a lower bound for the spherical cap discrepancy. It is closely related to the {\em sum of distances} and its continuous counterpart the {\em distance integral} by means of {\em Stolarsky's invariance principle~\cite{St1973}} for the Euclidean distance and the $2$-sphere,
\begin{equation*} 
\frac{1}{N^2} \sum_{j = 1}^N \sum_{k = 1}^N \left| \bsz_j - \bsz_k \right| + 4 \left[ D_{\IL_{2}}^{C}(\bsz_1, \dots, \bsz_N) \right]^2 = \int_{\mathbb{S}^2} \int_{\mathbb{S}^2} \left| \bsz - \bsw \right| \dd \sigma(\bsz) \, \dd \sigma(\bsw) \FED V_{-1}( \mathbb{S}^2 ) = \frac{4}{3}.
\end{equation*}
This gives a simple way of computing the spherical cap $\mathbb{L}_2$-discrepancy of point sets on $\mathbb{S}^2$. In \cite{BrWo20xx} it is shown that the spherical cap $\mathbb{L}_2$-discrepancy of $Z_N$ can be interpreted as the worst-case error of an equal weight numerical integration rule with node set $Z_N$ for functions in the unit ball of a certain Sobolev space over $\mathbb{S}^2$. It is shown in \cite{BrSaSlWo20xx} that on average (that is for randomly chosen points independently identically uniformly distributed over the sphere), the expected squared worst-case error is of the form $(4/3) N^{-1}$. Thus the expected value of the squared spherical cap discrepancy satisfies
\begin{equation} \label{exp}
8 \mathbb{E} \left[ D(Z_N) \right]^2 \geq 4 \mathbb{E} \left[ D_{\IL_{2}}(Z_{N}) \right]^2 = \frac{4}{3} \, N^{-1}.
\end{equation}
We study the expected value and the {\em typical} asymptotic order of the spherical cap discrepancy of random point sets in detail in Section \ref{sec:rand}. Among other results, we show that the there is also a constant $C > 0$ such that $\mathbb{E}\left[ D(Z_N)\right] \le C N^{-1/2}$. \\

Point configurations maximising the sum of distances, by Stolarsky's invariance principle, have low spherical cap $\IL_2$-discrepancy. It is known from \cite{Be1984} that low spherical cap $\IL_2$-discrepancy point sets satisfy relations similar to \eqref{eq:opt.D.C} except for the logarithmic term introduced by the probabilistic approach. The upper bound for the spherical cap discrepancy of maximum sum of distances points $\hat{Z}_N^*$ obtained in \cite{NaSuWa2010} is much weaker but still better than \eqref{eq:LuPhSa.bound}: For some positive constant $c > 0$, not depending on $N$,
\begin{equation*}
D( Z_N^* ) \leq c N^{-3/8}.
\end{equation*}

For point configurations $Z_N^*$ emulating electrons restricted to move on $\mathbb{S}^2$ in the most stable equilibrium, that is minimising their {\em Coulomb potential energy} essentially given by
\begin{equation*}
\mathop{\sum_{j=1}^N \sum_{k=1}^N}_{j \neq k} \frac{1}{\left| \bsz_j - \bsz_k \right|},
\end{equation*}
one can show the bound 
\begin{equation*}
D( Z_N^* ) \leq C N^{-1/2} \log N.
\end{equation*}
The estimate $D( Z_N^* ) = \mathcal{O}(N^{-1/2})$ was conjectured by Korevaar~\cite{Ko1996} and later proved (up to the logarithmic factor) by G{\"o}tz~\cite{Go2000}. When allowing so-called {\em $K$-regular} test sets \footnote{Roughly speaking, $\sigma$-measurable subsets $B$ of $\mathbb{S}^2$ whose $\delta$-neighbourhoods $\partial_\delta B$ relative to $\mathbb{S}^2$ satisfy $\sigma( \partial_\delta B ) \leq K \delta$. For example, spherical caps are $K_1$-regular for some fixed $K_1$.} introduced by Sj{\"o}gren~\cite{Sj1972}, the estimate above is sharp in the following sense: The upper bound holds for any $K$-regular test set, whereas there are some numbers $K_0$ and $c$ such that to any $N$ points $\bsz_1, \dots, \bsz_N \in \mathbb{S}^2$ there is a $K_0$-regular test set $B$ with \cite[Corollary~2]{Go2000}
\begin{equation*}
c \, K_0 \, N^{-1/2} \leq \left| \frac{1}{N} \sum_{n=1}^{N} 1_{B}(\bsz_n) - \sigma(B) \right|.
\end{equation*}
(The lower bound also applies to the explicit constructions given in this paper.) Bounds for the spherical cap discrepancy of so-called minimal Riesz energy configurations (for the concept of Riesz energy see, for example Saff and Kuijlaars~\cite{KuSa1997} and Hardin and Saff~\cite{HaSa2004}) can be found in \cite{Br2008} (for the logarithmic energy), Damelin and Grabner~\cite{DaGr2003,DaGr2004} (the first hyper-singular case), and \cite{NaSuWa2010} (sums of generalised distances). Wagner~\cite{Wa1992b} estimates the spherical cap discrepancy in terms of the Riesz energy. It should be mentioned that there are very few known explicit constructions of point configurations with optimal Riesz energy. In general, one has to rely on numerical optimisation to generate such point sets. The underlying  (constrained) optimisation problem is highly non-linear. Moreover, numerical results indicate that the number of local minima increases exponentially with the number of points. (For the computational complexity see, for example, Bendito et al.~\cite{BeCa2009}.)

{\em Spherical $n$-designs} introduced by Delsarte, Goethals and Seidel in the landmark paper \cite{DeGoSei1977} are node sets for equal weight numerical integration rules such that all spherical polynomials of degree $\leq n$ are integrated exactly. Grabner and Tichy~\cite{GrTi1993} give the following upper bound of the spherical cap discrepancy of a spherical $n$-design with $N(n)$ points
\begin{equation} \label{eq:discr.bound.sph.design}
D( Z_{N(n)}^* ) \leq C n^{-1}
\end{equation}
which immediately follows from the aforementioned {\em Erd{\"o}s-Tur{\'a}n type inequality}. (See also Andrievskii, Blatt, and G{\"o}tz~\cite{AnBlGo1999} for a similar form for $K$-regular test sets.)

A spherical $n$-design is the solution of a system of polynomial equations (one for every spherical harmonic of the real orthonormal basis of the space of spherical polynomials of degree $\leq n$). Hence, a natural lower bound for the number of points of a spherical $n$-design is given by the dimension of the involved polynomial space; that is, one needs at least $\geq n^2 / 4$ points. The famous conjecture that $C \, n^2$ points (for some universal $C > 0$) are sufficient for a spherical $n$-design seems to have been settled by Bondarenko, Radchenko and Viazovska~\cite{BoRaVi2011arXiv}. The proposed proof is non-constructive. Hardin and Sloane~\cite{HaSl1996} propose a construction of so-called putative spherical $n$-designs with $(1/2) \, n^2 + o(n^2)$ points. The variational characterisation of spherical designs introduced in \cite{SlWo2009} (also cf. \cite{GrTi1993}) leads to a minimisation problem for a certain energy functional (changing with $n$) whose minimiser is a spherical $n$-design if and only if the functional becomes zero. Numerical results also suggest a coefficient $1/2$. When allowing more points, $N(n) = (n + 1)^2$, interval-based methods yield, in principle, the existence of a spherical $n$-design near so-called extremal (maximum determinant points, cf. \cite{SlWo2004}). Due to the computational cost this approach was carried out only for $n \leq 20$. Very recently, Chen, Frommer, and Lang~\cite{ChFrLa2011} devised a computational algorithm based on interval arithmetic that, upon successful completion, verifies the existence of a spherical $n$-design with $(n + 1)^2$ points and provides narrow interval enclosures which are known to contain these nodes with mathematical certainty. The spherical cap discrepancy of all such obtained spherical $n$-design with $\mathcal{O}(n^2)$ points can then be bounded by $C^\prime \, N^{-1/2}$ by \eqref{eq:discr.bound.sph.design}. For the sake of completeness it should be mentioned that the tensor product rules used by Korevaar and Meyers~\cite{KoMe1993} to prove the existence of spherical $n$-designs of $N(n) = \mathcal{O}(n^3)$ points give rise to $N(n)$-point configurations whose spherical cap discrepancy can be bounded by $C^{\prime\prime} \, [N(n)]^{-1/3}$ by \eqref{eq:discr.bound.sph.design}.

%

From \cite{BrWo20xx} it follows that the spherical cap discrepancy
of a point set $Z_N = \{\bsz_0,\ldots, \bsz_{N-1}\} \subseteq
\mathbb{S}^2$ yields an upper bound on the integration error in
certain Sobolev spaces of functions defined on $\mathbb{S}^2$ using
a quadrature rule $Q_N(f) = \frac{1}{N} \sum_{n=0}^{N-1} f(\bsz_n)$.
Thus, our results here provide an explicit mean of finding
quadrature points for numerical integration of functions defined on
$\mathbb{S}^2$. Our result here improves the bound on the
integration error in \cite{BD2011} by a factor of $\sqrt{\log N}$. 

The construction of the points on $\mathbb{S}^2$ is obtained by
mapping low-discrepancy points on $[0,1]^2$ to $\mathbb{S}^2$ using
an equal area transformation $\bsPhi:[0,1]^2 \to \mathbb{S}^2$. The
same approach has previously been used in \cite{BD2011} and \cite{HN2004}, in both cases in the context of numerical integration.  The low-discrepancy points in $[0,1]^2$ are obtained
from digital nets and Fibonacci lattices, see \cite{DiPi2010,
Ni1992}. These point sets are well-distributed with respect to
rectangles anchored at the origin $(0,0)$. However, the set
$$\bsPhi^{-1}(C(\bsw,t)) = \{\bsx \in [0,1]^2: \bsPhi(\bsx) \in
C(\bsw,t)\}$$ is, in general, not a rectangle. In fact, it is not
even a convex set, although the boundary
\begin{equation*}
\bsPhi^{-1}(\partial C(\bsw,t)) = \{\bsx \in [0,1]^2: \bsPhi(\bsx) \in \partial C(\bsw,t)\}
\end{equation*}
is a continuous curve.

Hence, in order to prove bounds on the spherical
cap discrepancy of digital nets and Fibonacci lattices lifted to the
sphere using $\bsPhi$ (we call those point sets spherical digital nets
and spherical Fibonacci lattices), we need to prove bounds on a
general notion of discrepancy in $[0,1]^2$. To this end we study discrepancy in $[0,1]^2$ with respect to  convex sets, the corresponding discrepancy is known as {\it isotropic discrepancy} \cite{BeCh1987}. We show that digital nets and Fibonacci lattices have isotropic discrepancy of order $\mathcal{O}(N^{-1/2})$.  Using these result and some properties of
the function $\bsPhi$, we can show that spherical digital nets and
spherical Fibonacci lattices have spherical cap discrepancy at most
$C N^{-1/2}$ for an explicitly given constant $C$, see Corollary~\ref{cor1} and \ref{cor2}. Note that the best possible rate of convergence of the isotropic discrepancy is $N^{-2/3} (\log N)^c$ for some $0\le c \le 4$, see \cite{Be1988} and \cite[p.107]{BeCh1987}. Hence the approach via the isotropic discrepancy cannot give the optimal rate of convergence for the spherical cap discrepancy.

In the following we define the equal area Lambert map
$\bsPhi$ and show some of its properties.

\section{The equal-area Lambert transform and some properties}

The points on the sphere are obtained by using the {\em Lambert cylindrical equal-area projection} 
\begin{equation} \label{eq:Lambert.map}
\bsPhi(\alpha,\tau) = \left(2 \sqrt{\tau- \tau^2} \, \cos (2\pi \alpha), 2 \sqrt{\tau- \tau^2} \, \sin (2\pi \alpha), 1-2 \tau \right), \qquad \alpha,\tau \in [0,1].
\end{equation}

The area-preserving Lambert map can be illustrated in the following way. The unit square $[0,1]^2$ is linearly stretched to the rectangle $[0,2 \pi] \times [-1,1]$, rolled into a cylinder of radius $1$ and height $2$ and fitted around the unit sphere such that the polar axis is the main $z$-axis. This way a point $(\alpha, \tau)$ in $[0,1]^2$ is mapped to a point on the cylinder which is radially projected along a ray orthogonal to the polar axis onto the sphere giving the point $\bsPhi(\alpha,\tau)$.

Axis-parallel rectangles in the unit square are mapped to spherical ``rectangles'' of equal area, see Figure~\ref{fig:rectangles}.

\begin{figure}[ht]
\begin{center} 
\includegraphics[scale=.08155]{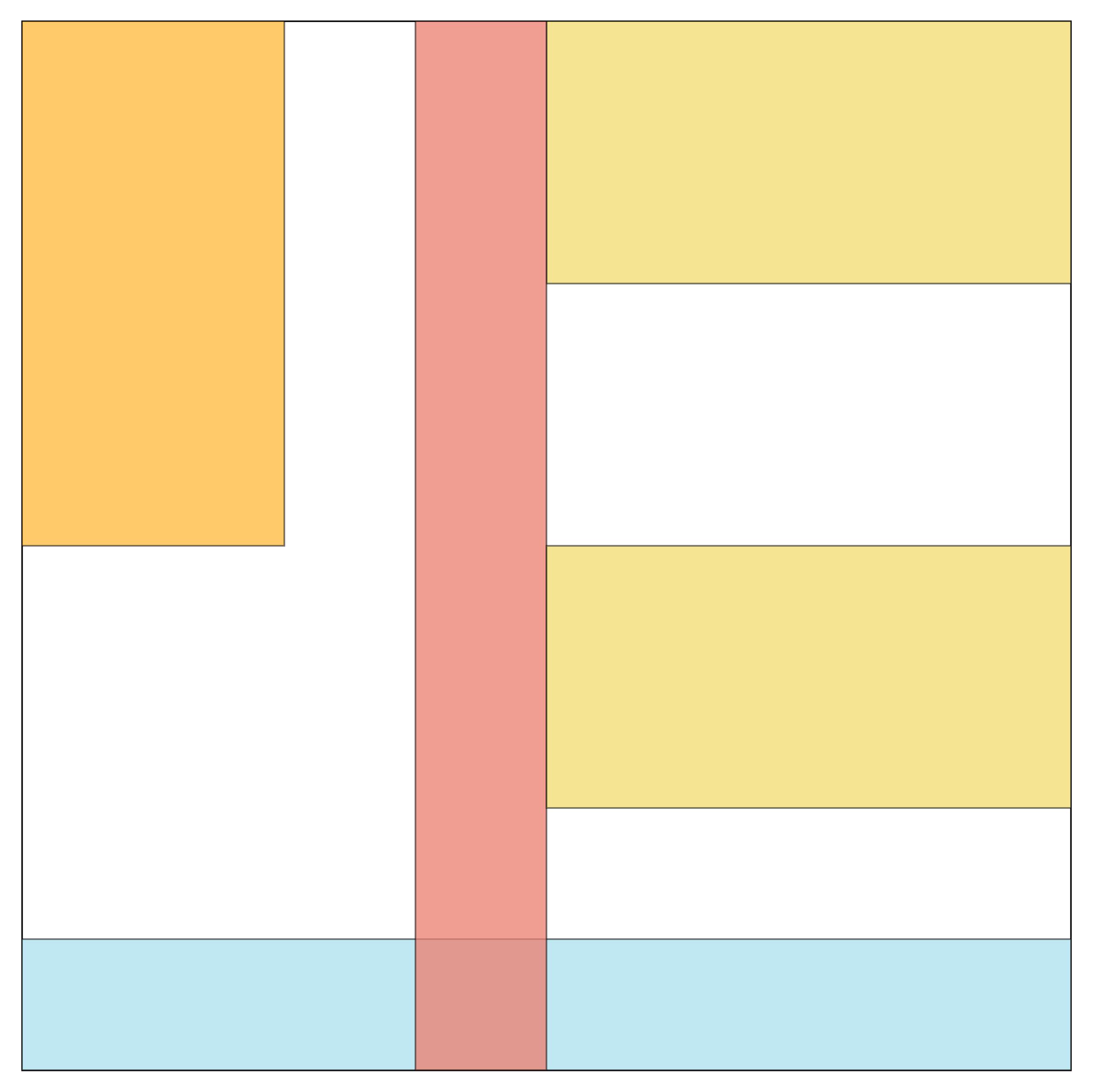}
\includegraphics[scale=.075]{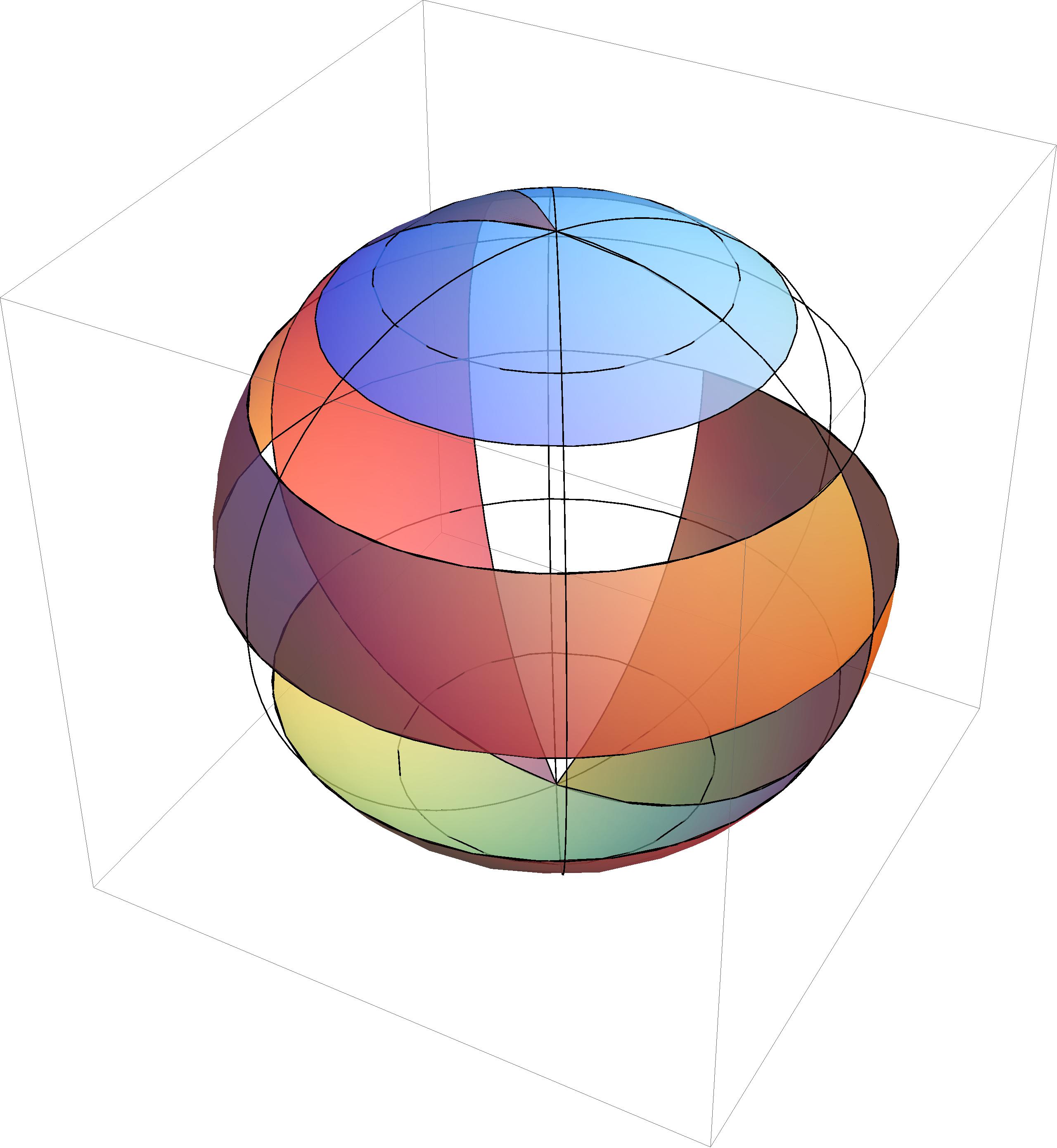}
\caption{\label{fig:rectangles} Axis-parallel rectangles in the square and their images under $\bsPhi$ on $\mathbb{S}^2$.}
\end{center}
\end{figure}

The pre-images of a spherical cap centred at $\bsw$ with height $t$ under the Lambert map is the set 
\begin{equation*}
B(\bsw,t) = \bsPhi^{-1}(C(\bsw,t)) = \left\{ (\alpha, \tau) \in [0,1) \times [0,1] : \bsPhi(\alpha,\tau) \in C(\bsw,t) \right\}
\end{equation*}
and the pre-image of the boundary of this spherical cap is
\begin{equation*}
\partial B(\bsw,t) = \bsPhi^{-1}(\partial C(\bsw,t)) = \left\{ (\alpha, \tau) \in [0,1) \times [0,1] : \bsPhi(\alpha,\tau) \in \partial C(\bsw,t) \right\}.
\end{equation*}

The sets $B(\bsw,t)$ are not convex, in general. Thus, we consider a more general class of sets which we call pseudo-convex. A definition is given in the following.

\begin{defn} \label{def:pseudo-convex}
Let $A$ be an open subset of $[0,1]^2$ such that there exists a collection of $p$ convex subsets $A_1$, \dots, $A_p$ of $[0,1]^2$ with the following properties: 
\begin{inparaenum}[(\itshape a\upshape)]
\item $A_j \cap A_k$ is empty for $j \neq k$;
\item $A \subseteq A_1 \cup \cdots \cup A_p$; 
\item either $A_j$ is a convex part of $A$ ($A_j \subseteq A$) or the complement of $A$ with respect to $A_j$, $A_j^\prime = A_j \setminus A$, is convex.
\end{inparaenum}
Then $A$ is called a {\em pseudo-convex} set and $A_1, \dots, A_p$ is an admissible convex covering for $A$ with $p$ parts (with $q$ convex parts of $A$).
\end{defn}

\begin{lem} \label{lem2}
For every $\bsw \in \mathbb{S}^2$ and all $-1 \leq t \leq 1$ the pre-image $B(\bsw,t)$ of the spherical cap $C(\bsw,t)$ centred at $\bsw$ with height $t$ under the Lambert map is pseudo-convex with an admissible convex covering with at most $7$ parts. More precisely, taking into account the number of convex parts of the pre-image, among the convex coverings with $p$ parts and $q$ of which are convex the worst case has $p = 7$ and $q = 3$ which implies the constant $2p - q = 11$.
\end{lem}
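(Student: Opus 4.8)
## Proof Proposal

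The plan is to analyze the pre-image $B(\bsw,t)$ directly by computing the level curve $\partial B(\bsw,t)$ in coordinates $(\alpha,\tau) \in [0,1)\times[0,1]$, then decompose $[0,1]^2$ into at most seven convex pieces adapted to this curve. Writing $\bsw = (\sin\theta_0\cos\phi_0, \sin\theta_0\sin\phi_0, \cos\theta_0)$ and using $\bsPhi(\alpha,\tau) = (2\sqrt{\tau-\tau^2}\cos 2\pi\alpha,\, 2\sqrt{\tau-\tau^2}\sin 2\pi\alpha,\, 1-2\tau)$, the condition $\bsw\cdot\bsPhi(\alpha,\tau) = t$ becomes
\begin{equation*}
2\sqrt{\tau-\tau^2}\,\sin\theta_0\,\cos(2\pi\alpha - \phi_0) + (1-2\tau)\cos\theta_0 = t.
\end{equation*}
First I would treat the degenerate cases: if $\bsw = \pm(0,0,1)$ is the pole, the cap is a horizontal strip $\{\tau < c\}$ or $\{\tau > c\}$, which is already convex ($p=1$). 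So assume $\sin\theta_0 > 0$. After the (area- and convexity-preserving) horizontal shift $\alpha \mapsto \alpha - \phi_0/(2\pi)$ modulo $1$, the curve is $\cos(2\pi\alpha) = g(\tau)$ where $g(\tau) := \bigl(t - (1-2\tau)\cos\theta_0\bigr)/\bigl(2\sqrt{\tau-\tau^2}\,\sin\theta_0\bigr)$; note the shift can split the set into two convex-coverable pieces across the seam $\alpha=0$, and I would account for this seam carefully (this is one source of the extra parts).

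The core analytic step is a monotonicity/convexity analysis of $g$ and hence of the branches $\alpha = \tfrac{1}{2\pi}\arccos g(\tau)$ and $\alpha = 1 - \tfrac{1}{2\pi}\arccos g(\tau)$. One checks that $g$ is unimodal on $(0,1)$ (a single interior critical point, coming from $g'(\tau)=0$ reducing to a linear equation in $\tau$ after clearing the square root), so the set $\{\tau : |g(\tau)| \le 1\}$ where the level curve is nonempty is a single subinterval $[\tau_-,\tau_+] \subseteq [0,1]$. On that interval the level curve consists of at most two monotone arcs (the two $\arccos$ branches), each of which I claim is the graph of a convex-or-concave function of $\tau$ after possibly one further subdivision at the inflection of $\arccos g$; equivalently, the region $B(\bsw,t)$ is bounded left/right by such arcs and top/bottom by horizontal segments $\tau=\tau_\pm$ and/or the square's edges. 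Cutting $[0,1]^2$ with the horizontal lines $\tau=\tau_-$, $\tau=\tau_+$, the vertical line through the extremum of $g$, and the seam $\alpha=0$, and grouping, yields the claimed covering: the two horizontal slabs outside $[\tau_-,\tau_+]$ are each either entirely inside or entirely outside $B$ (convex complement), and the middle slab splits into a bounded number of convex cells whose intersection with $B$ or whose complement in the cell is convex. Bookkeeping the various topological types of the level curve (cap missing the equatorial band, straddling it, or the band contained in the cap; curve meeting $\alpha=0$ zero, one, or two times) gives the worst case $p=7$, $q=3$, hence $2p-q=11$.

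The main obstacle I anticipate is \textbf{not} the existence of a finite convex covering — that follows fairly softly from unimodality of $g$ and the monotonicity of $\arccos$ — but pinning down the \emph{exact} worst-case count $p=7$, $q=3$. This requires an exhaustive but careful case distinction over the mutual position of $\bsw$, the height $t$, and the equatorial band, together with how the shifted level curve interacts with the identified boundary $\alpha=0$ of the fundamental domain $[0,1)\times[0,1]$; each topological configuration must be shown to admit a covering with the stated parameters, and at least one configuration must be exhibited where no covering does better. A secondary technical point is handling the non-smooth behavior of $\sqrt{\tau-\tau^2}$ at $\tau=0,1$ (the poles), where the arcs become vertical; there one uses that near a pole $B(\bsw,t)$ locally looks like a full horizontal strip or its complement, which is convex, so no extra parts are needed from the endpoints beyond those already counted.
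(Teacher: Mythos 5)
Your overall strategy (reduce to the curve $\cos(2\pi\alpha)=g(\tau)$ with $g(\tau)=\bigl(t-(1-2\tau)\cos\theta_0\bigr)/\bigl(2\sqrt{\tau-\tau^2}\sin\theta_0\bigr)$, then cut $[0,1]^2$ by horizontal lines at $\tau_\pm$, a vertical symmetry cut, and the seam) is in the same spirit as the paper, and your preliminary observations (unimodality of $g$ since $g'=0$ reduces to $2t\tau=t-\cos\theta_0$; $\{\tau:|g(\tau)|\le 1\}$ an interval; poles and the seam handled separately) are correct. But there is a genuine gap at the one step that carries all the difficulty: you claim each $\arccos$-branch of the level curve is convex-or-concave ``after possibly one further subdivision at the inflection of $\arccos g$,'' i.e.\ at most one inflection per branch. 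You give no argument for this, and it is not true in general. Writing the vanishing of the curvature along a level curve as a polynomial condition, one gets (as in the paper, equation \eqref{eq:kapp.along.curve}) a polynomial of degree $4$ in $\tau$, so a priori up to four curvature zeros per branch; the paper's Proposition~\ref{prop:intersections} shows that for $1-2v<t<1$ (and symmetrically $-1<t<-(1-2v)$) there can be \emph{two} sign changes of the curvature along each half of the level curve, and it is exactly this case that forces the trapezoid construction and the worst case $p=7$, $q=3$ in the paper's proof of Lemma~\ref{lem2}. Establishing that ``at most two'' (rather than four) is correct is the hard analytic content of the paper --- the cubic $Q(x)=x^3+px+q$ in $x=\cos(2\pi(u-\alpha))$, its discriminant, the Sturm-chain sign analysis, and the monotonicity/extremum analysis of the zero-curvature curves at $\tau_v$, $1-\tau_v$. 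Your proposal replaces all of this by an unproven (and, with the bound ``one,'' false) assertion, so the decomposition you describe is not justified and, as described, would not cover the worst configuration.

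A second, related gap: the quantitative conclusion $p=7$, $q=3$, $2p-q=11$ is the actual content of the lemma (it is the constant used in Theorem~\ref{theoremDJ}), and you defer it entirely to ``bookkeeping the various topological types,'' which is never carried out; without the curvature sign analysis above you do not even have the correct list of topological types to enumerate. A minor point in your favour: you do not need to exhibit a configuration ``where no covering does better'' --- the lemma only asserts an admissible covering with these parameters (an upper bound), so that part of your announced programme is unnecessary.
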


The proof of Lemma~\ref{lem2} in Section~\ref{sec:proofs} gives details how to construct admissible coverings.


\section{Isotropic- and spherical cap discrepancy}

We introduce the isotropic discrepancy of a point set and a sequence as follows. Let $\lambda$ be the Lebesgue area measure in the unit square.

\begin{defn}
The {\em isotropic discrepancy} $J_N$ of an $N$-point set $P_N= \{ \bsx_0, \ldots, \bsx_{N-1} \}$ in $[0,1]^2$ is defined as
\begin{equation*}
J_N(P_N) = \sup_{A \in \mathcal{A}} \left| \frac{1}{N} \sum_{n=0}^{N-1} 1_{A}(\bsx_n) - \lambda(A) \right|,
\end{equation*}
where $\mathcal{A}$ is the family of all convex subsets of $[0,1]^{2}$.

For an infinite sequence $\bsx_0,\bsx_1,\ldots \in [0,1]^2$ the isotropic discrepancy is defined as the isotropic discrepancy of the initial $N$ points of the sequence.
\end{defn}

\begin{lem} \label{lem:pseudo.convex}
Let $A$ be a pseudo-convex subset of $[0,1]^2$ with an admissible convex covering of $p$ parts with $q$ convex parts of $A$. Then for any $N$-point set $P_N = \{\bsx_0,\ldots, \bsx_{N-1}\} \subseteq [0,1]^2$
\begin{equation*}
\left| \frac{1}{N} \sum_{n=0}^{N-1} 1_A( \bsx_n ) - \lambda(A) \right| \leq \left( 2 p - q \right) J_N( P_N ).
\end{equation*}
\end{lem}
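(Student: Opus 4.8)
The plan is to estimate the local discrepancy of $A$ by writing the indicator $1_A$ as a signed sum of indicators of convex sets, each of which can be controlled by the isotropic discrepancy $J_N(P_N)$. Let $A_1,\dots,A_p$ be an admissible convex covering of $A$, with $A_1,\dots,A_q$ the convex parts contained in $A$ (so $A_j\subseteq A$ for $j\le q$), and for $j>q$ write $A_j'=A_j\setminus A$, which is convex by property (c). Since the $A_j$ are pairwise disjoint and $A\subseteq A_1\cup\cdots\cup A_p$, we have the pointwise identity
\begin{equation*}
1_A \;=\; \sum_{j=1}^{p} 1_{A\cap A_j} \;=\; \sum_{j=1}^{q} 1_{A_j} \;+\; \sum_{j=q+1}^{p}\bigl( 1_{A_j} - 1_{A_j'}\bigr),
\end{equation*}
where for $j>q$ we used $A\cap A_j = A_j\setminus A_j'$ together with $A_j'\subseteq A_j$. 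Integrating against $\lambda$ and against the empirical measure $\frac1N\sum_n\delta_{\bsx_n}$ and subtracting, the left-hand side of the claimed inequality becomes a sum of $q$ terms of the form $\bigl|\frac1N\sum_n 1_{A_j}(\bsx_n)-\lambda(A_j)\bigr|$ and $p-q$ pairs of such terms (one for $A_j$, one for $A_j'$).

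Now each of $A_j$ (for all $j$) and each $A_j'$ (for $j>q$) is a convex subset of $[0,1]^2$, so the discrepancy of each of these sets is at most $J_N(P_N)$ by the definition of isotropic discrepancy. Applying the triangle inequality to the displayed decomposition of $1_A$ then yields
\begin{equation*}
\left| \frac{1}{N} \sum_{n=0}^{N-1} 1_A( \bsx_n ) - \lambda(A) \right| \;\leq\; q\, J_N(P_N) \;+\; (p-q)\bigl( J_N(P_N) + J_N(P_N)\bigr) \;=\; (2p-q)\, J_N(P_N),
\end{equation*}
which is exactly the assertion.

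The only genuinely delicate point — and the step I would write out carefully — is the justification of the pointwise identity for $1_A$, i.e. that properties (a), (b), (c) in Definition \ref{def:pseudo-convex} really do force $A\cap A_j$ to equal $A_j$ when $A_j\subseteq A$ and to equal $A_j\setminus A_j'$ when $A_j'=A_j\setminus A$ is the convex complement. This is a set-theoretic bookkeeping: disjointness (a) ensures no overcounting across different $j$, covering (b) ensures $A$ is exhausted, and (c) ensures each piece $A\cap A_j$ is expressible using at most two convex sets whose discrepancies are individually bounded. A minor subtlety is that $A$ is open while the $A_j$ need not be, and that $\lambda(\partial A_j)=0$ for convex $A_j$, so replacing closed/open versions of the convex sets does not affect either $\lambda(A_j)$ or (up to a $\lambda$-null set that is irrelevant here) the argument; but since $J_N$ is defined as a supremum over all convex sets, boundary issues can in fact be ignored entirely. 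No optimization over the covering is needed: the bound $2p-q$ holds for the given admissible covering, and one simply invokes Lemma \ref{lem2} afterwards (in the spherical application) to get $2p-q\le 11$.
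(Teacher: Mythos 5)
Your proposal is correct and follows essentially the same route as the paper: decompose $A$ via the disjoint covering into the convex pieces $A_j$ ($j\le q$) and the differences $A_j\setminus A_j'$ ($j>q$), write the local discrepancy of $A$ as the corresponding signed sum of discrepancies of convex sets, and bound each of the $q+2(p-q)=2p-q$ terms by $J_N(P_N)$. The only cosmetic difference is that you phrase the decomposition as a pointwise identity of indicator functions, whereas the paper writes it directly at the level of the set decomposition and the resulting sums.
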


\begin{proof}
Let $A_1, \dots, A_p$ be an admissible convex covering of $A$ with $p$ parts. Without loss of generality let $A_1, \dots, A_q$ be the convex parts of $A$ and $A_{q+1}, \dots, A_p$ those for which $A_j^\prime = A_j \setminus A$ ($q + 1 \leq j \leq p$) is convex. Clearly
\begin{equation*}
A = \bigcup_{j = 1}^q A_j \cup \bigcup_{j = q + 1}^p \left( A_j \setminus A_j^\prime \right).
\end{equation*}
Thus
\begin{align*}
&\frac{1}{N} \sum_{n=0}^{N-1} 1_A( \bsx_n ) - \lambda(A) 
\\
&\phantom{equals}= \sum_{j=1}^q \Big[ \frac{1}{N} \sum_{n=0}^{N-1} 1_{A_j}( \bsx_n ) - \lambda(A_j) \Big] + \sum_{j = q + 1}^p \Big[ \frac{1}{N} \sum_{n=0}^{N-1} 1_{A_j \setminus A_j^\prime}( \bsx_n ) - \lambda(A_j \setminus A_j^\prime) \Big] \\
&\phantom{equals}= \sum_{j=1}^q \Big[ \frac{1}{N} \sum_{n=0}^{N-1} 1_{A_j}( \bsx_n ) - \lambda(A_j) \Big] + \sum_{j = q + 1}^p \Big[ \frac{1}{N} \sum_{n=0}^{N-1} 1_{A_j}( \bsx_n ) - \lambda(A_j) \Big] \\
&\phantom{equals\pm}- \sum_{j = q + 1}^p \Big[ \frac{1}{N} \sum_{n=0}^{N-1} 1_{A_j^\prime}( \bsx_n ) - \lambda(A_j^\prime) \Big].
\end{align*}
In the last line all sets are convex and we can use the isotropic discrepancy in the estimation
\begin{equation*}
\left| \frac{1}{N} \sum_{n=0}^{N-1} 1_A( \bsx_n ) - \lambda(A) \right| \leq \left[ q + \left( p - q \right) + \left( p - q \right) \right] J_N(\bsx_0, \dots, \bsx_{N-1}).
\end{equation*}
\end{proof}

\begin{thm} \label{theoremDJ}
Let $P_N = \{\bsx_0,\ldots, \bsx_{N-1}\} \subseteq [0,1]^2$ and let $Z_N = \{\bsPhi(\bsx_0),\ldots, \bsPhi(\bsx_{N-1})\} \subseteq \mathbb{S}^2$. Then
\begin{equation*}
D(Z_N) \leq 11 J_N(P_N).
\end{equation*}
\end{thm}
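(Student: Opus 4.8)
The plan is to carry the spherical cap discrepancy of $Z_N$ back to the square by exploiting that $\bsPhi$ is area-preserving, so that a cap $C(\bsw,t)$ and its Lambert pre-image $B(\bsw,t)$ carry the same measure and the same point count, and then to invoke Lemma~\ref{lem2} and Lemma~\ref{lem:pseudo.convex} to estimate the resulting local discrepancy of the (pseudo-convex) set $B(\bsw,t)$ by the isotropic discrepancy $J_N(P_N)$.

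First I would fix $\bsw \in \mathbb{S}^2$ and $t \in [-1,1]$ and record two facts. Since $\bsPhi$ pushes the Lebesgue measure $\lambda$ on $[0,1]^2$ forward to the normalised surface measure $\sigma$ on $\mathbb{S}^2$, one has $\sigma(C(\bsw,t)) = \lambda(\bsPhi^{-1}(C(\bsw,t))) = \lambda(B(\bsw,t))$. Moreover, by definition of the pre-image, $\bsz_n = \bsPhi(\bsx_n)$ lies in $C(\bsw,t)$ exactly when $\bsx_n$ lies in $B(\bsw,t)$, so
\begin{equation*}
\frac{1}{N}\sum_{n=0}^{N-1} 1_{C(\bsw,t)}(\bsz_n) = \frac{1}{N}\sum_{n=0}^{N-1} 1_{B(\bsw,t)}(\bsx_n).
\end{equation*}
Combining the two, for this fixed $(\bsw,t)$ the local cap discrepancy of $Z_N$ coincides with the local discrepancy of $B(\bsw,t)$ against the square point set $P_N$:
\begin{equation*}
\left| \frac{1}{N}\sum_{n=0}^{N-1} 1_{C(\bsw,t)}(\bsz_n) - \sigma(C(\bsw,t)) \right| = \left| \frac{1}{N}\sum_{n=0}^{N-1} 1_{B(\bsw,t)}(\bsx_n) - \lambda(B(\bsw,t)) \right|.
\end{equation*}

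Next I would apply the two preparatory lemmas. By Lemma~\ref{lem2}, $B(\bsw,t)$ is pseudo-convex and admits an admissible convex covering whose parameters $p$ (number of parts) and $q$ (number of convex parts of $B(\bsw,t)$) satisfy $2p-q \le 11$, the extremal case being $p=7$, $q=3$. Plugging this covering into Lemma~\ref{lem:pseudo.convex} bounds the right-hand side of the last display by $(2p-q)\,J_N(P_N) \le 11\,J_N(P_N)$. This estimate is uniform in $\bsw$ and $t$, so taking the supremum over $\bsw \in \mathbb{S}^2$ and $t \in [-1,1]$ in the definition of $D(Z_N)$ gives $D(Z_N) \le 11\,J_N(P_N)$. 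The degenerate heights $t=1$ (with $B(\bsw,t)=\emptyset$) and $t=-1$ (with $B(\bsw,t)$ the full square, up to the seam) are trivial, both sides vanishing.

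I do not expect any real obstacle in this argument itself: all of the geometric difficulty has been packaged into Lemma~\ref{lem2}, namely the analysis of the level curves $\partial B(\bsw,t)$ of the Lambert pre-images and the proof that at most seven convex pieces (three of them lying inside $B(\bsw,t)$) always suffice — that is the step whose proof genuinely requires work. The only point needing a word of care here is the identification of the seam $\{\alpha=0\}$ with $\{\alpha=1\}$ under $\bsPhi$: one should either assume $P_N \subseteq [0,1)\times[0,1]$ or reduce every $\alpha$-coordinate modulo $1$, so that the pointwise identity $1_{C(\bsw,t)}(\bsz_n) = 1_{B(\bsw,t)}(\bsx_n)$ holds for every $n$.
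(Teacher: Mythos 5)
Your proposal is correct and follows essentially the same route as the paper's own proof: transfer the local cap discrepancy to the square via the point-count identity and the area-preserving property of $\bsPhi$, then invoke Lemma~\ref{lem2} and Lemma~\ref{lem:pseudo.convex} with the constant $2p-q=11$ and take the supremum. Your extra remarks on the seam $\{\alpha=0\}\sim\{\alpha=1\}$ and the degenerate heights $t=\pm1$ are harmless refinements the paper leaves implicit.
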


\begin{proof}
Let $\bsw \in \mathbb{S}^2$ and $-1 \leq t \leq 1$. A point
$\bsPhi(\bsx_n) \in C(\bsw,t)$ if and only if $\bsx_n \in B(\bsw,t)$.
Thus
\begin{equation*}
\sum_{n=0}^{N-1} 1_{C(\bsw,t)}(\bsPhi(\bsx_n)) = \sum_{n=0}^{N-1} 1_{B(\bsw,t)}(\bsx_n).
\end{equation*}
Further, since the transformation $\bsPhi$ preserves areas, we have
\begin{equation*}
\sigma(C(\bsw,t)) = \lambda(B(\bsw,t)).
\end{equation*}
Hence
\begin{equation*}
\left|\frac{1}{N} \sum_{n=0}^{N-1} 1_{C(\bsw,t)}(\bsPhi(\bsx_n)) - \sigma(C(\bsw,t)) \right|  = \left| \frac{1}{N} \sum_{n=0}^{N-1} 1_{B(\bsw,t)}(\bsx_n) - \lambda(B(\bsw,t)) \right|.
\end{equation*}
The pre-images are pseudo-convex in the sense of Definition~\ref{def:pseudo-convex} by Lemma~\ref{lem2}. Applying Lemma~\ref{lem:pseudo.convex} with the constant $2p - q = 11$ from Lemma~\ref{def:pseudo-convex} we arrive at the result.
\end{proof}

We have now reduced the problem of proving bounds on the spherical cap discrepancy to prove bounds on the isotropic discrepancy of points in the square $[0,1]^2$. We will study this problem in Section \ref{sec:iso}.

\section{Spherical cap discrepancy of random points sets} 
\label{sec:rand}

Let $(M, \mathcal{M})$ be a measurable space, and let $P$ be a probability on it. Let further $X_n$, $n \geq 0$, denote a sequence of independent, identically distributed (i.i.d.) random variables on a probability space $(\Omega, \mathcal{A}, \IP)$ with values in $M$, and let $\mathcal{C} \subseteq \mathcal{M}$ denote a class of subsets of $M$. To avoid measurability problems we will assume throughout the rest of this section that the class $\mathcal{C}$ is countable. Let $A \subseteq M$ be an arbitrary set. Then $\mathcal{C}$ is said to {\em shatter} $A$ if to every possible subset $B$ of $A$ there exists a set $C \in \mathcal{C}$ such that
\begin{equation*}
C \cap A = B.
\end{equation*}
For $k \geq 1$ the $k$-th {\em shattering coefficient} $S_{\mathcal{C}}(k)$ of $\mathcal{C}$ is defined as
\begin{equation*}
S_{\mathcal{C}}(k) \DEF \max_{x_1, \dots, x_k \in M} \mathrm{card} \{ \{x_1, \dots, x_k\} \cap C : C \in \mathcal{C} \}.
\end{equation*}
The {\em Vapnik-\v{C}ervonenkis dimension} (VC-dimension) of $\mathcal{C}$ is defined as
\begin{equation*}
v (\mathcal{C}) \DEF \min_k \{ k : S_{\mathcal{C}} < 2^k \}.
\end{equation*}
(Here we use the convention that the minimum of the empty set is $\infty$.) A class $\mathcal{C}$ with finite VC-dimension is called a {\em Vapnik-\v{C}ervonenkis class} (VC class). The theory of VC classes is of extraordinary importance in the theory of empirical processes indexed by classes of functions. For example, a class $\mathcal{C}$ is {\em uniformly Glivenko-Cantelli} if and only if it is a VC class, see \cite{vc}. We will use the following theorem, which is a combination of results of Talagrand \cite[Theorem 6.6]{tala} and Haussler \cite[Corollary 1]{hauss}, and has already been used by Heinrich {\em et al.} \cite{hein} in the context of probabilistic discrepancy theory. 
\begin{thm}[{see \cite[Theorem 2]{hein}}] \label{thmtala}
There exists a positive number $K$ such that for each VC class $\mathcal{C}$ and each probability $P$ and sequence $X_n$, $n \geq 0$, as above the following holds: For all $s \geq K \sqrt{v (\mathcal{C})}$ we have
\begin{equation*}
\IP \left\{ \sup_{C \in \mathcal{C}} \left| \frac{1}{N} \sum_{n=0}^{N-1} 1_C (X_n) - P(C)\right| \geq \frac{s}{\sqrt{N}} \right\} \leq \frac{1}{s} \left(\frac{K s^2}{v(\mathcal{C})} \right)^{v(\mathcal{C})} e^{-2s^2}.
\end{equation*}
\end{thm}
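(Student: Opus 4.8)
The statement is quoted from \cite{hein}, where it is assembled from Talagrand's concentration inequality \cite[Theorem~6.6]{tala} and Haussler's metric--entropy bound \cite[Corollary~1]{hauss}, so I sketch the structure of such a proof rather than claim originality. The plan is: symmetrize, replace the VC hypothesis by a dimension-free covering-number estimate, run a chaining argument to pin the mean of the supremum at size $\lesssim \sqrt{v(\mathcal{C})/N}$, and then use concentration to turn the mean bound into the claimed exponential tail.

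First I would symmetrize. Put $F_N(C) = \frac1N\sum_{n=0}^{N-1}\bigl(1_C(X_n) - P(C)\bigr)$. The symmetrization inequality bounds $\IP\{\sup_C|F_N(C)| \ge u\}$ by a constant times the same probability for the Rademacher process $C \mapsto \frac1N\sum_{n=0}^{N-1}\varepsilon_n 1_C(X_n)$, where the $\varepsilon_n$ are i.i.d.\ signs independent of the $X_n$; one then works conditionally on $X_0,\dots,X_{N-1}$, writing $P_N$ for the empirical measure. The essential input is Haussler's bound: the $L_2(P_N)$--covering numbers of $\{1_C : C\in\mathcal{C}\}$ are at most $c_1(v)\,\delta^{-2v}$ for $\delta\in(0,1]$, with $v=v(\mathcal{C})$ and, crucially, with no dependence on $N$. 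Plugging this into Dudley's entropy integral bounds the conditional expectation of the Rademacher supremum by $c_2\sqrt{v/N}$, and the chaining tail estimates of \cite[Theorem~6.6]{tala} give a sub-Gaussian deviation of this supremum about its conditional mean.

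To pass from the symmetrized statement back to $F_N$ and to sharpen the tail to the precise form $\tfrac1s\bigl(Ks^2/v\bigr)^{v}e^{-2s^2}$, the cleanest device is a single net plus peeling: fix the scale $\delta\asymp\sqrt v/s$, so that Haussler's bound produces an $L_2(P_N)$--net of cardinality at most $(Ks^2/v)^v$ while, by the chaining estimate and the hypothesis $s\ge K\sqrt{v(\mathcal C)}$, the total mass of the increments at scales below $\delta$ is at most $\tfrac12\,s/\sqrt N$. One then applies Hoeffding's inequality to each net element---here the fact that the indicators take values in $[0,1]$ is exactly what yields the exponent $2s^2$, via $\IP\{|\tfrac1N\sum(1_C(X_n)-P(C))|\ge u\}\le 2e^{-2Nu^2}$ with $u=\tfrac12 s/\sqrt N$---takes a union bound over the net, and absorbs the within-cell fluctuation using the chaining tail bound. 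The residual factor $1/s$ comes from estimating the tail of the maximum by integrating the Gaussian tail across scales rather than crudely (Mills' ratio $\int_s^\infty e^{-2u^2}\,du\asymp e^{-2s^2}/s$); alternatively one cites Talagrand's concentration inequality for the bounded empirical process $F_N$ directly, whose weak variance $\sup_C P(C)(1-P(C))\le 1$ controls the sub-Gaussian regime.

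I expect the main obstacle to be calibrating the finest net scale so that the Hoeffding exponent is not degraded below $2s^2$ while the union-bound prefactor is kept to the polynomial $(Ks^2/v)^v$, and simultaneously arranging that everything coarser is swept into the hypothesis $s\ge K\sqrt{v(\mathcal C)}$ --- which is also what makes $(Ks^2/v)^v\ge 1$, so the bound is not vacuous. This is precisely the delicate book-keeping performed in \cite{tala} and \cite{hauss} and combined in \cite{hein}; for the purposes of this paper the theorem is used as a black box.
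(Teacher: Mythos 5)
The paper gives no proof of this statement: it is imported verbatim from \cite[Theorem 2]{hein}, itself a combination of Talagrand's \cite[Theorem 6.6]{tala} and Haussler's \cite[Corollary 1]{hauss}, and is used purely as a black box --- which is exactly how you treat it, so your approach matches the paper's. Your sketch of how such a bound is assembled (symmetrization, Haussler's dimension-free covering estimate, chaining, concentration) is a fair outline of the literature, though, as you yourself flag, the naive net-plus-Hoeffding split at level $s/2$ would degrade the exponent below $2s^2$, so the sketch should not be mistaken for a self-contained proof.
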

In our setting we will have $M=\mathbb{S}^2$, $\mathcal{M}$ will denote the sigma-field generated by the class of spherical caps, $P$ will stand for the normalised Lebesgue surface area measure $\sigma$, and $\mathcal{C}$ will denote the class of all spherical caps for which the centre $\boldsymbol{w}$ is a vector of rational numbers and the height $t$ is also a rational number (this restriction is necessary to assure that the class $\mathcal{C}$ is countable; of course the spherical cap discrepancy with respect to this class is the same as the discrepancy with respect to the class of {\em all} spherical caps). In the sequel we assume that the i.i.d. random variables $X_n$, $n \geq 0$, are uniformly distributed on $\mathbb{S}^2$. We will write $Z_N=Z_N(\omega)$ for the (random) point set $\{ X_0, \dots, X_{N-1} \} = \{ X_0 (\omega), \dots, X_{N-1}(\omega) \}$.

The following proposition asserts that the class $\mathcal{C}$ is a VC class (the proof of this and the subsequent results of this section can be found in Section \ref{sec:proofs}).
\begin{prop} \label{propo}
The class $\mathcal{C}$ has VC dimension 5.
\end{prop}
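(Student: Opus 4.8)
The plan is to establish the two inequalities implicit in the claim: the VC dimension is $\geq 5$ (there exist $5$ points on $\mathbb{S}^2$ shattered by spherical caps) and the VC dimension is $\leq 5$ (no $6$ points can be shattered). The second part is the substantive one and I would attack it first. A spherical cap $C(\bsw,t) = \{\bsy : \bsw\cdot\bsy > t\}$ is exactly the intersection of $\mathbb{S}^2$ with an open half-space in $\mathbb{R}^3$ of the form $\{\bsy : \bsw\cdot\bsy > t\}$; note however that not every half-space arises, since $\bsw$ is forced to be a unit vector while $t$ ranges over $[-1,1]$. Still, for the \emph{upper} bound it is harmless to enlarge the class: spherical caps form a subclass of $\{H \cap \mathbb{S}^2 : H \text{ an open half-space in } \mathbb{R}^3\}$, and it suffices to bound the VC dimension of the larger class. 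The classical fact is that half-spaces in $\mathbb{R}^d$ have VC dimension $d+1$, so half-spaces in $\mathbb{R}^3$ have VC dimension $4$ — but intersecting with the sphere is \emph{not} the same as lifting; the right comparison is with the "lifting trick". Here is the clean route: the map $\bsy \mapsto (\bsy, 1)\in\mathbb{R}^4$ (or better, since $\|\bsy\|=1$ we can use $\bsy \mapsto (\bsy, \|\bsy\|^2) = (\bsy,1)$, which is degenerate) shows a cap corresponds to a half-space condition, but to get dimension $5$ one should instead observe that on $\mathbb{S}^2$ the functions $1, y_1, y_2, y_3$ span a $4$-dimensional space of functions whose sign patterns cut out caps, and the general principle (Dudley) says that the collection of positivity sets of a $k$-dimensional vector space of functions has VC dimension $\le k$; here $k = 4$ would give $4$, not $5$. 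So the bound $\leq 5$ must come from a slightly larger function space: indeed a cap boundary $\bsw\cdot\bsy = t$ restricted to $\mathbb{S}^2$ is the zero set of $\bsw\cdot\bsy - t$, and there is no extra room. I therefore expect the correct argument for $v(\mathcal C)\le 5$ to use that caps are a subclass of the sets cut out of $\mathbb S^2$ by balls in $\mathbb R^3$ (a cap $\bsw\cdot\bsy>t$ equals $\{\|\bsy-\bsw\|^2 < 2-2t\}\cap\mathbb{S}^2$), and balls in $\mathbb R^3$ have VC dimension $4$; combined with the sphere constraint one squeezes out $5$. Making this count airtight — pinning down exactly why $6$ points cannot be shattered, accounting for the codimension-one constraint $\|\bsy\|=1$ — is the step I expect to be the main obstacle, and I would handle it by the Dudley/linear-algebra argument: the indicator $1_{C(\bsw,t)}(\bsy)=1$ iff $\bsw\cdot\bsy - t>0$, and $\{(\bsy,1):\bsy\in\mathbb S^2\}\subset\mathbb R^4$ together with the span of $\{y_1,y_2,y_3,1\}$ gives a vector space of dimension $4$ of affine functions, whose positivity regions have VC dimension $\le 4$ in general position — but on the sphere the relation $y_1^2+y_2^2+y_3^2=1$ lets one point's membership be "borrowed," raising the count by exactly one to $5$. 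I would prove the sharp statement by exhibiting the forbidden alternating pattern on $6$ points via a Radon-type partition argument adapted to the sphere.

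For the \emph{lower} bound $v(\mathcal C)\geq 5$ I would exhibit an explicit configuration of $5$ points on $\mathbb{S}^2$ that is shattered by spherical caps. A natural candidate: take the "north pole" together with $4$ points placed near a common small circle but in sufficiently general position (for instance, four points forming a small square near the equator plus one point near the north pole, or the vertices of a suitably perturbed configuration). One then checks that each of the $2^5 = 32$ subsets can be realized as the intersection of the $5$-point set with some open cap. Since membership in a cap is governed by the sign of $\bsw\cdot\bsy - t$, this reduces to a linear feasibility problem for each target subset: given which points are "in" and which are "out", find $\bsw\in\mathbb S^2$ and $t$ separating them. By perturbing, one can assume the $5$ points are in "general position" on the sphere (no $4$ lying on a common circle, i.e. no $4$ coplanar), and then a direct verification — or an appeal to a known result that $d+2$ points in general position on $\mathbb S^{d-1}\subset\mathbb{R}^d$ can be shattered by caps when $d=3$, giving $5=d+2$ — closes this direction. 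I would include the explicit coordinates of a shattered $5$-point set and a short table indicating, for a few representative "hard" subsets, the separating cap, leaving the remaining cases to symmetry and an easy continuity/perturbation remark.

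Finally I would assemble the two bounds: $v(\mathcal C)\ge 5$ from the shattered $5$-set, and $v(\mathcal C)\le 5$ (equivalently $S_{\mathcal C}(6) < 2^6$, i.e. no $6$-point set is shattered) from the half-space/ball comparison plus the sphere constraint, to conclude $v(\mathcal C) = 5$ exactly. The one genuine subtlety to be careful about in the write-up is that the class $\mathcal{C}$ used in the paper is the \emph{countable} subclass with rational centres and heights; but shattering only requires, for each target pattern, \emph{one} cap realizing it, and since the feasible region of $(\bsw,t)$ for a given pattern is open (strict inequalities) and nonempty when the pattern is realizable at all, it contains a rational point after renormalizing $\bsw$ — so restricting to rational data costs nothing, exactly as the paper already notes. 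I expect the ball-comparison counting argument for the upper bound to be where almost all the real work lies; everything else is bookkeeping.
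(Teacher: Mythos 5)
There is a genuine gap, and it starts with the definition. The paper defines $v(\mathcal{C}) \DEF \min_k \{ k : S_{\mathcal{C}}(k) < 2^k \}$, i.e.\ the \emph{smallest} $k$ such that no $k$-point set is shattered; this exceeds the usual ``largest shattered set'' convention by exactly one. So ``VC dimension $5$'' here means: some $4$-point set is shattered, and no $5$-point set is shattered. Your proposal, which works with the standard convention, sets out to exhibit five points of $\mathbb{S}^2$ shattered by caps and to rule out six. The first of these tasks is impossible: a cap is the trace on $\mathbb{S}^2$ of an (affine, open) halfspace of $\mathbb{R}^3$, and by Radon's theorem any $5 = d+2$ points of $\mathbb{R}^3$ admit a partition into two disjoint subsets whose convex hulls intersect; since a halfspace and its complement are both convex, no halfspace---hence no cap---can contain one part while excluding the other, so no $5$-point set on the sphere is shattered. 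This is precisely the paper's upper-bound argument. Your heuristic that the constraint $\|\bsy\|=1$ lets one ``borrow'' a point and raise the count from $4$ to $5$ is backwards: passing to traces on a subset can never increase the VC dimension, so Dudley's bound of $4$ for positivity sets of the $4$-dimensional space spanned by $1,y_1,y_2,y_3$ (equivalently, the halfspace or ball comparison you mention) already caps the standard VC dimension of $\mathcal{C}$ at $4$, and the ``known result'' you invoke, that $d+2$ points in general position on $\mathbb{S}^{d-1}$ can be shattered by caps, is false for the same Radon reason. Your planned upper bound (no $6$ points shattered) is true but is not the statement needed under the paper's convention.

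What the paper actually does is short: Radon's theorem on $5$ points in $\mathbb{R}^3$ shows $S_{\mathcal{C}}(5) < 2^5$, and the four vertices of a regular tetrahedron inscribed in $\mathbb{S}^2$ are shattered by caps, so $S_{\mathcal{C}}(4) = 2^4$; together these give $v(\mathcal{C}) = 5$ in the paper's sense. Two smaller points from your write-up survive intact and are worth keeping: the observation that restricting to rational centres and heights costs nothing, because the feasible parameter region for any realizable membership pattern is open and nonempty (the paper makes the same remark when introducing $\mathcal{C}$), and the reduction of shattering to a linear separation problem for $(\bsw,t)$, which is exactly how one verifies the tetrahedron case. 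But as written, the core of your plan proves the wrong statement in one direction and attempts an impossible construction in the other.
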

Using Theorem \ref{thmtala} and Proposition \ref{propo} we can prove the following results:

\begin{thm} \label{tha1}
There exist constants $C_1, C_2$ such that for $N \geq 1$
\begin{equation*}
C_1 \sqrt{N} \leq \E \left[ D(Z_N)\right] \leq C_2 \sqrt{N}.
\end{equation*}
\end{thm}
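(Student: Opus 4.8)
The plan is to prove the two inequalities separately, since they draw on completely different tools already prepared in this section. The lower bound $C_1\sqrt N \le \E[D(Z_N)]$ will be read off from the comparison between the spherical cap discrepancy and the spherical cap $\IL_2$-discrepancy recorded in \eqref{exp}, whereas the upper bound $\E[D(Z_N)] \le C_2\sqrt N$ will follow by feeding the VC dimension from Proposition \ref{propo} into the Talagrand--Haussler tail estimate of Theorem \ref{thmtala} and integrating out the deviation.

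For the lower bound I would start from \eqref{exp}, which after dividing by $8$ gives $\E[D(Z_N)]^2 \ge \tfrac16\,N^{-1}$; this holds because $D_{\IL_2}(Z_N) \le D(Z_N)$ pointwise and the expected squared $\IL_2$-discrepancy of i.i.d.\ uniform points on $\Sp^2$ equals $\tfrac13 N^{-1}$ exactly. Extracting a square root then produces the lower bound $C_1\sqrt N \le \E[D(Z_N)]$ asserted in the theorem, with the constant $C_1$ descending directly from the value $4/3$. This half of the argument is essentially immediate once \eqref{exp} is granted and requires no new estimates.

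For the upper bound I would use the layer-cake identity
\[
\E[D(Z_N)] = \int_0^\infty \IP\{D(Z_N) > u\}\,\dd u,
\]
split the range of integration at $u_0 = K\sqrt{v(\mathcal C)}\,N^{-1/2} = K\sqrt5\,N^{-1/2}$, bound the integrand trivially by $1$ on $[0,u_0]$, and on $(u_0,\infty)$ substitute $s = u\sqrt N$ so that $s \ge K\sqrt5 = K\sqrt{v(\mathcal C)}$ and Theorem \ref{thmtala} applies with $P = \sigma$ and $v(\mathcal C) = 5$ from Proposition \ref{propo}. The piece over $[0,u_0]$ contributes $u_0$, and the tail contributes $N^{-1/2}\int_{K\sqrt5}^\infty s^{-1}\bigl(\tfrac{K}{5}s^2\bigr)^{5} e^{-2s^2}\,\dd s$, a finite constant since the Gaussian factor overwhelms the polynomial. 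Both pieces are thus $O(N^{-1/2})$, which in particular yields $\E[D(Z_N)] \le C_2\sqrt N$ for all $N \ge 1$.

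The main obstacle is the upper bound: one must check that the hypothesis $s \ge K\sqrt{v(\mathcal C)}$ of Theorem \ref{thmtala} is met on the entire tail (this is exactly what the choice of splitting point $u_0$ guarantees), and then verify that the resulting integral of the mildly growing polynomial-times-Gaussian factor converges to a constant independent of $N$. Tracking this constant, together with the constant inherited from \eqref{exp}, is what pins down admissible values of $C_1$ and $C_2$. By contrast the lower bound needs nothing beyond \eqref{exp} and a square root.
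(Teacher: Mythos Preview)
Your proposal is correct and mirrors the paper's proof: the lower bound is taken directly from \eqref{exp} (the paper's Remark records $C_1=6^{-1/2}$), and the upper bound comes from feeding $v(\mathcal C)=5$ (Proposition~\ref{propo}) into Theorem~\ref{thmtala} and integrating the tail. The only cosmetic difference is that you use the layer-cake integral whereas the paper discretises and sums over integer $s\ge 2K$; both yield a constant times $N^{-1/2}$.
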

\begin{rmk}
The existence of such a constant $C_1$ for the lower bound follows directly from (\ref{exp}); we can choose $C_1 = 6^{-1/2}$.
\end{rmk}

\begin{thm} \label{tha2}
For any $\varepsilon > 0$ there exist positive constants $C_3(\varepsilon), C_4(\varepsilon)$ such that for sufficiently large $N$
\begin{equation*}
\IP \left\{ C_3 \leq \sqrt{N} D(Z_N) \leq C_4 \right\} \geq 1 - \varepsilon.
\end{equation*}
\end{thm}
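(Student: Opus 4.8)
The plan is to combine the VC-theoretic tail bound of Theorem~\ref{thmtala} (applied with $\mathcal{C}$ the countable class of rational spherical caps, which by Proposition~\ref{propo} has VC dimension $5$) for the upper bound, together with the lower bound coming from Stolarsky's invariance principle (that is, from \eqref{exp} and the concentration of $D_{\IL_2}(Z_N)$). For the \emph{upper} tail, I would set $v(\mathcal{C})=5$ in Theorem~\ref{thmtala} and observe that for $s$ a large enough absolute constant (certainly $s \geq K\sqrt{5}$), the right-hand side $\frac{1}{s}(Ks^2/5)^{5} e^{-2s^2}$ is smaller than any prescribed $\varepsilon$; one simply chooses $C_4 = C_4(\varepsilon)$ large enough that this holds. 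This gives $\IP\{\sqrt{N}D(Z_N) \le C_4\} \ge 1-\varepsilon/2$ for \emph{all} $N$, not just large $N$.

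The \emph{lower} tail is the substantive half. Here I would use the $\mathbb{L}_2$-discrepancy as a lower bound: $D(Z_N) \ge D_{\IL_2}(Z_N)$ pointwise in $\omega$. By Stolarsky's invariance principle, $4[D_{\IL_2}(Z_N)]^2 = \tfrac{4}{3} - \tfrac{1}{N^2}\sum_{j,k}|\bsz_j - \bsz_k|$, so $N D(Z_N)^2$ is bounded below by a (shifted, scaled) $U$-statistic in the i.i.d.\ points $X_0,\dots,X_{N-1}$ with kernel $h(\bsx,\bsy) = \tfrac{4}{3} - |\bsx-\bsy|$, whose expectation is $\tfrac{4}{3}N^{-1}\cdot N = \tfrac43$ in the appropriate normalization. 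Concretely, writing $T_N \DEF \tfrac43 - \tfrac1{N^2}\sum_{j,k}|X_j-X_k|$ one has $\E[N T_N] \to \tfrac43$ (from \eqref{exp}, $\E[N T_N] = \tfrac43$ up to the diagonal terms, which contribute $O(1/N)$), and $\mathrm{Var}(N T_N) = O(1)$ by the standard Hoeffding variance decomposition for $U$-statistics with a bounded kernel. Chebyshev's inequality then gives $\IP\{ N T_N \le \tfrac23\} \le \varepsilon/2$ for $N$ large, hence $\IP\{\sqrt{N}D(Z_N) \ge C_3\} \ge 1-\varepsilon/2$ with $C_3 = C_3(\varepsilon) = \sqrt{2/3}\,\sqrt{\text{(something)}}$; being a bit more careful, any $C_3 < \sqrt{1/3}$ works once $N$ is large. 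Intersecting the two events yields the claim.

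The main obstacle is the lower-tail concentration of the $\mathbb{L}_2$-discrepancy: one needs a genuine variance (or better) estimate for the $U$-statistic $\sum_{j\ne k}|X_j - X_k|$, which requires identifying that the kernel $h$ is non-degenerate (its first Hoeffding projection $h_1(\bsx) = \E_{\bsy}h(\bsx,\bsy) = \tfrac43 - \int_{\Sp^2}|\bsx-\bsy|\,\dd\sigma(\bsy)$ is genuinely nonconstant, as the distance potential is not constant on the sphere) and controlling its variance. The bounded-kernel variance bound $\mathrm{Var}(\text{$U$-statistic}) = O(N^{-1})$ (for the mean-normalized version) is classical and suffices here; Chebyshev gives no rate better than $1-\varepsilon$, which is exactly what the statement asks for, so no stronger deviation inequality (e.g.\ via Theorem~\ref{thmtala} applied to the $\mathbb{L}_2$ functional, or via bounded-differences/McDiarmid, which would also apply since changing one $X_n$ changes $D_{\IL_2}$ by $O(1/N)$) is needed — though McDiarmid's inequality would in fact give exponential concentration of $D_{\IL_2}(Z_N)$ around its mean and could replace the Chebyshev step with a cleaner estimate. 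Either way, once both tails are in hand the proof is a two-line union bound, and the only care needed is to track that "sufficiently large $N$'' is required solely for the lower bound's convergence $\E[NT_N]\to\tfrac43$ and the variance going to zero.
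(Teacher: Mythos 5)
Your upper-tail argument is exactly the paper's (Theorem~\ref{thmtala} with $v(\mathcal{C})=5$ from Proposition~\ref{propo}), but your lower-tail argument contains a genuine error. The kernel $h(\bsx,\bsy)=\tfrac43-\|\bsx-\bsy\|$ is \emph{degenerate}: by rotational invariance of $\sigma$, the distance potential $\int_{\Sp^2}\|\bsx-\bsy\|\,\dd\sigma(\bsy)$ is identically equal to $4/3$ on $\Sp^2$, so the first Hoeffding projection $h_1$ is identically zero, contrary to your claim that it is ``genuinely nonconstant''. (Indeed, this constancy is precisely why $\E\bigl[4N D_{\IL_2}(Z_N)^2\bigr]=4/3$ comes out so cleanly.) For a degenerate kernel the correct normalization of the U-statistic is $N$, not $\sqrt N$, and a short computation gives
\begin{equation*}
4N D_{\IL_2}(Z_N)^2 \;=\; \frac43 + \frac{2}{N}\sum_{j<k}\Bigl(\frac43-\|X_j-X_k\|\Bigr),
\qquad
\mathrm{Var}\bigl(4N D_{\IL_2}(Z_N)^2\bigr)\;\longrightarrow\; 2\,\mathrm{Var}\bigl(\|X-Y\|\bigr)=\frac49>0 .
\end{equation*}
So the variance of $NT_N$ does \emph{not} tend to zero, and Chebyshev around the mean $4/3$ can only bound $\IP\{NT_N\le\delta\}$ by a fixed constant (at best $1/4$ as $\delta\to 0$), which cannot be made smaller than an arbitrary $\varepsilon/2$. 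McDiarmid does not rescue this either: it controls fluctuations of $D_{\IL_2}$ at scale $N^{-1/2}$, which is the same order as $\E\,D_{\IL_2}$ itself, so it cannot show $D_{\IL_2}\gtrsim N^{-1/2}$ with probability $1-\varepsilon$ for arbitrary $\varepsilon$. To make your route work you would need the limit law of the degenerate U-statistic (a Gaussian chaos) together with control of the mass its limit distribution puts near the lower endpoint of the support of $4ND_{\IL_2}^2$ --- much heavier machinery than the statement requires. (A further, harmless, slip: pointwise one only has $D_{\IL_2}(Z_N)\le\sqrt2\,D(Z_N)$, since the total measure over caps is $2$, not $1$; this only changes constants.)

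The paper's lower tail is much simpler and avoids the $\IL_2$-discrepancy entirely: fix a single hemisphere $C^*$, note that $1_{C^*}(X_n)$, $n\ge 0$, are i.i.d.\ Bernoulli$(1/2)$, and apply the ordinary central limit theorem to conclude that $\sqrt N$ times the local discrepancy at $C^*$ has a nondegenerate normal limit; hence for $C_3(\varepsilon)$ small enough and $N$ large, the probability that this local discrepancy is $\le C_3 N^{-1/2}$ is at most $\varepsilon/2$, and $D(Z_N)$ dominates it. You should replace your U-statistic step by this (or an equivalent single-cap CLT) argument; the union bound with your upper tail then goes through as you describe.
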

Theorem \ref{tha2} shows that the {\em typical} discrepancy of a random set of $N$ points is of order $N^{-1/2}$. However, actually much more is true, since by classical results any VC class $\hat{\mathcal{C}}$ on a measurable space $(\hat{M},\hat{\mathcal{M}})$ is a so-called {\em Donsker class}, which essentially means that for every probability measure $\hat{P}$ and every sequence $V_n$, $n \geq 0$, of i.i.d. random variables having law $\hat{P}$ the {\em empirical process indexed by sets} 
\begin{equation*}
\alpha_N (C) = \sqrt{N} \left| \frac{1}{N} \sum_{n=0}^{N-1} 1_{C}(V_n) - P(C) \right|, \qquad C \in \hat{\mathcal{C}}
\end{equation*}
converges weakly to a centered, bounded Gaussian process $B(C)$, which has covariance structure 
\begin{equation*}
\mathbb{E} B(C_1) B(C_2) = P (C_1 \cap C_2) - P(C_1) P(C_2), \qquad C_1, C_2 \in \hat{\mathcal{C}}. 
\end{equation*}
This weak convergence could, for example, be used to prove the existence of a limit distribution of $\sqrt{N} D(Z_N)$ as $N \to \infty$; however, to keep this presentation short and self-contained we will not pursue this method any further, and refer the interested reader to \cite{alex2, dud1, dud2, tala2} and the references therein.

\begin{rmk}
The upper bounds in Theorem \ref{tha1} and \ref{tha2} follow from Theorem \ref{thmtala}. However, since no concrete value for the constant $K$ in Theorem \ref{thmtala} is known, the value of the constants $C_2$ and $C_4$ in Theorem \ref{tha1} and Theorem \ref{tha2}, respectively, is also unknown. It is possible that the decomposition technique from \cite{aist} can be used to achieve a version of Theorem \ref{tha1} and \ref{tha2} with explicitly known constants in the upper bound.
\end{rmk}

Finally, the following theorem describes the asymptotic order of a {\em typical} infinite sequence of random points.
\begin{thm} \label{tha3}
We have 
\begin{equation*}
D (Z_N) = \mathcal{O} \left( \frac{\sqrt{\log \log N}}{\sqrt{N}} \right) \qquad \textrm{as $N \to \infty$}, \qquad \textrm{almost surely}.
\end{equation*}
\end{thm}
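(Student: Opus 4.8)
The plan is to adapt the classical proof of the law of the iterated logarithm (dyadic blocking together with the Borel--Cantelli lemma) to the empirical process indexed by spherical caps, using the concentration estimate of Theorem~\ref{thmtala} in place of an exponential bound for sums of scalars. Throughout set
\begin{equation*}
S_N(C) \DEF \sum_{n=0}^{N-1}\bigl(1_C(X_n) - \sigma(C)\bigr), \qquad \|S_N\| \DEF \sup_{C\in\mathcal{C}}|S_N(C)|,
\end{equation*}
so that $D(Z_N) = \|S_N\|/N$. By Proposition~\ref{propo} the class $\mathcal{C}$ is a VC class of dimension $5$, so Theorem~\ref{thmtala} gives, for every $N\ge1$ and every $s\ge K\sqrt5$,
\begin{equation} \label{eq:plan-tail}
\IP\bigl\{\|S_N\|\ge s\sqrt N\bigr\} \le (K/5)^5\,s^9 e^{-2s^2},
\end{equation}
whereas $\IP\{\|S_N\|\ge s\sqrt N\}\le1$ for every $s>0$.

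Put $N_k=2^k$. The core of the argument is the claim that, almost surely, there is a (random) $k_0$ such that
\begin{equation} \label{eq:plan-block}
\max_{1\le N\le N_{k+1}}\|S_N\| \le \lambda\,\sqrt{N_{k+1}\log\log N_{k+1}} \qquad\text{for all }k\ge k_0,
\end{equation}
for a fixed constant $\lambda>1$ (one may take $\lambda=2$). Granting~\eqref{eq:plan-block}, the theorem follows by interpolation: if $N_k\le N<N_{k+1}$ and $k\ge k_0$, then $D(Z_N)=\|S_N\|/N\le \lambda\sqrt{N_{k+1}\log\log N_{k+1}}/N_k=\lambda\sqrt2\,\sqrt{\log\log(2N_k)}/\sqrt{N_k}$, and since $N/2<N_k\le N$ the right-hand side is at most $2\lambda\,\sqrt{\log\log(2N)}/\sqrt N\le C\,\sqrt{\log\log N}/\sqrt N$ for $N$ large, which is the assertion of the theorem.

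To establish~\eqref{eq:plan-block} I would use an exponential maximal inequality. For fixed $C$ the sequence $N\mapsto S_N(C)$ is a martingale for the filtration $\mathcal{F}_N=\sigma(X_0,\dots,X_{N-1})$, hence $|S_N(C)|$ is a nonnegative submartingale; since $\mathcal{C}$ is countable, $\|S_N\|=\sup_{C}|S_N(C)|$ is a nonnegative, integrable (bounded by $N$) submartingale, and therefore so is $e^{\theta\|S_N\|}$ for each $\theta>0$. Doob's maximal inequality then gives, for all $\gamma,\theta>0$,
\begin{equation*}
\IP\Bigl\{\max_{1\le N\le N_{k+1}}\|S_N\|\ge\gamma\Bigr\}\le e^{-\theta\gamma}\,\E\bigl[e^{\theta\|S_{N_{k+1}}\|}\bigr].
\end{equation*}
Splitting $\E[e^{\theta\|S_N\|}]=1+\theta\int_0^\infty e^{\theta u}\IP\{\|S_N\|>u\}\,\dd u$ at $u=K\sqrt5\,\sqrt N$, using the trivial bound $1$ below and~\eqref{eq:plan-tail} above (and absorbing the resulting polynomial factor into the Gaussian one), one obtains a sub-Gaussian moment bound $\E[e^{\theta\|S_N\|}]\le C_*\,e^{\theta^2 N/4}$ valid for all $\theta\ge0$, $N\ge1$, with $C_*$ an absolute constant depending only on $K$. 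Optimising in $\theta$ (take $\theta=2\gamma/N_{k+1}$) gives $\IP\{\max_{N\le N_{k+1}}\|S_N\|\ge\gamma\}\le C_* e^{-\gamma^2/N_{k+1}}$, and the choice $\gamma=\gamma_k\DEF\lambda\sqrt{N_{k+1}\log\log N_{k+1}}$ turns this into
\begin{equation*}
\IP\Bigl\{\max_{1\le N\le N_{k+1}}\|S_N\|\ge\gamma_k\Bigr\}\le C_*\,(\log N_{k+1})^{-\lambda^2}=C_*\bigl((k+1)\log2\bigr)^{-\lambda^2},
\end{equation*}
which is summable in $k$ since $\lambda>1$. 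The Borel--Cantelli lemma yields~\eqref{eq:plan-block}, completing the proof.

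The main obstacle is the interpolation between successive dyadic levels. The naive bound $\|S_N\|\le\|S_{N_k}\|+\|S_N-S_{N_k}\|$ combined with the trivial estimate $\|S_N-S_{N_k}\|\le N_{k+1}-N_k$ only delivers $D(Z_N)\lesssim D(Z_{N_k})+1$, which is worthless; one genuinely needs control of $\max_{N_k\le N<N_{k+1}}\|S_N\|$, and it is the submartingale structure of the uniform norm $\sup_{C}|S_N(C)|$ that supplies it (alternatively one may invoke Ottaviani's inequality, using that $S_M-S_N$ is distributed as $S_{M-N}$ and that~\eqref{eq:plan-tail} bounds $\IP\{\|S_j\|>b\sqrt{N_{k+1}}\}$ uniformly over $j\le N_{k+1}$ once the constant $b$ is large enough). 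The only other point requiring care is the extraction of the sub-Gaussian moment bound from~\eqref{eq:plan-tail}, which is elementary but must be done carefully enough that the constant in the exponent of $e^{\theta^2 N/4}$ is absolute.
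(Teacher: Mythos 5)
Your argument is correct, but it is a genuinely different route from the paper's. The paper proves Theorem~\ref{tha3} in two lines: it observes that the Lambert map sends i.i.d.\ uniform points on $[0,1]^2$ to i.i.d.\ uniform points on $\mathbb{S}^2$, applies the transference estimate $D(Z_N)\le 11 J_N(P_N)$ of Theorem~\ref{theoremDJ}, and invokes Philipp's law of the iterated logarithm for the isotropic discrepancy of random points in the square, which even yields the explicit bound $\limsup_N N D(Z_N)/\sqrt{2N\log\log N}\le 11/2$ a.s. You instead work directly on the sphere: the tail bound of Theorem~\ref{thmtala} (with VC dimension $5$ from Proposition~\ref{propo}) is upgraded to a sub-Gaussian bound on $\E\bigl[e^{\theta\|S_N\|}\bigr]$, the countable supremum $\|S_N\|=\sup_{C\in\mathcal C}|S_N(C)|$ is correctly identified as a nonnegative submartingale so that Doob's inequality controls the maximum over a dyadic block, and Borel--Cantelli with $\gamma_k=\lambda\sqrt{N_{k+1}\log\log N_{k+1}}$, $\lambda>1$, gives summable probabilities $(\log N_{k+1})^{-\lambda^2}$; the interpolation between dyadic levels is then exactly the step your maximal inequality is designed for, so there is no gap there, and the extraction of the mgf bound from the tail (splitting at $u=K\sqrt5\,\sqrt N$, completing the square in $\theta u-2u^2/N$, absorbing the degree-$9$ polynomial factor) works as you indicate. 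In effect you carry out the alternative the authors only sketch in their remark (deducing the result from empirical-process machinery on VC classes rather than via Theorem~\ref{theoremDJ}): your proof is self-contained within the probabilistic section, needs neither the pseudo-convexity analysis of Lemma~\ref{lem2} nor Philipp's theorem, and generalizes verbatim to any countable VC class (e.g.\ caps on higher-dimensional spheres); the price is that, since the constant $K$ in Theorem~\ref{thmtala} is unknown, your implied constant is not explicit, whereas the paper's route gives a concrete constant and, as the authors note, is sharp up to that constant.
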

Theorem \ref{tha3} is a so-called {\em bounded law of the iterated logarithm}, and follows easily from Theorem \ref{theoremDJ} and Philipp's law of the iterated logarithm (LIL) for the isotropic discrepancy of random point sets in the plane. More precisely, Philipp \cite{philipp} proved that for a sequence of i.i.d. uniformly distributed random variables $Y_n$, $n \geq 0$, on the unit square (writing $P_N$ for the (random) point set $\{ Y_0, \dots, Y_{N-1} \}$) the law of the iterated logarithm
\begin{equation*}
\limsup_{N \to \infty} \frac{N J_N(P_N)}{\sqrt{2 N \log \log N}} = \frac{1}{2} \qquad \textup{a.s.}
\end{equation*}
holds. Together with Theorem \ref{theoremDJ} this implies for $Z_N = \{\bsPhi(Y_0),\ldots, \bsPhi(Y_{N-1})\} \subseteq \mathbb{S}^2$
\begin{equation*}
\limsup_{N \to \infty} \frac{N D(Z_N)}{\sqrt{2 N \log \log N}} \leq \frac{11}{2} \qquad \textup{a.s.},
\end{equation*}
which proves Theorem \ref{tha3} (it is necessary to observe that the image of a sequence of i.i.d. uniformly distributed random variables on the unit square under the area-preserving Lambert map is a sequence of i.i.d. uniformly distributed random variables on the sphere). It is easy to see that Theorem \ref{tha3} is optimal, except for the value of the implied constant. More precisely, let $C^*$ denote a fixed spherical cap with area $2 \pi$ (which means that $C^*$ is a hemisphere, and has normalised surface area measure $\sigma(C^*)=1/2$). Then clearly the random variables 
\begin{equation*}
1_{C^*}(\bsPhi(Y_{n})) - \sigma(C^*), \qquad n \geq 0,
\end{equation*}
have expected value 0 and variance 1/4. Thus by the classical law of the iterated logarithm for sequences of i.i.d. random variables
\begin{equation*}
\limsup_{N \to \infty} \frac{\left| \frac{1}{N}\sum_{n=0}^{N-1} 1_{C^*}(\bsPhi(Y_{n})) - 1/2 \right|}{\sqrt{2 N \log \log N}} = \frac{1}{2} \qquad \textup{a.s.},
\end{equation*}
and since 
\begin{equation*}
D(Z_N) \geq \left| \frac{1}{N}\sum_{n=0}^{N-1} 1_{C^*}(\bsPhi(Y_{n})) - \sigma(C^*) \right|, 
\end{equation*}
we finally arrive at
\begin{equation*}
\limsup_{N \to \infty} \frac{N D(Z_N)}{\sqrt{2 N \log \log N}} \geq \frac{1}{2} \qquad \textup{a.s.},
\end{equation*}
which proves the optimality of Theorem \ref{tha3}. We remark that it should also be possible to prove Theorem \ref{tha3} without using Theorem \ref{theoremDJ} and Philipp's LIL for the isotropic discrepancy, by deducing it directly from the bounded LIL for empirical processes on VC classes of Alexander and Talagrand \cite{alex}. We conjecture that Theorem \ref{tha3} can be improved to
\begin{equation*}
\limsup_{N \to \infty} \frac{N D(Z_N)}{\sqrt{2 N \log \log N}} = \frac{1}{2} \qquad \textup{a.s.},
\end{equation*}
but this seems to be very difficult to prove.

\section{Point sets with small isotropic discrepancy} \label{sec:iso}

In this section we investigate the isotropic discrepancy of $(0,m,2)$-nets and Fibonacci lattices. In particular we show that the isotropic discrepancy of those point sets converges with order $\mathcal{O}(N^{-1/2})$. Note that the best possible rate of convergence of the isotropic discrepancy is $N^{-2/3} (\log N)^c$ for some $0\le c \le 4$, see \cite{Be1988} and \cite[p.107]{BeCh1987}. Whether $(0,m,2)$-nets and/or Fibonacci lattices achieve the optimal rate of convergence for the isotropic discrepancy is an open question.

\subsection{Nets and sequences}

We give the definition of $(0,m,2)$-nets in base $b$ in the following.

\begin{defn}
Let $b \ge 2$ and $m \ge 1$ be integers. A point set
$P_{b^m} \subseteq [0,1)^2$ consisting of $b^m$ points is
called a {\em $(0,m,2)$-net in base $b$}, if for all nonnegative integers
$d_1,d_2$ with $d_1 + d_2 = m$, each of the elementary
intervals
\begin{equation*}
\prod_{i=1}^2 \left[\frac{a_i}{b^{d_i}},
\frac{a_i+1}{b^{d_i}}\right), \qquad \text{$0 \leq a_i < b^{d_i}$ ($a_i$ an integer),}
\end{equation*}
contains exactly $1$ point of $P_{b^m}$. 
\end{defn}

It is also possible to construct nested $(0,m,2)$-nets, thereby
obtaining an infinite sequence of points.

\begin{defn} \label{def:digital.sequence}
Let $b \ge 2$ be an integer. A sequence
$\bsx_0,\bsx_1,\ldots \in [0,1)^2$ is called a {\em $(0,2)$-sequence in
base $b$}, if for all $m > 0$ and for all $k \ge 0$, the point set
$\bsx_{k b^m}, \bsx_{kb^m + 1}, \ldots, \bsx_{(k+1)b^m-1}$ is a
$(0,m,2)$-net in base $b$.
\end{defn}

Explicit constructions of $(0,m,2)$-nets and $(0,2)$-sequences are due to Sobol'~\cite{So1967} and Faure~\cite{Fa1982}, see also \cite[Chapter~8]{DiPi2010}.

The following is a special case of an unpublished result due to
Gerhard Larcher. For completeness we include a proof here.
\begin{thm}\label{thm_larcher}
For the isotropic discrepancy $J_N$ of a $(0,m,2)$-net $P_N$ in base
$b$ $(N = b^{m})$ we have
\begin{equation*}
J_N(P_N) \leq 4\sqrt{2} b^{- \lfloor m/2 \rfloor} \leq \frac{4 \sqrt{2b}}{\sqrt{N}}.
\end{equation*}
\end{thm}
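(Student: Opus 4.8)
The plan is to bound the isotropic discrepancy of a $(0,m,2)$-net by exploiting the defining equidistribution property of the net with respect to dyadic (base-$b$) elementary boxes, and the fact that a convex set in $[0,1]^2$ has a boundary that is ``thin'' — it can be covered by a controlled number of such elementary boxes at a suitable resolution. First I would fix $k = \lfloor m/2 \rfloor$ and consider the partition of $[0,1)^2$ into the grid of axis-parallel boxes $\prod_{i=1}^2 [a_i b^{-k}, (a_i+1)b^{-k})$ of side length $b^{-k}$. Since $2k \le m$, each such box is a union of $b^{m-2k}$ elementary intervals of the shape appearing in the $(0,m,2)$-net definition with $d_1 = d_2 = k$; hence each box of the grid contains \emph{exactly} $b^{m-2k}$ points of $P_N$, and its normalised point count equals its area exactly. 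This is the key structural input.

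**Next**, for a convex set $A \subseteq [0,1]^2$ I would split the grid boxes into three classes: those entirely contained in $A$, those entirely contained in the complement, and the ``boundary boxes'' that meet both $A$ and $A^c$ (equivalently, meet $\partial A$). For the first two classes the local discrepancy contribution is zero by the previous paragraph. So
\begin{equation*}
\left| \frac{1}{N} \sum_{n=0}^{N-1} 1_A(\bsx_n) - \lambda(A) \right| \le \sum_{\text{boundary boxes } Q} \left| \frac{1}{N} \#\{n : \bsx_n \in Q \cap A\} - \lambda(Q \cap A) \right| \le \sum_{\text{boundary boxes } Q} \left( \frac{b^{m-2k}}{N} + b^{-2k} \right),
\end{equation*}
since each boundary box holds exactly $b^{m-2k}$ points and has area $b^{-2k}$, so its contribution is at most $b^{m-2k}/N + b^{-2k} = 2 b^{-2k}$. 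It remains to count the boundary boxes. Because $A$ is convex, its boundary is a rectifiable curve whose intersection with each of the $b^k$ vertical strips $[a_1 b^{-k}, (a_1+1)b^{-k}) \times [0,1)$ is the graph of at most two monotone functions (an upper and a lower arc of $\partial A$ over that strip), so $\partial A$ meets at most $2(b^k + 1) \le 4 b^k$ boxes in the vertical direction, and symmetrically at most $4b^k$ counting horizontally; a convexity argument gives a bound of the form $c\, b^k$ on the number of boundary boxes. Combining, $J_N(P_N) \le c\, b^k \cdot 2 b^{-2k} = 2c\, b^{-k} = 2c\, b^{-\lfloor m/2\rfloor}$, and tracking the constant carefully to get $c = 2$ yields $4\sqrt 2\, b^{-\lfloor m/2 \rfloor}$; finally $b^{-\lfloor m/2\rfloor} \le b^{-(m-1)/2} = \sqrt{b}\, b^{-m/2} = \sqrt b / \sqrt N$ gives the second inequality.

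**The main obstacle** is the sharp count of boundary boxes with the right constant. One has to argue carefully that a convex curve enters and leaves each row and each column of the grid in a controlled way — the cleanest route is: in each of the $b^k$ columns, $\partial A$ lies between a concave upper arc and a convex lower arc, each of which, being monotone in the sense of lying in a single column, occupies boxes in at most... one needs that the number of boxes met across all columns telescopes like the total vertical variation, which for a convex region is at most $2$ (one ascent, one descent of the bounding function) plus the number of columns. Getting from ``at most $b^k + \text{(variation)}$ per direction'' to a clean final constant, and making the two directions interact correctly (a box on the boundary is counted once even if it is a boundary box ``in both directions''), is where the bookkeeping must be done honestly; everything else is the direct exact-equidistribution estimate above.
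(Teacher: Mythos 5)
Your first half coincides with the paper's proof: take $k=\lfloor m/2\rfloor$, note that each grid box of side $b^{-k}$ is a union of $b^{m-2k}$ elementary intervals of order $m$ (split the second coordinate), hence is exactly fair, so all of the discrepancy of a convex set $A$ is carried by the boxes meeting $\partial A$. The genuine gap is the step you yourself flag as the ``main obstacle'': the count of boundary boxes with an explicit constant is never established, and the bookkeeping you sketch cannot produce $4\sqrt2$. With your per-box bound $2b^{-2k}$, reaching $4\sqrt2\,b^{-\lfloor m/2\rfloor}$ would require at most $2\sqrt2\,b^{k}$ boundary boxes; your suggestion that ``$c=2$'' does the job is arithmetically inconsistent (it would give $4b^{-k}$, not $4\sqrt2\,b^{-k}$) and, more importantly, false as a count: for the inscribed diamond with vertices at the midpoints of the sides (or for the inscribed disk) the boundary meets roughly $4b^{k}$ cells, so your estimate as written only yields about $8\,b^{-k}$. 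A correct sharp count is about $4b^{k}+O(1)$, obtained by splitting $\partial A$ at its four extreme points into monotone arcs and counting column/row crossings; combined with the sharper per-box bound $\bigl|\#\{n:\bsx_n\in Q\cap A\}/N-\lambda(Q\cap A)\bigr|\le b^{-2k}$ (both terms lie in $[0,b^{-2k}]$, so the factor $2$ is wasteful) this route would give roughly $4\,b^{-k}$, which suffices --- but neither the count nor this refinement is in your write-up.

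The paper avoids box counting altogether. Writing $W^{\circ}$ for the union of the grid boxes contained in $A$ and $\overline W$ for the union of those meeting $A$ or its boundary, both unions are fair, so the local discrepancy is sandwiched between $-\lambda(A\setminus W^{\circ})$ and $\lambda(\overline W\setminus A)$. Each of these two sets lies within distance $\sqrt2\,b^{-k}$ (the cell diagonal) of $\partial A$, and since a convex subset of $[0,1]^2$ has boundary length at most $4$, each has area at most $4\cdot\sqrt2\,b^{-k}$; this neighbourhood-area estimate is exactly where the constant $4\sqrt2$ comes from. So: your outline does give the order $N^{-1/2}$ once any $O(b^{k})$ boundary-box count is supplied, but as it stands it neither proves such a count nor recovers the stated constant.
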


\begin{proof}
Let $P_N = \{\bsx_0,\ldots, \bsx_{b^m-1}\}$. Let $k =\left\lfloor m / 2 \right\rfloor$ and consider a subcube  $W$ of $[0,1)^{s}$ of the form
\begin{equation*}
W = \left[ \frac{c_{1}}{b^{k}}, \frac{c_{1}+1}{b^{k}} \right) \times \left[ \frac{c_{2}}{b^{k}}, \frac{c_{2}+1}{b^{k}} \right)
\end{equation*}
with $0 \leq c_{i} < b^{k}$ ($c_i$ an integer) for $i=1,2$. The cube $W$ has
volume $b^{-2k}$ and is the union of $b^{m-2k}$ elementary intervals
of order $m$. Indeed, 
\begin{equation*}
W = \bigcup_{v=0}^{b^{m-2k}-1} \left( \left[\frac{c_{1}}{b^{k}}, \frac{c_{1}+1}{b^{k}} \right) \times \left[ \frac{c_{2}}{b^{k}} + \frac{v}{b^{m-k}},\frac{c_{2}}{b^{k}} + \frac{v+1}{b^{m-k}} \right) \right).
\end{equation*}
So $W$ contains exactly $b^{m-2k}$ points of the net.
The diagonal of $W$ has length $\sqrt{2} / b^{k}$.

Let now $A$ be an arbitrary convex subset of $[0,1]^{2}$. Let
$W^{\circ}$ denote the union of cubes $W$ fully contained in $A$ and
let $\overline{W}$ denote the union of cubes $W$ having non-empty
intersection with $A$ or its boundary. The sets $\overline{W}$ and
$W^{\circ}$ are fair with respect to the net, that is,
\begin{equation*}
\frac{1}{N} \sum_{n=0}^{N-1} 1_{\overline{W}}(\bsx_n) =
\lambda(\overline{W}) \quad \mbox{  and  } \quad
\frac{1}{N}\sum_{n=0}^{N-1} 1_{W^\circ}(\bsx_n) = \lambda(W^\circ).
\end{equation*}

We have
\begin{equation*}
\frac{1}{N} \sum_{n=0}^{N-1} 1_{A}(\bsx_n) - \lambda(A) \le
\frac{1}{N} \sum_{n=0}^{N-1} 1_{\overline{W}}(\bsx_n) -
\lambda(\overline{W}) + \lambda(\overline{W} \setminus A)  =
\lambda(\overline{W} \setminus A)
\end{equation*}
and
\begin{equation*}
\frac{1}{N} \sum_{n=0}^{N-1} 1_{A}(\bsx_n) - \lambda(A) \ge
\frac{1}{N} \sum_{n=0}^{N-1} 1_{W^\circ}(\bsx_n) - \lambda(W^\circ)
- \lambda(A \setminus W^\circ) = -\lambda(A \setminus W^\circ).
\end{equation*}

Since the set $A$ is convex, the length of the boundary of $A$ is at
most the circumference of the unit square, which is $4$. Further we have
\begin{equation*}
\overline{W} \setminus A \subseteq \{\bsx \in [0,1]^2 \setminus A:
\|\bsx-\bsy\| \leq \sqrt{2} b^{-k} \text{ for some $\bsy \in A$} \}
\end{equation*}
and therefore
\begin{equation*}
\lambda(\overline{W} \setminus A) \le \lambda(\{\bsx \in [0,1]^2
\setminus A: \|\bsx-\bsy\| \leq \sqrt{2} b^{-k} \text{ for some $\bsy \in A$} \}) \le 4 \sqrt{2} b^{-k},
\end{equation*}
where the last inequality follows from the fact that the outer boundary of the
enclosing set has length at most 4 (which is the circumference of
the square $[0,1]^2$).
%
Moreover 
\begin{equation*}
A \setminus W^\circ \subseteq \{ \bsx \in A : \|\bsx-\bsy\| \leq \sqrt{2} b^{-k} \text{ for some $\bsy \in [0,1]^2 \setminus A$} \}
\end{equation*}
and therefore
\begin{equation*}
\lambda(A \setminus W^\circ) \le \lambda( \{ \bsx \in A : \|\bsx-\bsy\| \leq \sqrt{2} b^{-k} \text{ for some $\bsy \in [0,1]^2 \setminus A$} \}) \le 4 \sqrt{2} b^{-k},
\end{equation*}
since, by the convexity of $A$, the boundary of $A$ has length at
most $4$ (which is the circumference of the square $[0,1]^2$).

Thus we obtain
\begin{equation*}
\left|\frac{1}{N} \sum_{n=0}^{N-1} 1_{A}(\bsx_n) - \lambda(A)
\right| \le 4 \sqrt{2} b^{-k}
\end{equation*}
and hence the result follows.
\end{proof}

Note that the above result only applies when the number of points $N$ is of the form $N = b^m$ (notice that choosing $m=1$ only yields a trivial result, hence one usually chooses a small base $b$ and a 'large' value of $m$). In the following we give an extension where the number of points can take on arbitrary positive integers.

\begin{thm}
For the isotropic discrepancy $J_N$ of the first $N$ points $P_N = \{\bsx_0,\ldots, \bsx_{N-1}\}$ of a $(0,2)$-sequence in base $b$ we have
\begin{equation*}
J_N(P_N) \leq  4 \sqrt{2} \left( b^2 + b^{3/2} \right) \frac{1}{\sqrt{N}}.
\end{equation*}
\end{thm}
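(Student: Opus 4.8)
The plan is to reduce the arbitrary-$N$ statement to the net case, Theorem~\ref{thm_larcher}, by writing the prefix $P_N = \{\bsx_0,\ldots,\bsx_{N-1}\}$ of a $(0,2)$-sequence as a disjoint union of blocks, each of which is a $(0,m_i,2)$-net for a suitable $m_i$, and then controlling the error contributed by each block separately. First I would fix $b$ and $N$, and let $m$ be the integer determined by $b^m \le N < b^{m+1}$, so that $b^m \le N$. Writing $N$ in base $b$ as $N = \sum_{i=0}^{m} a_i b^i$ with $0 \le a_i < b$, I would split $\{0,1,\ldots,N-1\}$ into consecutive blocks of lengths $a_m b^m$, $a_{m-1} b^{m-1}$, \dots, where each block of length $a_i b^i$ is itself subdivided into $a_i$ consecutive runs of $b^i$ indices. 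By Definition~\ref{def:digital.sequence}, each such run of $b^i$ consecutive points $\bsx_{kb^i},\ldots,\bsx_{(k+1)b^i-1}$ is a $(0,i,2)$-net in base $b$.

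Next, for a fixed convex set $A \in \mathcal{A}$, I would estimate the quantity
\begin{equation*}
\Big| \sum_{n=0}^{N-1} 1_A(\bsx_n) - N\lambda(A) \Big|
\end{equation*}
by the triangle inequality over the blocks: each run of $b^i$ points contributes, by Theorem~\ref{thm_larcher}, at most $b^i \cdot 4\sqrt{2}\, b^{-\lfloor i/2 \rfloor}$ to the counting error against $b^i \lambda(A)$. Summing $a_i$ such runs for each $i$ from $0$ to $m$ gives a total bound
\begin{equation*}
\Big| \sum_{n=0}^{N-1} 1_A(\bsx_n) - N\lambda(A) \Big| \le 4\sqrt{2} \sum_{i=0}^{m} a_i\, b^{i - \lfloor i/2 \rfloor} \le 4\sqrt{2}(b-1) \sum_{i=0}^{m} b^{\lceil i/2 \rceil}.
\end{equation*}
The geometric sum $\sum_{i=0}^m b^{\lceil i/2\rceil}$ is dominated by its top term $b^{\lceil m/2\rceil}$ up to a constant factor depending only on $b$; since $b^{m} \le N$ gives $b^{\lceil m/2 \rceil} \le \sqrt{b}\,\sqrt{b^m} \le \sqrt{b}\,\sqrt{N}$, dividing through by $N$ yields $J_N(P_N) \le 4\sqrt{2}(b^2 + b^{3/2}) N^{-1/2}$ after bookkeeping the constants — the $b^2$ and $b^{3/2}$ arising from $(b-1) \le b$ times the geometric-series constant (which is $b/(b-1)$ for the even-index subsum and similar for the odd) times the $\sqrt{b}$ from the final estimate. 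I would carry out the split of the geometric sum into even and odd indices explicitly to get exactly the stated constant.

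The main obstacle I anticipate is purely bookkeeping rather than conceptual: getting the constant to come out as exactly $4\sqrt{2}(b^2 + b^{3/2})$ requires being careful about the floor/ceiling in $\lfloor i/2\rfloor$, about whether to use $N < b^{m+1}$ or $b^m \le N$ at the right moments, and about the fact that the top block may have $a_m$ as large as $b-1$. A cleaner variant that avoids the base-$b$ digit expansion is to peel off the largest initial segment that forms a single net (of size $b^m$ with $b^m \le N < b^{m+1}$), bound its contribution by Theorem~\ref{thm_larcher}, and recurse on the remaining $N - b^m < b^{m+1} - b^m$ points; the recursion depth is $O(\log_b N)$ but the block sizes decay geometrically, so the sum of the per-block bounds $4\sqrt{2}\,b^{m_j - \lfloor m_j/2\rfloor}$ over the decreasing exponents $m_j$ is again a geometric series controlled by its leading term. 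Either route works; I would choose whichever makes the constant-chasing shortest, and I expect the constant $b^2 + b^{3/2}$ to emerge as (geometric-series factor for one parity class) $\times\, b\, +$ (the other parity class) $\times\, \sqrt{b}$, i.e. essentially $b \cdot b + b\cdot\sqrt{b}$ after crudely bounding the $b/(b-1)$-type factors by $b$.
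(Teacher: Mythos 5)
Your proposal is correct and follows essentially the same route as the paper: base-$b$ digit decomposition of the first $N$ points into consecutive $(0,k,2)$-nets, a triangle inequality for the counting error over these blocks, Theorem~\ref{thm_larcher} applied to each block, and summation of the resulting geometric series. The only difference is bookkeeping: the paper bounds $1/N\le b^{-m}$ and then uses $N<b^{m+1}$ together with $(b-1)/(\sqrt{b}-1)=\sqrt{b}+1$ to land exactly on $4\sqrt{2}\left(b^{2}+b^{3/2}\right)$, whereas your variant (keeping $1/N$ and using $b^{m}\le N$) actually yields the slightly smaller constant $4\sqrt{2}\left(b+b^{3/2}\right)$, which of course implies the stated bound.
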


\begin{proof}
Let $N \in \mathbb{N}$ have base $b$ expansion $N = N_0 + N_1 b + \cdots + N_m b^m$. Let $P_N = \{\bsx_0,\ldots, \bsx_{N-1}\}$ denote the first $N$ points of a $(0,2)$-sequence in base $b$. 
For $0 \leq k \leq m$ with $N_k > 0$ and $0 \leq \ell < N_k$ the point set
\begin{equation*}
Q_{k,\ell} = \{\bsx_{N_m b^m + \cdots + N_{k+1} b^{k+1} + \ell b^k}, \ldots, \bsx_{N_m b^m + \cdots + N_{k+1} b^{k+1} + (\ell+1) b^k -1}\}
\end{equation*}
is a $(0,k,2)$-net in base $b$ by Definition~\ref{def:digital.sequence}. Thus $P_N$ is a disjoint union of such $(0,k,2)$-nets 
\begin{equation*}
P_N = \bigcup_{\substack{0 \le k \le m \\ N_k > 0}} \bigcup_{0 \le \ell < N_k} Q_{k,\ell}.
\end{equation*}
We have the following triangle inequality for the isotropic discrepancy (which is an analogue to the triangle inequality for the star-discrepancy \cite[p. 115, Theorem~2.6]{KuNi1974})
\begin{equation*}
J_N(P_N) \leq \sum_{\substack{k = 0 \\ N_k > 0}}^m \sum_{\ell =0}^{N_k-1} \frac{b^k}{N} J_{b^k}(Q_{k,\ell}).
\end{equation*}
This inequality holds, since for a spherical cap $C$ we have
\begin{equation*}
\frac{1}{N} \sum_{n=0}^{N-1} 1_C(\bsx_n) - \lambda(C) = \sum_{\substack{k = 0 \\ N_k > 0}}^m \sum_{\ell=0}^{N_k-1} \frac{b^k}{N} \left(\frac{1}{b^k} \sum_{\bsx \in Q_{k,\ell}} 1_C(\bsx) - \lambda(C) \right).
\end{equation*}
Thus we can use Theorem~\ref{thm_larcher} to obtain
\begin{align*}
J_N(P_N) 
&\le \sum_{\substack{k = 0 \\ N_k > 0}}^m \sum_{\ell = 0}^{N_k-1} \frac{b^k}{N} 4 \sqrt{2} b^{-\lfloor k/2\rfloor} = 4 \sqrt{2} \sum_{\substack{k = 0 \\ N_k > 0}}^m \sum_{\ell = 0}^{N_k-1} \frac{b^{\lceil k/2 \rceil}}{N} = 4 \sqrt{2} \frac{\sum_{k = 0}^m N_k b^{\lceil k/2 \rceil}}{\sum_{k = 0}^m N_k b^k} \\
&\le 4 \sqrt{2} (b-1) \sum_{k=0}^m \frac{b^{\lceil k/2 \rceil}}{b^m} \le 4 \sqrt{2} \frac{b(b-1)}{\sqrt{b}-1} b^{-m/2} \le 4 \sqrt{2} \frac{b^{3/2}(b-1)}{\sqrt{b}-1} \frac{1}{\sqrt{N}}.
\end{align*}
The estimate follows from the identity $(a-1)(a+1) = a^2 - 1$.
\end{proof}

\begin{cor}\label{cor1}
{\ }
\begin{enumerate}
\item Let $P_N$ be a $(0,m,2)$-net in base $b$ and let $Z_N = \bsPhi(P_N)
\subseteq \mathbb{S}^2$. Then the spherical cap discrepancy $D(Z_N)$ is bounded by
\begin{equation*}
D(Z_N) \le 44 \sqrt{2} b^{-\lfloor m/2 \rfloor}.
\end{equation*}
\item Let $P_N$ be the first $N$ points of a $(0,2)$-sequence in base $b$ and let $Z_N = \bsPhi(P_N) \subseteq \mathbb{S}^2$. Then the spherical cap discrepancy $D(Z_N)$ is bounded by
\begin{equation*}
D(Z_N) \le 44 \sqrt{2} \left( b^2 + b^{3/2} \right) \frac{1}{\sqrt{N}} \quad \text{for all $N$.}
\end{equation*}
\end{enumerate}
\end{cor}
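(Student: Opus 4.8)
The plan is to combine Theorem~\ref{theoremDJ}, which bounds the spherical cap discrepancy $D(Z_N)$ of a lifted point set by $11\,J_N(P_N)$, with the two isotropic discrepancy estimates just proved for nets and for sequences. For part~(1), let $P_N$ be a $(0,m,2)$-net in base $b$, so that $N=b^m$. Theorem~\ref{thm_larcher} gives $J_N(P_N)\le 4\sqrt2\,b^{-\lfloor m/2\rfloor}$; feeding this into Theorem~\ref{theoremDJ} yields
\begin{equation*}
D(Z_N)\le 11\,J_N(P_N)\le 11\cdot 4\sqrt2\,b^{-\lfloor m/2\rfloor}=44\sqrt2\,b^{-\lfloor m/2\rfloor},
\end{equation*}
which is exactly the asserted bound. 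For part~(2), let $P_N$ be the first $N$ points of a $(0,2)$-sequence in base $b$; the preceding theorem gives $J_N(P_N)\le 4\sqrt2\,(b^2+b^{3/2})\,N^{-1/2}$, and again Theorem~\ref{theoremDJ} multiplies the constant by $11$, producing
\begin{equation*}
D(Z_N)\le 11\cdot 4\sqrt2\,(b^2+b^{3/2})\,N^{-1/2}=44\sqrt2\,(b^2+b^{3/2})\,N^{-1/2}
\end{equation*}
for every $N$.

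There is really no obstacle here: the corollary is a direct chaining of three previously established results, and the only "work" is tracking the constant $11=2p-q$ from Lemma~\ref{lem2} through Theorem~\ref{theoremDJ} and multiplying it by the $4\sqrt2$ from Larcher's bound to get $44\sqrt2$. The one point that deserves a sentence of care is that Theorem~\ref{theoremDJ} applies verbatim: its hypothesis is only that $P_N\subseteq[0,1]^2$ and that $Z_N=\bsPhi(P_N)$, and both $(0,m,2)$-nets and the initial segments of $(0,2)$-sequences are such point sets, so the pseudo-convexity input from Lemma~\ref{lem2} (that every pre-image $B(\bsw,t)$ has an admissible convex covering with $p=7$, $q=3$, hence $2p-q=11$) is available without modification. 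Thus the proof is simply: apply Theorem~\ref{theoremDJ}, then substitute the relevant isotropic discrepancy bound from Theorem~\ref{thm_larcher} (for part~1) or from the $(0,2)$-sequence theorem (for part~2), and simplify $11\cdot 4\sqrt2=44\sqrt2$.
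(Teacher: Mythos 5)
Your proposal is correct and is exactly the argument the paper intends: Corollary~\ref{cor1} follows by chaining Theorem~\ref{theoremDJ} (with the constant $11 = 2p-q$ from Lemma~\ref{lem2}) with the isotropic discrepancy bounds of Theorem~\ref{thm_larcher} and the corresponding $(0,2)$-sequence theorem, giving $11 \cdot 4\sqrt{2} = 44\sqrt{2}$. Nothing further is needed.
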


Note that Item \textit{(2)} improves upon Theorem~\ref{tha3} by a factor of $\sqrt{\log \log N}$ and hence, asymptotically, spherical digital sequences are better than random sequences almost surely.

\begin{table}[ht]
\begin{center}
\begin{tabular}{l|l|l|l|l|l|l|l|l}
$m$ & $6$ & $7$ & $8$ & $9$ & $10$ & $11$ & $12$ & $13$ \\ 
$N = 2^m$ & $64$ &  $128$ & $256$ & $512$ & $1024$ & $2048$ & $4096$ & $8192$ \\ \hline $\frac{\tilde{D}(Z_{N})*N^{3/4}}{\sqrt{\log N}}$ & $0.8829$ & $0.8436$ & $0.8279$ & $0.8632$ & $0.8518$ & $1.2128$ & $1.2285$ & $0.9546$ \\ $\frac{\tilde{D}(Z_{N})*N^{3/4}}{\log N}$ & $0.4329$ & $0.3829$ & $0.3515$ & $0.3456$ & $0.3235$ & $0.4392$ & $0.4259$ & $0.3180$ \\ \hline\hline
$m$ & $14$ & $15$ & $16$ & $17$ & $18$ & $19$ & $20$ & $21$ \\ 
$N=2^m$ & $16384$ &  $32768$ & $65536$ & $131072$ & $262144$ & $524288$ & $1048576$ & $2097152$ \\ \hline  $\frac{\tilde{D}(Z_{N})*N^{3/4}}{\sqrt{\log N}}$  & $0.7925$ & $0.8862$ & $1.0331$ & $0.8337$ & $0.8562$ & $0.9854$ & $1.1167$ & $1.1463$ \\ $\frac{\tilde{D}(Z_{N})*N^{3/4}}{\log N}$  & $0.2544$ & $0.2748$ & $0.3102$ & $0.2428$ & $0.2424$ & $0.2715$ & $0.2999$ & $0.3004$
\end{tabular}
\end{center}
\caption{The value $\tilde{D}(Z_N)$ denotes the maximum of the absolute values of the local discrepancies of $Z_N$ at the spherical caps centered at the spherical digital net $Z_N$ based on a two-dimensional Sobol' point set.}
\label{table2}
\end{table}

The numerical experiments shown in Table~\ref{table2} seem to suggest that the correct order of the spherical cap discrepancy of spherical digital nets is 
\begin{equation*}
\frac{(\log N)^c}{N^{3/4}} \quad \mbox{for some } 1/2 \le c \le 1.
\end{equation*}
In those experiments we calculated the spherical cap discrepancy for all spherical caps centered at a spherical digital net based on a two-dimensional Sobol' point set (this of course gives only a lower bound).

\subsection{Fibonacci lattices}

The Fibonacci numbers $F_m$ are given by $F_1=1, F_2=1$ and $F_m = F_{m-1}+F_{m-2}$ for all $m > 2$. A Fibonacci lattice is a point set of $F_m$ points in $[0,1)^2$ given by
\begin{equation*}
\boldsymbol{f}_m := \left(\frac{n}{F_m}, \left\{ \frac{n
F_{m-1}}{F_m} \right\} \right), \quad 0 \le n < F_m,
\end{equation*}
where $\{x\} = x- \lfloor x \rfloor$ denotes the fractional part for nonnegative real numbers
$x$. The set
\begin{equation*}
\mathcal{F}_m :=\{\boldsymbol{f}_0,\ldots, \boldsymbol{f}_{F_m-1}\}
\end{equation*}
is called a Fibonacci lattice point set.

The spherical Fibonacci lattice points are then given by
\begin{equation*}
\boldsymbol{z}_n = \bsPhi(\boldsymbol{f}_n), \quad 0 \le n < F_m,
\end{equation*}
and the point set
\begin{equation*}
Z_{F_m} = \{\boldsymbol{z}_0,\ldots, \boldsymbol{z}_{F_m-1}\}
\end{equation*}
is the spherical Fibonacci lattice point set.

In the following we prove a bound on the isotropic discrepancy of Fibonacci lattices, see also \cite{La85, La88, La91}.

\begin{lem}
For the isotropic discrepancy $J_{F_m}$ of a Fibonacci lattice
$\mathcal{F}_{m}$ we have
\begin{equation*}
J_{F_m}(\mathcal{F}_m) \leq \begin{cases}
4 \sqrt{ 2 / F_{m}} & \text{if $m$ is odd,} \\
4 \sqrt{ 8 / F_{m}} & \text{if $m$ is even.}
\end{cases}
\end{equation*}
\end{lem}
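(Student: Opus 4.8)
The plan is to follow the proof of Theorem~\ref{thm_larcher} almost line by line, with the grid of congruent axis-parallel squares (which is ``fair'' for a $(0,m,2)$-net) replaced by a tiling of $\mathbb{R}^2$ by translates of a suitable fundamental cell of the Fibonacci lattice. Reduced modulo $\mathbb{Z}^2$, the set $\mathcal{F}_m$ is a subgroup of the torus $\mathbb{R}^2/\mathbb{Z}^2$ of order $F_m$, so the lift $\Lambda := \mathcal{F}_m + \mathbb{Z}^2 \subseteq \mathbb{R}^2$ is a lattice of covolume $1/F_m$; any fundamental parallelogram $P$ of $\Lambda$ has area $1/F_m$ and its $\Lambda$-translates $\{\boldsymbol{\gamma}+P:\boldsymbol{\gamma}\in\Lambda\}$ tile $\mathbb{R}^2$ so that (taking $P$ half-open) every tile contains exactly one lattice point. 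Let $\delta_m$ be the diameter of $P$. For a convex $A\subseteq[0,1]^2$ one then argues exactly as in Theorem~\ref{thm_larcher}: let $A^{\circ}$ be the union of the tiles contained in $A$ and $\overline{A}$ the union of the tiles meeting $\overline{A}$; both are fair for $\mathcal{F}_m$, a tile lying entirely inside or entirely outside $A$ contributes nothing to the local discrepancy, and therefore
\begin{equation*}
-\lambda(A\setminus A^{\circ}) \;\le\; \frac{1}{F_m}\sum_{n=0}^{F_m-1} 1_A(\boldsymbol{f}_n) - \lambda(A) \;\le\; \lambda(\overline{A}\setminus A).
\end{equation*}
Every tile meeting $\partial A$ lies within the $\delta_m$-neighbourhood of $\partial A$; since $A$ is convex its boundary has length at most $4$, and the same estimate used in Theorem~\ref{thm_larcher} gives $\lambda(\overline{A}\setminus A)\le 4\delta_m$ and $\lambda(A\setminus A^{\circ})\le 4\delta_m$. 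Taking the supremum over convex $A$ yields $J_{F_m}(\mathcal{F}_m)\le 4\delta_m$, so it remains to exhibit a fundamental cell $P$ with $\delta_m\le\sqrt{2/F_m}$ when $m$ is odd and $\delta_m\le\sqrt{8/F_m}$ when $m$ is even.

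This last point is the real content of the proof. Set $k=\lfloor m/2\rfloor$, and for an index $j$ write $\widehat{\boldsymbol{f}}_{F_j}$ for the representative of $\boldsymbol{f}_{F_j}$ with both coordinates reduced modulo $\mathbb{Z}^2$ into $[-\tfrac12,\tfrac12)^2$. Using the addition formula $F_{a+b}=F_{a+1}F_b+F_aF_{b-1}$, the Cassini/Catalan identities, and the sum-of-two-squares identity $F_{2k+1}=F_k^2+F_{k+1}^2$, one evaluates $F_jF_{m-1}\bmod F_m$ and finds the following. If $m=2k+1$ is odd, then $\widehat{\boldsymbol{f}}_{F_k}=\tfrac1{F_m}(F_k,\varepsilon F_{k+1})$ and $\widehat{\boldsymbol{f}}_{F_{k+1}}=\tfrac1{F_m}(F_{k+1},-\varepsilon F_k)$ for a sign $\varepsilon=\pm1$; these two vectors are orthogonal and both of length $\sqrt{F_k^2+F_{k+1}^2}\big/F_m=F_m^{-1/2}$, so the parallelogram $P$ they span is a \emph{square} of side $F_m^{-1/2}$ and diameter $\delta_m=\sqrt{2/F_m}$, and since its area $F_m^{-1}$ equals the covolume of $\Lambda$ the two vectors form a $\mathbb{Z}$-basis of $\Lambda$. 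If $m=2k$ is even, then $\widehat{\boldsymbol{f}}_{F_{k-1}}=\tfrac1{F_m}(F_{k-1},\pm F_{k+1})$ and $\widehat{\boldsymbol{f}}_{F_k}=\tfrac1{F_m}(F_k,\pm F_k)$, and the signs work out so that the determinant of this pair is $\pm\tfrac1{F_m^2}(F_{k-1}F_k+F_kF_{k+1})=\pm\tfrac{F_kL_k}{F_m^2}=\pm\tfrac1{F_m}$, where $L_k=F_{k+1}+F_{k-1}$ and $F_kL_k=F_{2k}=F_m$; hence they again form a $\mathbb{Z}$-basis of $\Lambda$. Moreover $\lvert\widehat{\boldsymbol{f}}_{F_k}\rvert=\sqrt{2}\,F_k/F_{2k}=\sqrt{2}/L_k\le\sqrt{2/F_{2k}}$ (because $L_k^2\ge F_{2k}$, since $L_k^2=L_{2k}+2(-1)^k$ and $L_{2k}-F_{2k}=2F_{2k-1}$) and $\lvert\widehat{\boldsymbol{f}}_{F_{k-1}}\rvert^2=(F_{k-1}^2+F_{k+1}^2)/F_{2k}^2\le 2/F_{2k}$, so the diameter of $P$ is at most the sum of the two side lengths, $\delta_m\le 2\sqrt{2/F_m}=\sqrt{8/F_m}$.

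The main obstacle is precisely this second step: pinning down which lifts of $\boldsymbol{f}_{F_k},\boldsymbol{f}_{F_{k\pm1}}$ lie in $[-\tfrac12,\tfrac12)^2$, proving that the resulting pair has the asserted length(s) and determinant $\pm1/F_m$ (so that it is a \emph{basis} of $\Lambda$, not a proper sublattice), and carrying the parity case split and the signs in the congruences $F_jF_{m-1}\equiv\pm F_{m-1-j}\pmod{F_m}$ through cleanly; small values of $m$ (where a coordinate $F_j/F_m$ equals $\tfrac12$) only produce the trivial bound $\delta_m\le 1$ and can be dismissed separately. Everything else — the tiling, the inner/outer fair approximations $A^{\circ},\overline{A}$, and the collar estimate $\lambda(\overline{A}\setminus A),\lambda(A\setminus A^{\circ})\le 4\delta_m$ — is word for word the argument already given for $(0,m,2)$-nets in Theorem~\ref{thm_larcher}.
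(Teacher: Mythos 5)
Your argument is essentially the paper's own proof: the same tiling by fair unit cells of area $1/F_m$, the same inner/outer unions, and the same collar estimate using that a convex set in $[0,1]^2$ has boundary length at most $4$, so $J_{F_m}\le 4\,\delta_m$ with $\delta_m$ the cell diameter; the only differences are that you derive the short basis of the lattice directly from the congruence $F_jF_{m-1}\equiv(-1)^{j+1}F_{m-j}\pmod{F_m}$ (note the index is $m-j$, not $m-1-j$ as written in your last paragraph) instead of quoting Niederreiter--Sloan for the odd case and the determinant identity for the even case, and that you bound the cell diameter by the sum of the two side lengths rather than by $\lVert\boldsymbol{a}+\boldsymbol{b}\rVert$. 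Both routes yield the same constants $4\sqrt{2/F_m}$ (odd $m$) and $4\sqrt{8/F_m}$ (even $m$), so the proposal is correct and follows the paper's approach.
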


\begin{proof}
Consider the case of odd integers $m$ first. From
\cite[Theorem~3]{NiSl94} it follows that for $m \in \mathbb{N}$ the
Fibonacci lattice $\mathcal{F}_{2m+1}$ can be generated by the
vectors
\begin{equation*}
\boldsymbol{a}_{2m+1} = (F_m/F_{2m+1}, (-1)^{m-1} F_{m+1}/F_{2m+1}),
\quad \boldsymbol{b}_{2m+1} = (F_{m+1}/F_{2m+1}, (-1)^m
F_m/F_{2m+1}).
\end{equation*}
This means that
\begin{equation*}
\mathcal{F}_{2m+1} = \{u \boldsymbol{a}_{2m+1} + v
\boldsymbol{b}_{2m+1} : u, v \in \mathbb{Z} \} \cap [0,1)^2.
\end{equation*}
Let
\begin{equation*}
U(\bsy) = \{\bsy+\bsx \in [0,1]^2: \bsx = s \boldsymbol{a}_{2m+1} +
t \boldsymbol{b}_{2m+1}, 0 \le s, t < 1 \}.
\end{equation*}
We call $U(\boldsymbol{f}_n)$ a unit cell (belonging to the point
$\boldsymbol{f}_n$). Note that the area of a unit cell is
$1/F_{2m+1}$ and each unit cell contains exactly one point of the
lattice, see \cite{NiSl94}.

Since $\boldsymbol{a}_{2m+1} \perp \boldsymbol{b}_{2m+1}$ it follows
that the minimum distance between points of the Fibonacci lattice is
\begin{equation*}
d_{\min}(\mathcal{F}_{2m+1}) = \min\{ \|\boldsymbol{a}_{2m+1}\|,
\|\boldsymbol{b}_{2m+1}\|\} =
\frac{\sqrt{F_m^2+F_{m+1}^2}}{F_{2m+1}} = \frac{1}{\sqrt{F_{2m+1}}}.
\end{equation*}
Thus the diameter of a unit cell is $\sqrt{2/F_{2m+1}}$.

Let now $A$  be an arbitrary convex subset of $[0,1]^2$. Let
$W^\circ$ denote the union of all unit cells fully contained in $A$
and let $\overline{W}$ denote the union of all unit cells with
nonempty intersection with $A$ or its boundary.

We have
\begin{equation*}
\frac{1}{F_{2m+1}} \sum_{n=0}^{F_{2m+1}-1} 1_{A}(\boldsymbol{f}_n) -
\lambda(A) \le \frac{1}{F_{2m+1}} \sum_{n=0}^{F_{2m+1}-1}
1_{\overline{W}}(\boldsymbol{f}_n) - \lambda(\overline{W}) +
\lambda(\overline{W} \setminus A) = \lambda(\overline{W} \setminus
A)
\end{equation*}
and
\begin{equation*}
\frac{1}{F_{2m+1}} \sum_{n=0}^{F_{2m+1}-1} 1_{A}(\boldsymbol{f}_n) -
\lambda(A) \ge \frac{1}{F_{2m+1}} \sum_{n=0}^{F_{2m+1}-1}
1_{W^\circ}(\boldsymbol{f}_n) - \lambda(W^\circ) - \lambda(A
\setminus W^\circ) = - \lambda(A \setminus W^\circ).
\end{equation*}

Since the set $A$ is convex, the length of the boundary of $A$ is at
most the circumference of the unit square, which is $4$. Further we have
\begin{equation*}
\overline{W} \setminus A \subseteq \{\bsx \in [0,1]^2 \setminus A:
\|\bsx-\bsy\| \le \sqrt{2/F_{2m+1}} \text{ for some $\bsy \in A$} \}
\end{equation*}
and therefore
\begin{equation*}
\lambda(\overline{W} \setminus A) \le \lambda(\{\bsx \in [0,1]^2
\setminus A:
\|\bsx-\bsy\| \le \sqrt{2/F_{2m+1}} \text{ for some $\bsy \in A$} \}) \le 4 \sqrt{2/F_{2m+1}},
\end{equation*}
where the last inequality follows since the outer boundary of
enclosing set has length at most 4 (which is the circumference of
the square $[0,1]^2$).

On the other hand
\begin{equation*}
A \setminus W^\circ \subseteq \{\bsx \in
A: \|\bsx-\bsy\| \le \sqrt{2/F_{2m+1}} \text{ for some $\bsy \in [0,1]^2 \setminus A$} \}
\end{equation*}
and therefore
\begin{equation*}
\lambda(A \setminus W^\circ) \le \lambda(\{\bsx \in A: \|\bsx-\bsy\| \le \sqrt{2/F_{2m+1}} \text{ for some $\bsy \in [0,1]^2 \setminus A$} \}) \le 4
\sqrt{2/F_{2m+1}},
\end{equation*}
since, by the convexity of $A$, the boundary of $A$ has length at
most $4$ (which is the circumference of the square $[0,1]^2$).

Thus we obtain
\begin{equation*}
\left|\frac{1}{F_{2m+1}} \sum_{n=0}^{F_{2m+1} -1}
1_{A}(\boldsymbol{f}_n) - \lambda(A) \right| \le 4
\sqrt{\frac{2}{F_{2m+1}}}.
\end{equation*}

Now we consider even integers $n = 2m$ with $m \ge 2$. Using the
identity $F_m F_{2m-1} - F_{m-1} F_{2m} = (-1)^{m-1} F_m$, we obtain
$F_m F_{2m-1} \equiv (-1)^{m-1} F_m \pmod{F_{2m}}$. 
Consequently
\begin{equation*}
\left(\frac{k}{F_{2m}}, \left\{\frac{k
F_{2m-1}}{F_{2m}}\right\}\right) = \left(\frac{F_m}{F_{2m}},
\left\{\frac{(-1)^{m-1} F_m}{F_{2m}} \right\}\right) \qquad \text{for $k = F_m$.}
\end{equation*}
Thus the Fibonacci lattice has the equivalent generating vector
\begin{equation*}
\boldsymbol{a}_{2m} = \left(\frac{F_m}{F_{2m}}, \frac{(-1)^{m-1}
F_m}{F_{2m}}\right).
\end{equation*}
Analogously, using the equality $F_{m+1} F_{2m-1} - F_m F_{2m} =
(-1)^m F_{m-1}$, we obtain the generating vector
\begin{equation*}
\boldsymbol{b}_{2m} = \left(\frac{F_{m+1}}{F_{2m}}, \frac{(-1)^m
F_{m-1}}{F_{2m}}\right).
\end{equation*}

The area of the parallelogram spanned by $\boldsymbol{a}_{2m}$ and
$\boldsymbol{b}_{2m}$ is
\begin{equation*}
\left|\det\left(\begin{array}{rr} F_m/F_{2m} & (-1)^{m-1} F_m/F_{2m}
\\ F_{m+1}/F_{2m} & (-1)^m F_{m-1}/F_{2m} \end{array} \right)\right|
= \frac{F_m F_{m-1} + F_{m} F_{m+1}}{F_{2m}^2} = \frac{1}{F_{2m}}.
\end{equation*}
Thus the parallelogram spanned by $\boldsymbol{a}_{2m}$ and
$\boldsymbol{b}_{2m}$ does not contain any point of the Fibonacci
lattice in its interior (i.e. is a unit cell of the Fibonacci
lattice,  see \cite{Cassels59, NiSl94}). Thus we have
\begin{equation*}
\mathcal{F}_{2m} = \{u \boldsymbol{a}_{2m} + v \boldsymbol{b}_{2m} :
u,v \in \mathbb{Z}\} \cap [0,1)^2.
\end{equation*}

Let
\begin{equation*}
U(\bsy) = \{\bsy+\bsx \in [0,1]^2: \bsx = s \boldsymbol{a}_{2m+1} +
t \boldsymbol{b}_{2m+1}, 0 \le s, t < 1 \}.
\end{equation*}
We call $U(\boldsymbol{f}_n)$ a unit cell (belonging to the point
$\boldsymbol{f}_n$). Note that the area of a unit cell is
$1/F_{2m+1}$ and each unit cell contains exactly one point of the
lattice.

Now
\begin{equation*}
\|\boldsymbol{a}_{2m}\|^2 = \frac{2 F_m^2}{F_{2m}^2}
\end{equation*}
and
\begin{equation*}
\|\boldsymbol{b}_{2m}\|^2 = \frac{F_{m+1}^2 + F_{m-1}^2}{F_{2m}^2} >
\frac{F_m^2 + F_m (2F_{m-1})}{F^2_{2m}} > \frac{2F_m^2}{F^2_{2m}} =
\|\boldsymbol{a}_{2m}\|^2.
\end{equation*}
Further it can be checked that $\|\boldsymbol{a}_{2m}+
\boldsymbol{b}_{2m}\|, \|\boldsymbol{a}_{2m}-\boldsymbol{b}_{2m}\| >
\|\boldsymbol{a}_{2m}\|$. Thus the minimum distance between points of the Fibonacci lattice is
\begin{equation*}
d_{\min}(\mathcal{F}_{2m}) = \|\boldsymbol{a}_{2m}\| =
\frac{\sqrt{2}F_m}{F_{2m}}.
\end{equation*}

The diameter of a unit cell is given by $\|\boldsymbol{a}_{2m} +
\boldsymbol{b}_{2m}\|$. Using the relations $F_m = F_{m-1} +
F_{m-2}$, $F_m^2 +F_{m-1}^2 = F_{2m-1}$ and $F_{2m} = (2F_{m-1}
+F_m) F_m$ we obtain
\begin{equation*}
F_{2m}^2 \|\boldsymbol{a}_{2m} + \boldsymbol{b}_{2m}\|^2 =
(F_m+F_{m+1})^2 + (F_m-F_{m-1})^2 = 2F_{2m} + 4 F_{2m-1} + 2F_m^2 <
8 F_{2m}.
\end{equation*}
Thus the diameter of a unit cell is bounded by
\begin{equation*}
\|\boldsymbol{a}_{2m} + \boldsymbol{b}_{2m}\| \le
\sqrt{\frac{8}{F_{2m}}}.
\end{equation*}
The result now follows by using the same arguments as in the
previous case.
\end{proof}

\begin{cor}\label{cor2}
Let $\mathcal{F}_{m}$ be a Fibonacci lattice and let $Z_{F_m} =
\bsPhi(\mathcal{F}_{m}) \subseteq \mathbb{S}^2$. Then the spherical cap discrepancy $D(Z_{F_m})$ is bounded by
\begin{equation*}
D(Z_{F_m}) \leq
\begin{cases}
44 \sqrt{ 2 / F_{m}} & \text{if $m$ is odd,} \\
44 \sqrt{ 8 / F_{m}} & \text{if $m$ is even.}
\end{cases}
\end{equation*}
\end{cor}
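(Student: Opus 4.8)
The plan is to obtain this corollary by simply chaining together two results already in place: Theorem~\ref{theoremDJ}, which says that for any $N$-point set $P_N \subseteq [0,1]^2$ the image $Z_N = \bsPhi(P_N)$ on the sphere satisfies $D(Z_N) \le 11 \, J_N(P_N)$, and the immediately preceding lemma, which bounds the isotropic discrepancy of the planar Fibonacci lattice $\mathcal{F}_m$ by $4\sqrt{2/F_m}$ when $m$ is odd and by $4\sqrt{8/F_m}$ when $m$ is even.

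First I would specialise Theorem~\ref{theoremDJ} to the case $P_N = \mathcal{F}_m$ with $N = F_m$, so that $Z_N = \bsPhi(\mathcal{F}_m) = Z_{F_m}$; this gives $D(Z_{F_m}) \le 11 \, J_{F_m}(\mathcal{F}_m)$. Then I would insert the lemma's estimate for $J_{F_m}(\mathcal{F}_m)$ and use $11 \cdot 4 = 44$ to conclude $D(Z_{F_m}) \le 44\sqrt{2/F_m}$ for odd $m$ and $D(Z_{F_m}) \le 44\sqrt{8/F_m}$ for even $m$, which is exactly the claimed bound. No case distinction beyond the one already present in the lemma is needed, and nothing about the Lambert map has to be re-examined here, since its area-preserving property and the pseudo-convexity of the pre-images $B(\bsw,t)$ are already encapsulated in Theorem~\ref{theoremDJ} via Lemma~\ref{lem2} and Lemma~\ref{lem:pseudo.convex}.

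Accordingly, there is no real obstacle in this step: all the substantive work has been done upstream — the decomposition of $B(\bsw,t)$ into an admissible convex covering with worst case $p = 7$, $q = 3$ (hence $2p - q = 11$); the reduction of spherical cap discrepancy to isotropic discrepancy; and the lattice-geometric argument that estimates $J_{F_m}(\mathcal{F}_m)$ through the diameter of a unit cell of the Fibonacci lattice and the length of the boundary of a convex set. The only remark worth including is that the factor $44 = 11 \cdot 4$ is sharp only with respect to this particular chain of estimates; as noted in the introduction, the isotropic discrepancy route cannot reproduce the true $\mathcal{O}((\log N)^c N^{-3/4})$ order of the spherical cap discrepancy suggested by the numerical experiments.
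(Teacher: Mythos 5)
Your proposal is correct and is exactly the argument the paper intends: the corollary is stated without proof precisely because it follows immediately by combining Theorem~\ref{theoremDJ} (giving the factor $11$) with the preceding lemma's bounds $4\sqrt{2/F_m}$ and $4\sqrt{8/F_m}$ on $J_{F_m}(\mathcal{F}_m)$, yielding the constant $44 = 11 \cdot 4$ in both cases.
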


\begin{table}[ht]
\begin{center}
\begin{tabular}{l|l|l|l|l|l|l|l|l}
$m$ & $17$ & $18$ & $19$ & $20$ & $21$ & $22$ & $23$ & $24$ \\ 
$F_m$ & $1597$ &  $2584$ & $4181$ & $6765$ & $10946$ & $17711$ & $28657$ & $46368$ \\ \hline
$\frac{\tilde{D}(Z_{F_m})*F_m^{3/4}}{\sqrt{\log F_m}}$ & $0.6729$ & $0.6373$ & $0.6228$ & $0.6661$ & $0.6953$ & $0.6890$ & $0.7427$ & $0.6900$ \\ $\frac{\tilde{D}(Z_{F_m})*F_m^{3/4}}{\log F_m}$ & $0.2477$ & $0.2273$ & $0.2156$ & $0.2243$ & $0.2279$ & $0.2203$ & $0.2318$ & $0.2105$ \\ \hline\hline
$m$ & $25$ & $26$ & $27$ & $28$ & $29$ & $30$ & $31$ & $32$ \\ 
$F_m$ & $75025$ &  $121393$ & $196418$ & $317811$ & $514229$ & $832040$ & $1346269$ & $2178309$ \\ \hline $\frac{\tilde{D}(Z_{F_m})*F_m^{3/4}}{\sqrt{\log F_m}}$ & $0.6957$ & $0.7249$ & $0.7531$ & $0.7205$ & $0.8562$ & $0.7455$ & $0.7862$ & $0.8082$ \\ $\frac{\tilde{D}(Z_{F_m})*F_m^{3/4}}{\log F_m}$ & $0.2076$ & $0.2118$ & $0.2157$ & $0.2024$ & $0.2361$ & $0.2019$ & $0.2092$ & $0.2115$
\end{tabular}
\end{center}
\caption{The value $\tilde{D}(Z_{F_m})$ denotes the maximum of the absolute values of the local discrepancies of $Z_{F_m}$ at the spherical caps centered at the spherical Fibonacci points $Z_{F_m}$.}
\label{table1}
\end{table}

The numerical experiments shown in Table~\ref{table1} seem to suggest that the correct order of the spherical cap discrepancy of spherical Fibonacci lattice points is of order
\begin{equation*}
\frac{(\log F_m)^c}{F_m^{3/4}} \quad \mbox{for some } 1/2 \le c \le 1.
\end{equation*}
In those experiments we calculated the spherical cap discrepancy for all spherical caps centered at the spherical Fibonacci points (this of course gives only a lower bound).

\section{Level curves of the distance function and their properties}
The Euclidean distance of two points on $\mathbb{S}^2$, given by $\bsw = \bsPhi(u,v)$ and $\bsz = \bsPhi(\alpha,\tau)$, can be written as
\begin{equation*}
\left\| \bsw - \bsz \right\|^2 = 2 \left( 1 - \bsw \cdot \bsz \right) = 2 \Big[ \underbrace{1 - \left( 1 - 2v \right) \left( 1 - 2 \tau \right)}_{2 \left( v + \tau - 2 v \tau \right)} - 4 \sqrt{\left( 1 - v \right) v \left( 1 - \tau \right) \tau} \, \cos( 2 \pi ( u - \alpha ) ) \Big].
\end{equation*}
The well-defined boundary curve of a spherical cap $C(\bsw, t)$ has the implicit representation
\begin{equation} \label{eq:implicit.representation}
F(\alpha, \tau ) \DEF \left\| \bsw - \bsz \right\|^2 - 2 \left( 1 - t \right) = 0, \qquad \text{where $\bsz = \bsPhi(\alpha,\tau)$ moves on $\partial C(\bsw, t)$.}
\end{equation}
(Here, $\sqrt{2(1-t)}$ is the distance from the centre $\bsw$ to a point on the boundary of $C(\bsw, t)$.) Relation \eqref{eq:implicit.representation} describes a level curve $\mathcal{C}_{\bsw}$ of the distance function $\| \bsw - \bsPhi(\alpha,\tau) \|$ (for $\bsw$ fixed) in the parameter space which is the unit square, cf. Figure~\ref{fig:curvature}. For further references we record that for each $\bsw$ there are, in general, two exceptional levels 
\begin{align*}
r_{\bsw}^2 &= 2 \left( 1 - t_{\bsw} \right) = \left\| \bsw - \bsp \right\|^2 = 4 v, \\ 
\rho_{\bsw}^2 &= 2 \left( 1 - t_{\bsw}^\prime \right) = 2 \left( 1 + t_{\bsw} \right) = \left\| \bsw + \bsp \right\|^2 = 4 \left( 1 - v \right),
\end{align*}
where the boundary of the spherical cap centred at $\bsw$ passes through the North Pole ($\bsp$) and the South Pole ($-\bsp$), respectively, which may coincide if $\bsw$ is on the equator. For these level curves the singular behaviour at the poles imposed by the parametrisation $\bsPhi$ plays a role. 

Suppose $u=1/2$. Because the sign of the difference $u-\alpha$ is absorbed by the cosine function in the distance function, a level curve (a level set) is symmetric with respect to the vertical line $\alpha = u$. The shape of the curves (sets) do not change when the point $\bsw$ is rotated about the polar axis except a part moving outside the left side of the unit square enters at the right side (``wrap around''). This ``modulo $1$'' behaviour complicates considerations regarding convexity of the pre-image of a spherical cap centred at $\bsw$ under the distance function. Similarly and in the same sense, the level curves (sets) are symmetric with respect to the vertical line $\alpha = u \pm 1/2 \mod 1$ which passes through the parameter point of the antipodal point $-\bsw = \bsPhi( u \pm 1/2 \mod 1, 1 - v)$. When identifying the left and right side of the unit square $[0,1]^2$, we get the ``cylindrical view''. Clearly, all level curves except the critical ones associated with the distance to one of the poles are closed on the open cylinder. Thus, the critical level curves separate the open cylinder into three parts corresponding to the cases when neither pole is contained in the spherical cap centred at $\bsw$, only one pole is contained in the spherical cap, and both poles are contained in the spherical cap. In both the first and last case the level curve can not escape the level set bounded by a critical curve (and the boundary of the cylinder). It is closed even in the unit square provided the level set is contained in its interior. In the middle case the level curves wrap around the cylinder; that is, start at the left side of the unit square and end at its right side at the same height; cf. Figure~\ref{fig:curvature}.

{\bf Let $\bsw$ be the North or the South Pole.} Then the level curves are horizontal lines in the unit square which are smooth curves. Moreover, the pre-image of a spherical cap centred at one of these poles is a convex set (a rectangle).

{\bf Let $\bsw$ be different from either pole (that is $0 < v < 1$).} We use the signed curvature of the implicitly given level curve to determine the segments where it is convex (concave). First, we collect the partial derivatives up to second order:
\begin{subequations} \label{eq:partial.derivatives}
\begin{align}
F_{\alpha}(\alpha, \tau ) &= - 16 \pi \sqrt{\left( 1 - v \right) v \left( 1 - \tau \right) \tau} \, \sin( 2 \pi ( u - \alpha ) ), \label{eq:Fa} \\
F_{\tau}(\alpha, \tau ) &= 4 \left( 1 - 2 v - \left( 1 - 2 \tau \right) \frac{\sqrt{\left( 1 - v \right) v}}{\sqrt{\left( 1 - \tau \right) \tau}} \, \cos( 2 \pi ( u - \alpha ) ) \right), \label{eq:Ft} \\
F_{\alpha\alpha}(\alpha, \tau ) &= 32 \pi^2 \sqrt{\left( 1 - v \right) v \left( 1 - \tau \right) \tau} \, \cos( 2 \pi ( u - \alpha ) ), \label{eq:Faa} \\
F_{\alpha\tau}(\alpha, \tau ) &= - 8 \pi \left( 1 - 2 \tau \right) \frac{\sqrt{\left( 1 - v \right) v}}{\sqrt{\left( 1 - \tau \right) \tau}} \, \sin( 2 \pi ( u - \alpha ) ), \label{eq:Fat} \\
F_{\tau\tau}(\alpha, \tau ) &= \frac{2}{\left( 1 - \tau \right) \tau} \frac{\sqrt{\left( 1 - v \right) v}}{\sqrt{\left( 1 - \tau \right) \tau}} \, \cos( 2 \pi ( u - \alpha ) ).  \label{eq:Ftt}
\end{align}
\end{subequations}

We observe that the partial derivatives involving differentiation with respect to $\tau$ become singular as $\tau$ approaches $0$ (North Pole) or $1$ (South Pole). 

The signed curvature at a point $(\alpha, \tau)$ of $\mathcal{C}_{\bsw}$ is given by
\begin{align} \label{eq:curvature}
\kappa 
&= \kappa(\alpha, \tau) = \frac{F_{\alpha\alpha} F_{\tau}^2 - 2 F_{\alpha \tau} F_{\alpha} F_{\tau} + F_{\tau\tau} F_{\alpha}^2}{\left( F_{\alpha}^2 + F_{\tau}^2 \right)^{3/2}}.
\end{align}

First, we discuss the denominator. Substituting the relations \eqref{eq:Fa} and \eqref{eq:Ft} we obtain
\begin{equation*}
\begin{split}
F_{\alpha}^2 + F_{\tau}^2 
&= 16 \left( 1 - 2 v - \left( 1 - 2 \tau \right) \frac{\sqrt{\left( 1 - v \right) v}}{\sqrt{\left( 1 - \tau \right) \tau}} \cos( 2 \pi ( u - \alpha ) ) \right)^2 \\
&\phantom{=}+ 16^2 \pi^2 \left( 1 - v \right) v \left( 1 - \tau \right) \tau \left[ \sin( 2 \pi ( u - \alpha ) ) \right]^2.
\end{split}
\end{equation*}
A necessary condition for the vanishing of $F_{\alpha}^2 + F_{\tau}^2$ is $\sin( 2 \pi ( u - \alpha ) ) = 0$; that is, either $\alpha = u$ or $\alpha = u \pm 1/2 \mod 1$. In the first case one has $\cos( 2 \pi ( u - \alpha ) ) = 1$ and, therefore, $F_{\alpha}^2 + F_{\tau}^2 = 0$ if and only if $\tau = v$. In the second case one has $\cos( 2 \pi ( u - \alpha ) ) = -1$ and, therefore, $F_{\alpha}^2 + F_{\tau}^2 = 0$ if and only if $\tau = 1-v$. It follows that the denominator of the curvature formula \eqref{eq:curvature} vanishes if and only if $\bsz = \bsPhi(\alpha,\tau)$ (on the boundary of the spherical cap centred at $w = \bsPhi(u,v)$) coincides with $\bsw$ (that is the spherical cap degenerates to the point $\bsw$) or $\bsz$ coincides with the antipodal point of $\bsw$ (that is the closed spherical cap is the whole sphere). In either of these cases the pre-image of the spherical cap under $\bsPhi$ is convex.

Suppose that the {\bf spherical cap is neither a point nor the whole sphere}. Then $F_{\alpha}^2 + F_{\tau}^2 \neq 0$. For the numerator in \eqref{eq:curvature} we obtain after some simplifications
\begin{align*}
& F_{\alpha\alpha} F_{\tau}^2 - 2 F_{\alpha \tau} F_{\alpha} F_{\tau} + F_{\tau\tau} F_{\alpha}^2 \\ & = 512 \pi^2 \sqrt{\left( 1 - v \right) v \left( 1 - \tau \right) \tau} \left( 1 - 2 v - \left( 1 - 2 \tau \right) \frac{\sqrt{\left( 1 - v \right) v}}{\sqrt{\left( 1 - \tau \right) \tau}} \cos( 2 \pi (u - \alpha) ) \right)^2 \cos( 2 \pi (u - \alpha) ) \\
&\phantom{=}+ 512 \pi^2 \left( 1 - v \right) v \frac{\sqrt{\left( 1 - v \right) v}}{\sqrt{\left( 1 - \tau \right) \tau}} \cos( 2 \pi (u - \alpha) ) \left[ \sin( 2 \pi (u - \alpha) ) \right]^2 \\
&\phantom{=}- 1024 \pi^2 \left( 1 - v \right) v \left( 1 - 2 \tau \right) \left( 1 - 2 v - \left( 1 - 2 \tau \right) \frac{\sqrt{\left( 1 - v \right) v}}{\sqrt{\left( 1 - \tau \right) \tau}} \cos( 2 \pi (u - \alpha) ) \right) \left[ \sin( 2 \pi (u - \alpha) ) \right]^2.
\end{align*}
The right-hand side above can be written as a polynomial in $x = \cos( 2 \pi (u - \alpha) )$ as follows
\begin{equation*}
\begin{split}
F_{\alpha\alpha} F_{\tau}^2 - 2 F_{\alpha \tau} F_{\alpha} F_{\tau} + F_{\tau\tau} F_{\alpha}^2 
&= A x \left( 1 - 2 v - B  H x \right)^2 + \frac{A^2 B x \left( 1 - x^2 \right)}{512 \pi^2 \left( 1 - \tau \right) \tau} \\
&\phantom{=}- 2 A B H \left( 1 - 2v - B H x \right) \left( 1 - x^2 \right),
\end{split}
\end{equation*}
where 
\begin{equation*}
A = 512 \pi^2 \sqrt{\left( 1 - v \right) v \left( 1 - \tau \right) \tau}, \qquad B = \frac{\sqrt{\left( 1 - v \right) v}}{\sqrt{\left( 1 - \tau \right) \tau}}, \qquad H = 1 - 2 \tau.
\end{equation*}
Reordering with respect to falling powers of $x$, we observe that the coefficient of $x^2$ vanishes and after simplifications we arrive at
\begin{equation*}
\begin{split}
F_{\alpha\alpha} F_{\tau}^2 &- 2 F_{\alpha \tau} F_{\alpha} F_{\tau} + F_{\tau\tau} F_{\alpha}^2 = - A B \left( B H^2 + \frac{A}{512 \pi^2 \left( 1 - \tau \right) \tau} \right) x^3 \\
&\phantom{=}+ A \left( 2 B^2 H + \left( 1 - 2v \right)^2+ \frac{A B}{512 \pi^2 \left( 1 - \tau \right) \tau} \right) x - 2 A B H \left( 1 - 2 v \right).
\end{split}
\end{equation*}
%
The coefficient of $x^3$ does not vanish for $0 < v < 1$ and $0<\tau<1$. Hence, we divide and get
\begin{equation*}
- \frac{F_{\alpha\alpha} F_{\tau}^2 - 2 F_{\alpha \tau} F_{\alpha} F_{\tau} + F_{\tau\tau} F_{\alpha}^2}{A B \left( B H^2 + \frac{A}{512 \pi^2 \left( 1 - \tau \right) \tau} \right)} = x^3 + p \, x + q \FED Q(\tau; x) \FED Q(x),
\end{equation*}
where (using the definitions of $A$, $B$, and $H$)
\begin{equation*}
p = p(\tau) = - \frac{\left( 1 - 2 v \right)^2 + B^2 \left( 1 + 2 H^2 \right)}{B^2 \left( 1 + H^2 \right)}, \qquad q = q(\tau) = \frac{2 \left( 1 - 2 v \right) \left( 1 - 2 \tau \right)}{B \left( 1 + H^2 \right)}.
\end{equation*}
We observe that $p(1-\tau) = p(\tau)$ and $q(1-\tau) = - q(\tau)$. Hence, $Q(\tau; x) = - Q(1-\tau; -x)$ for all $x$. In particular, if $\xi$ is a zero of $Q(\tau; \cdot)$, then so is $-\xi$ a zero of $Q(1-\tau; \cdot)$ and vice versa. The monic polynomial $Q$ of degree $3$ with real coefficients has either one or three real solutions (counting multiplicity). 
\MARKED{Black}{With the help of Mathematica we find that the discriminant of the polynomial $Q$ is positive,
\begin{equation*}
\discr(Q) = - 4 p^3 - 27 q^2 > 0;
\end{equation*}
that is, the polynomial $Q$ has three distinct real roots.
}
\begin{figure}[ht]
\begin{center} 
\includegraphics[scale=.05]{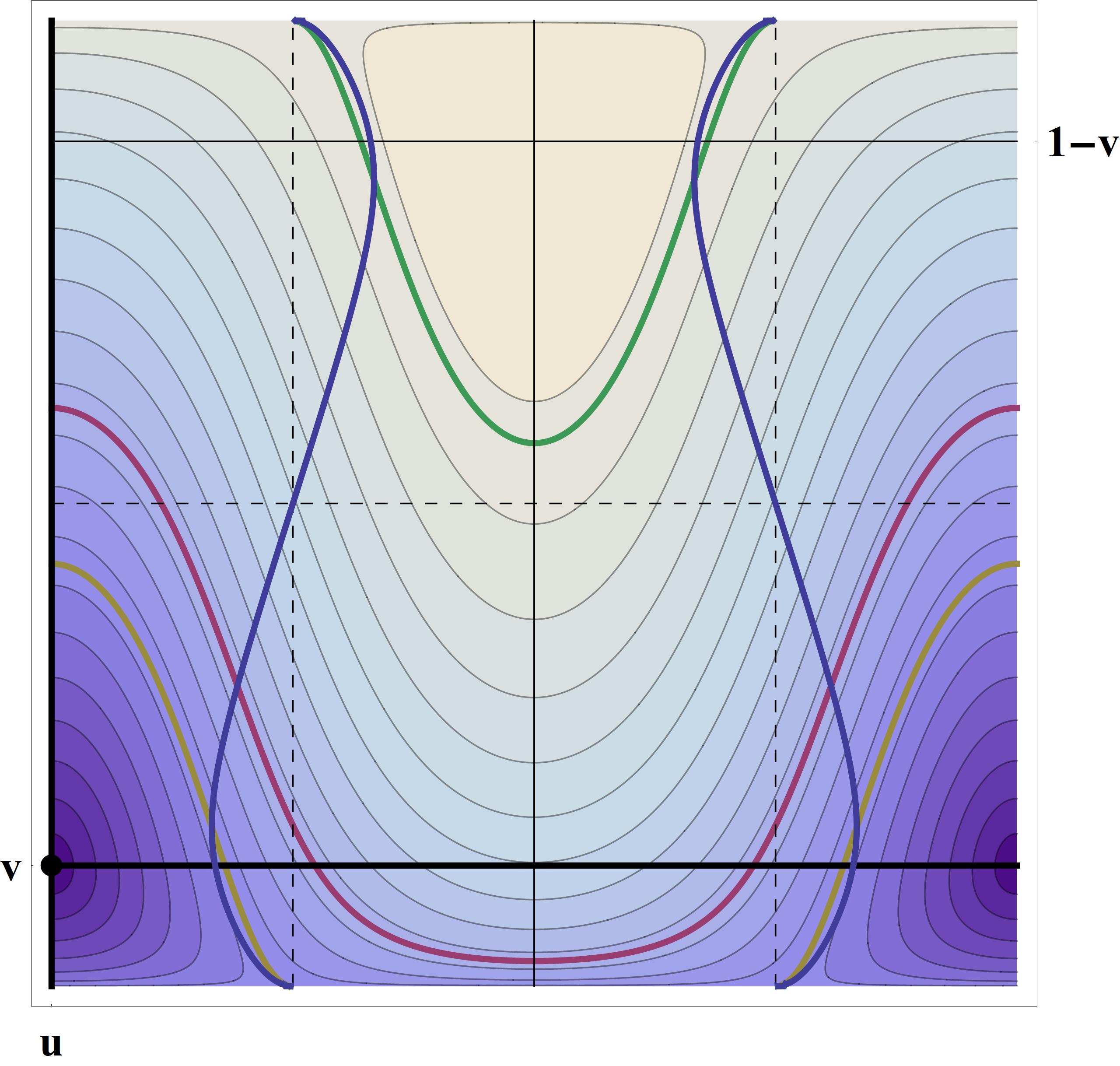}
\includegraphics[scale=.05]{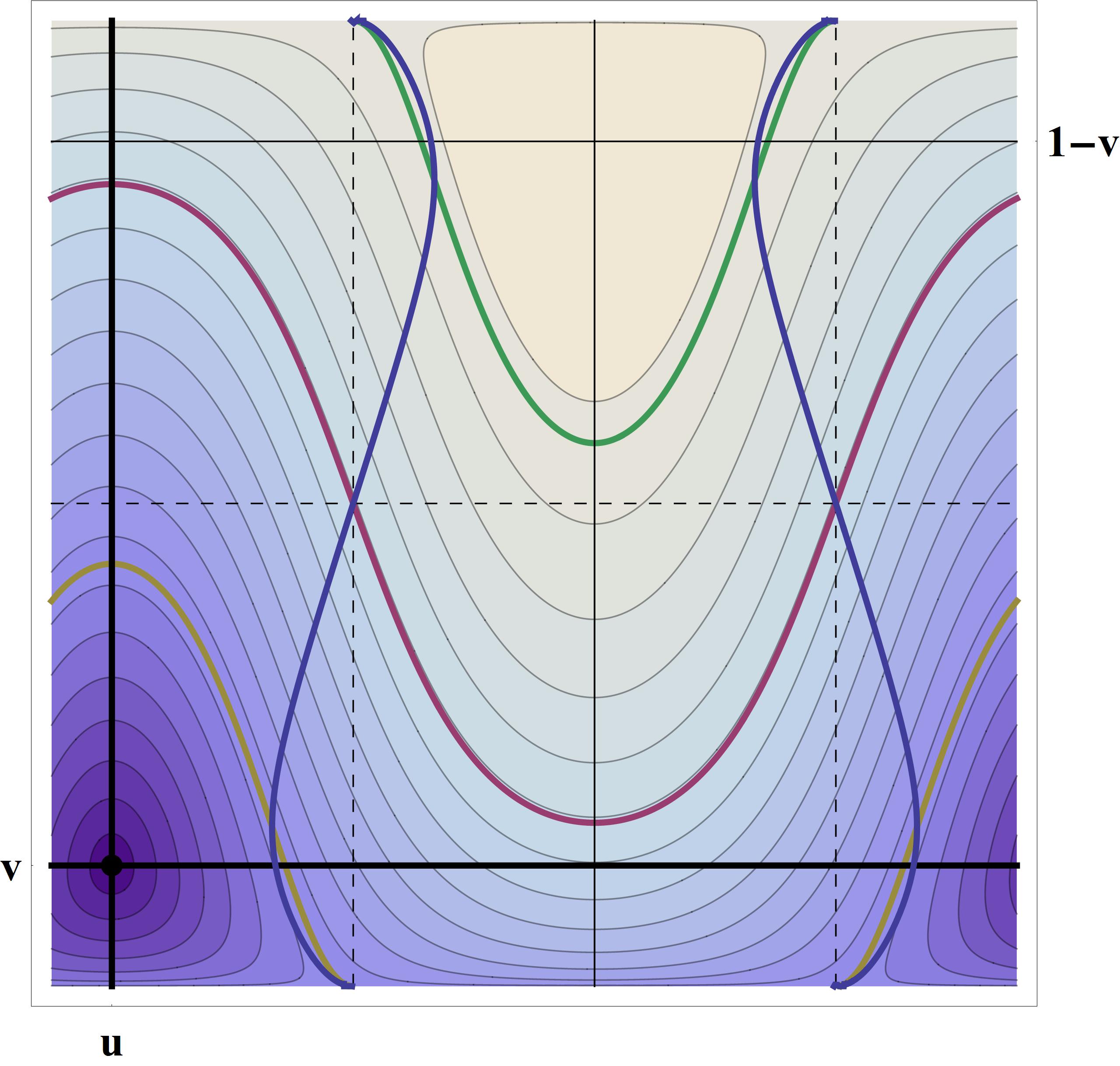}
\includegraphics[scale=.05]{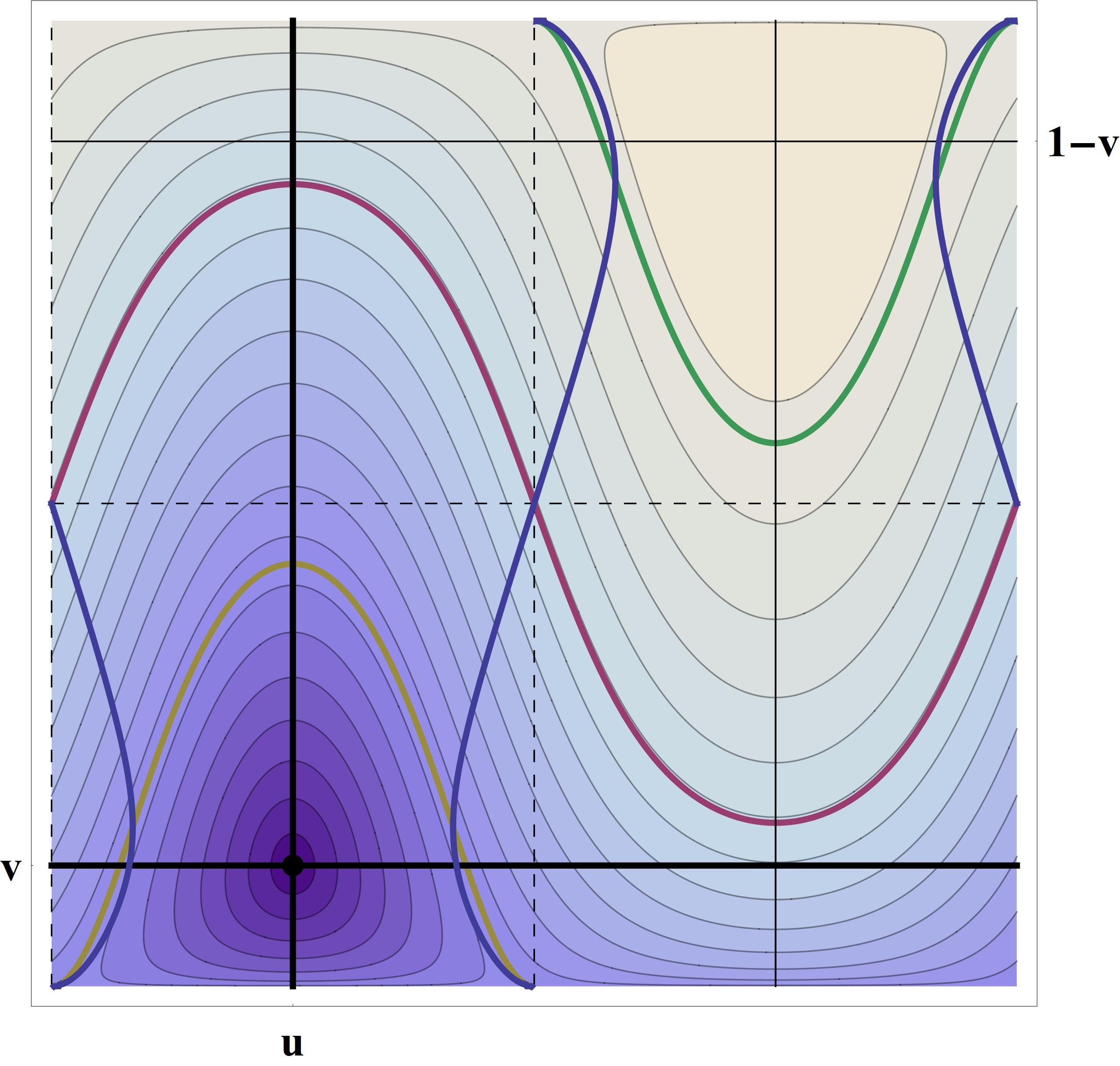}
\includegraphics[scale=.05]{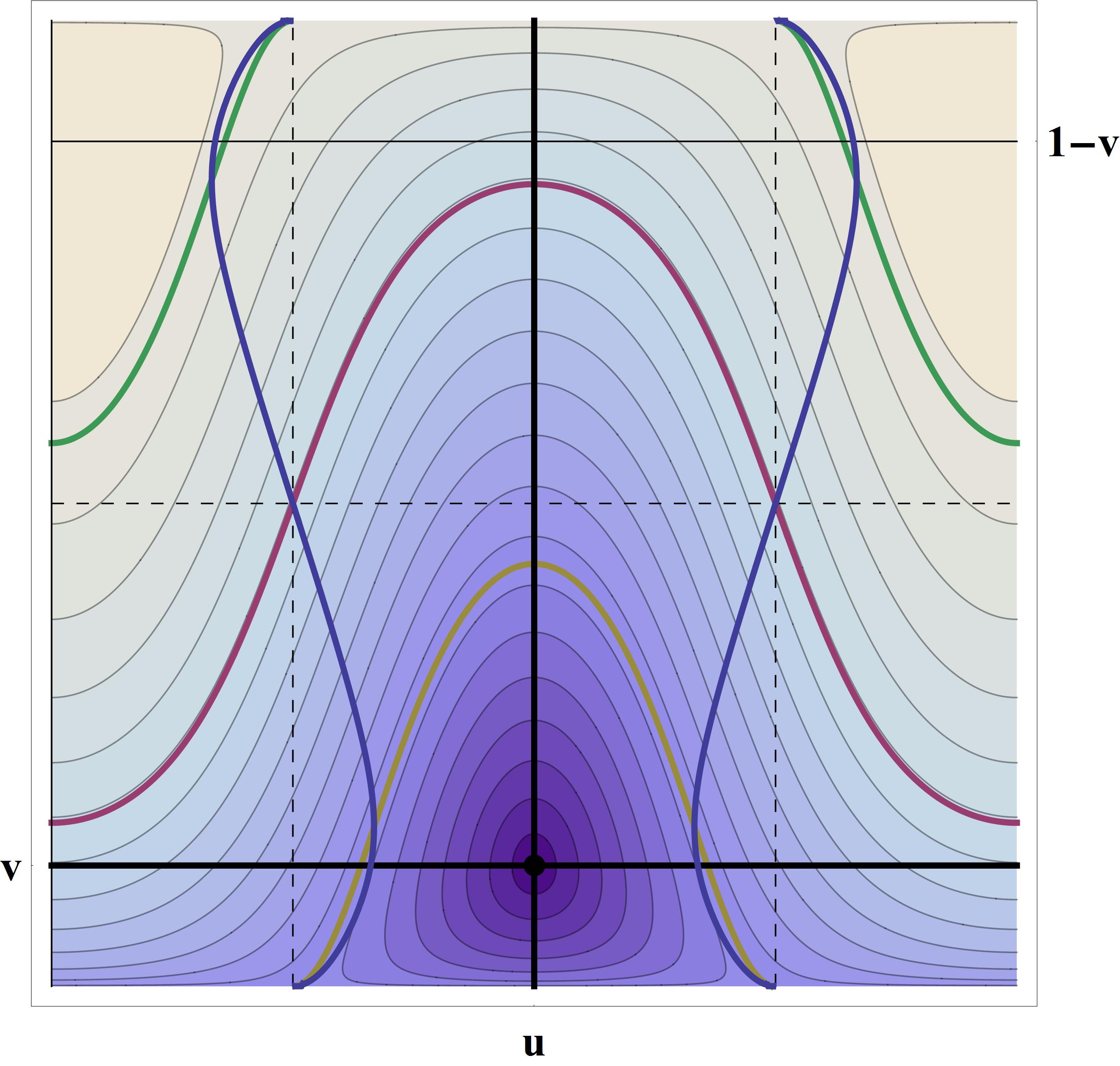}
\includegraphics[scale=.05]{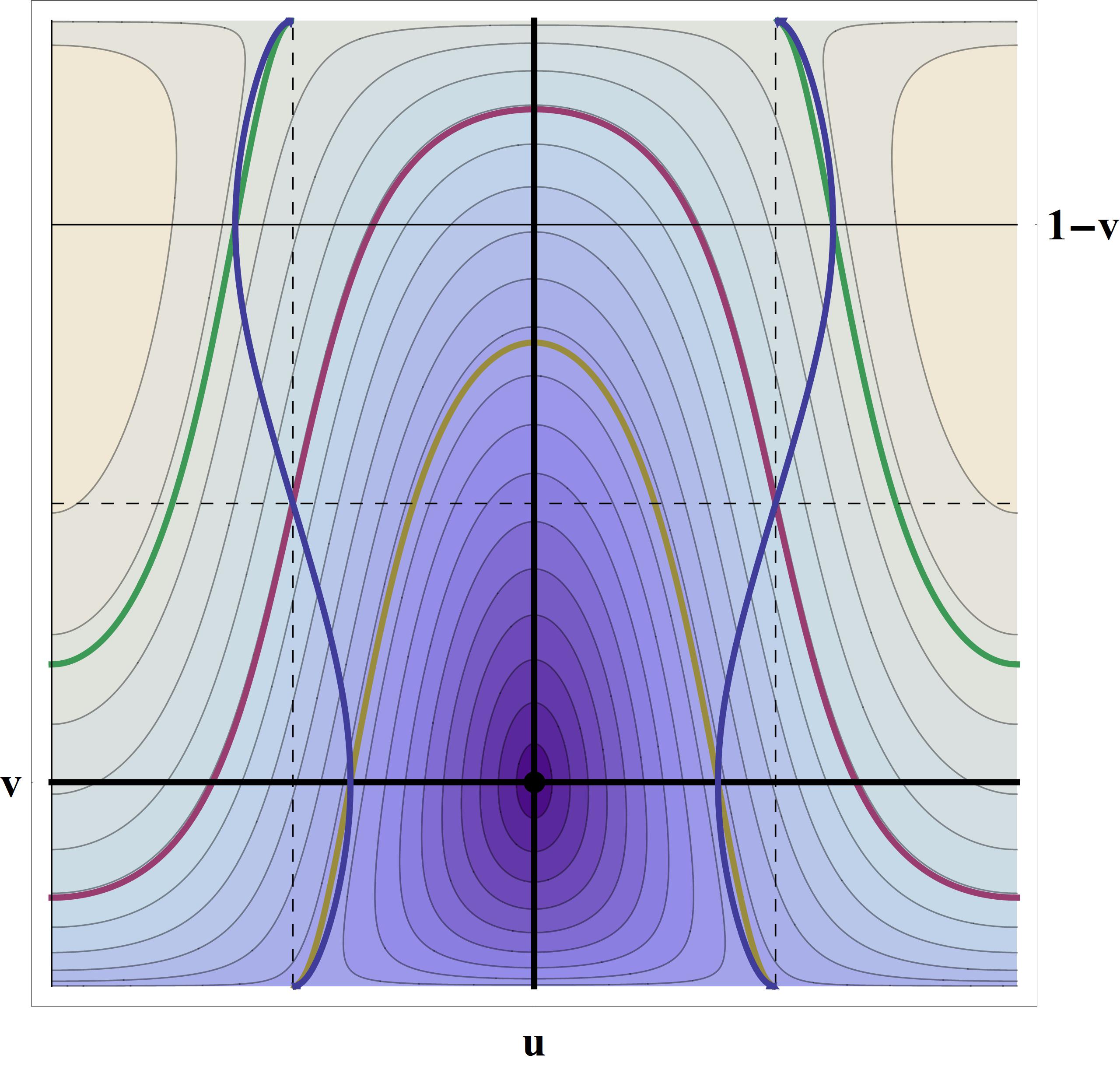}
\includegraphics[scale=.05]{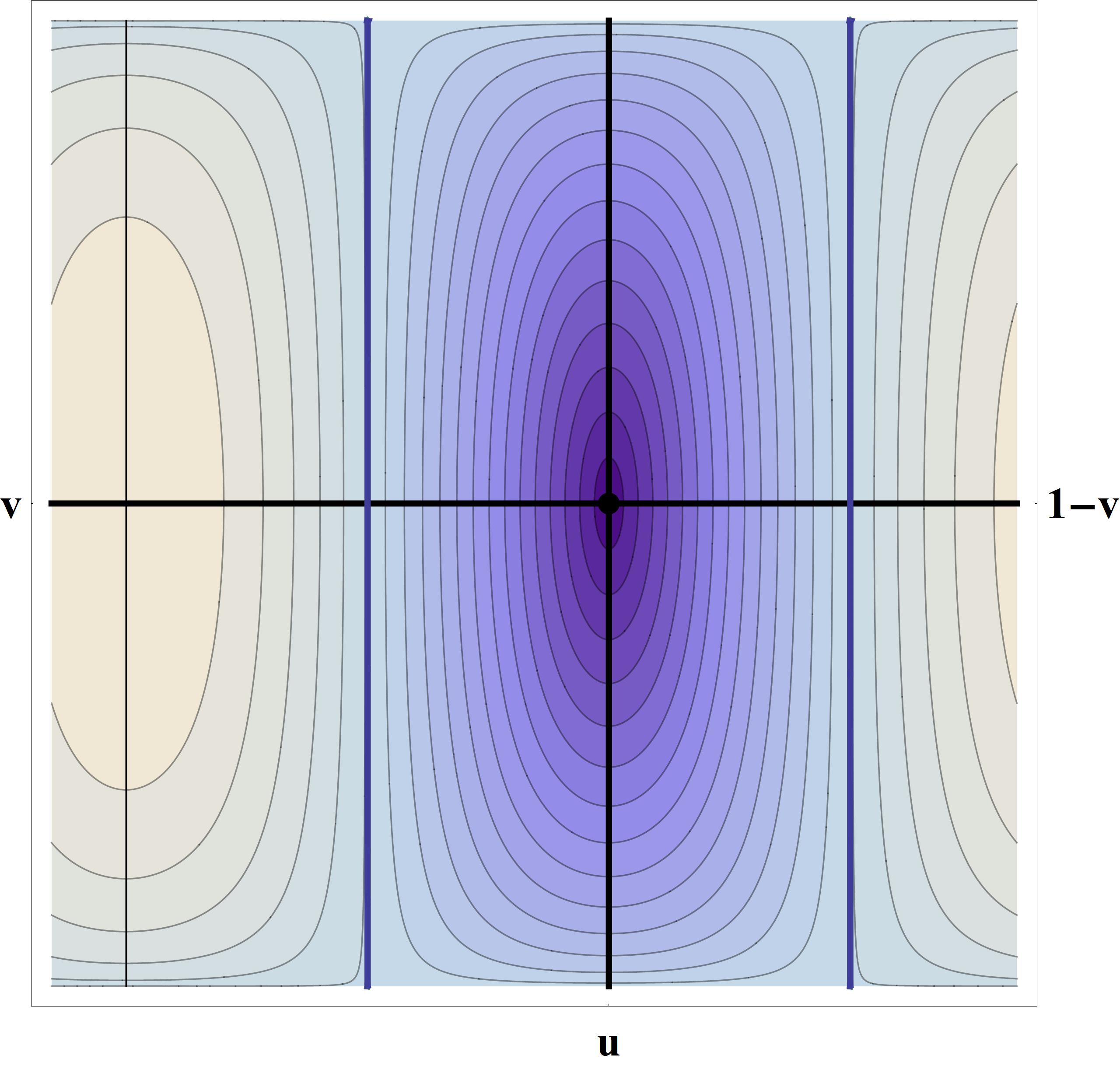}
\caption{\label{fig:curvature} Contour plot of the distance function $\| \bsw - \bsPhi(\alpha,\tau)\|$ for various $\bsw$. Level curves for distances $\sqrt{2}$ and $2\sqrt{v}$, $2\sqrt{1-v}$ are emphasised. The solid curve shows vanishing curvature.}
\end{center}
\end{figure}

For $v = 1/2$ the polynomial $Q$ reduces to
\begin{equation*}
Q( x ) = x^3 + p x, \qquad \text{where} \quad p = - \frac{1+2 H^2}{1+H^2} = - \frac{1 + 2 \left( 1 - 2 \tau \right)^2}{1 + \left( 1 - 2 \tau \right)^2}.
\end{equation*}
The solutions $\pm1$ (if $\tau = 1/2$) correspond to $\bsPhi(\alpha,\tau) = \pm \bsw$ and can be discarded, since we assumed that the spherical cap is neither a point nor the whole sphere. The solution zero yields that $\cos(2\pi(u-\alpha)) = 0$, which in turn shows that the zeros of the curvature \eqref{eq:curvature} form the vertical lines at $\alpha = u \pm 1/4 \mod 1$ if $v = 1 / 2$. 

Let $v \neq 1/2$. Suppose $Q$ has a zero at $\pm1$. Then 
\begin{equation*}
0 = Q(1) = 1 + p + q = \pm \frac{\left( 1 - 2 v + B H \right)^2}{B^2 \left( 1 + H^2 \right)} = \pm \frac{\left( 1 - 2 v + \frac{\sqrt{\left( 1 - v \right) v}}{\sqrt{\left( 1 - \tau \right) \tau}} \left( 1 - 2 \tau \right) \right)^2}{B^2 \left( 1 + H^2 \right)}
\end{equation*}
which can only happen when $\tau = v$. This implies that $\bsPhi(\alpha,\tau) = \bsw$, which is excluded by our assumptions. Suppose $Q$ has a zero at $0$. Since $v \neq 1/2$, this can only happen when $\tau = 1/2$.

Having established that $-1$, $0$ (except when $\tau = 1/2$), and $1$ cannot be zeros of the polynomial $Q$, we use Storm's theorem to show that the polynomial $Q$ has precisely one solution either in the interval $(-1,0)$ or in the interval $(0,1)$ if $\tau \neq 1/2$, cf. Table~\ref{tbl:sturm}. First, we generate the canonical Storm chain by applying Euclid's algorithm to $Q$ and its derivative:
\begin{align*}
p_0(x) &= Q(x) = x^3 + p x + q, \\
p_1(x) &= Q^\prime(x) = 3 x^2 + p, \\
p_2(x) &= p_1(x) q_0(x) - p_0(x) = - \frac{2p}{3} x - q, \\
p_3(x) &= p_2(x) q_1(x) - p_1(x) = - p - \frac{27 q^2}{4 p^2} = \frac{- 4 p^3 - 27 q^2}{4 p^2} = \frac{\discr(Q)}{4p^2} > 0, \\
p_4(x) &= 0.
\end{align*}
Let $\sigma(x)$ denote the number of sign changes (not counting a zero) in the sequence 
\begin{equation*}
\{p_0(x), p_1(x), p_2(x), p_3(x)\}.
\end{equation*}
For $x = 0$ we obtain the canonical Storm chain $\{q, p, -q, \discr(Q) / (4 p^2)\}$ and we conclude that $\sigma(0) = 2$ for $(1/2 - v)(1/2 - \tau) > 0$ and $\sigma(0) = 1$ otherwise. For $x=1$ we have
\begin{align*}
p_0(1) &= 1 + p + q = - \frac{\left( 1 - 2 v + B H \right)^2}{B^2 \left( 1 + H^2 \right)} < 0, \\
p_1(1) &= 3 + p = \frac{2 + H^2}{1 + H^2} - \frac{\left( 1 - 2 v \right)^2}{B^2 \left( 1 + H^2 \right)} > 0 \qquad \text{iff $(1 - \tau ) \tau < 3 ( 1 - v ) v$,} \\
p_2(1) &= - \frac{2p}{3} - q = \frac{2 \left[ B^2 \left( 1 + 2 H^2 \right) + \left( 1 - 2 v \right)^2 - 3 B H \left( 1 - 2 v \right) \right]}{3 B^2 \left( 1 + H^2 \right)} \MARKED{Black}{> 0}, \\
p_3(1) &= \frac{\discr(Q)}{4p^2} > 0.
\end{align*}
\MARKED{Black}{(The positivity of $p_2(1)$ has been verified using Mathematica.)} Hence, in all three cases $p_1(1) < 0$, $p_1(1) = 0$, and $p_1(1) > 0$ one gets $\sigma(1) = 1$. For $x = -1 $ we have
\begin{align*}
p_0(-1) &= -1 - p + q = \frac{\left( 1 - 2 v + B H \right)^2}{B^2 \left( 1 + H^2 \right)} > 0, \\
p_1(-1) &= 3 + p = \frac{2 + H^2}{1 + H^2} - \frac{\left( 1 - 2 v \right)^2}{B^2 \left( 1 + H^2 \right)} > 0 \qquad \text{iff $(1 - \tau ) \tau < 3 ( 1 - v ) v$,} \\
p_2(-1) &= \frac{2p}{3} - q = - \frac{2 \left[ B^2 \left( 1 + 2 H^2 \right) + \left( 1 - 2 v \right)^2 + 3 B H \left( 1 - 2 v \right) \right]}{3 B^2 \left( 1 + H^2 \right)} < 0, \\
p_3(-1) &= \frac{\discr(Q)}{4p^2} > 0.
\end{align*}
Here, we obtain $\sigma(-1) = 2$. Thus, by Storms Theorem, the difference $\sigma(-1) - \sigma(0)$ gives the number of real zeros of $Q$ in the interval $(-1,0]$ and $\sigma(0) - \sigma(1)$ is the number of zeros in $(0,1]$, see Table~\ref{tbl:sturm}.

\begin{table}[h!t]
\caption{\label{tbl:sturm} The number of real zeros of $Q$ in the intervals $(-1,0)$ and $(0,1)$ as it follows from Sturm's Theorem.} 
\begin{tabular}{cc|ccc|cc}
\multicolumn{2}{c|}{Range of $v$ and $\tau$} & $\sigma(-1)$ & $\sigma(0)$ & $\sigma(1)$ & $(-1,0)$ & $(0,1)$ \\
\hline
$0 < v < 1/2$, & $0 < \tau < 1/2$ & $2$ & $2$ & $1$ & $0$ & $1$ \\
$0 < v < 1/2$, & $1/2 < \tau < 1$ & $2$ & $1$ & $1$ & $1$ & $0$ \\
$1/2 < v < 1$, & $0 < v < 1/2$    & $2$ & $1$ & $1$ & $1$ & $0$ \\
$1/2 < v < 1$, & $1/2 < \tau < 1$ & $2$ & $2$ & $1$ & $0$ & $1$
\end{tabular}
\end{table}

Having established that the polynomial $Q$ has to every $0 < \tau < 1$ precisely one zero in the interval $(-1,1)$ (cf. Table~\ref{tbl:sturm} and previous considerations), it follows that to each such zero $x = x(\tau)$ there correspond two values of $\alpha$ by means of the trigonometric equation
\begin{equation} \label{eq:cos.rel}
\cos( 2 \pi ( u - \alpha ) ) = x( \tau ).
\end{equation}
Because of the continuity of the coefficients in the polynomial $Q$ (if $0 < \tau < 1$) the zero $x(\tau)$ is also changing continuously and so are the solutions $\alpha_1$ and $\alpha_2$. A jump can happen when they are taken modulo $1$. We further record that along the vertical lines $\alpha = u \pm 1/4 \mod 1$ one has
\begin{equation*}
\kappa( u \pm 1/4, \tau ) = - 16 \pi^2 \frac{\left( 1 - v \right) v \left( 1 - 2 v \right) \left( 1 - 2 \tau \right)}{\left[ 1 - 4 \left( 1 - v \right) v \left( 1 - 4 \pi^2 \left( 1 - \tau \right) \tau \right) \right]^{3/2}}
\end{equation*}
which vanishes at $\tau = 1/2$ and along the vertical lines $\alpha = u \pm 1/2 \mod 1$ and $\alpha = u$ one has
\begin{equation*}
\kappa( u, \tau ) = \kappa( u \pm 1/2, \tau ) = - 8 \pi^2 \frac{\sqrt{\left( 1 - v \right) v} \left( 1 - \tau \right) \tau}{\left| \left( 1 - 2 v \right) \sqrt{\left( 1 - \tau \right) \tau} + \left( 1 - 2 \tau \right) \sqrt{\left( 1 - v \right) v} \right|}
\end{equation*}
which vanishes only as $\tau \to 0$ or $\tau \to 1$ (if $0 < v < 1$). When identifying the left and right side of the unit square, these two lines separate the two solutions of \eqref{eq:cos.rel} in such a way that in each part the points at which $\kappa(\alpha, \tau)$ vanishes form a connected curve varying about the ``base lines'' $\alpha = u \pm 1/4 \mod 1$. It follows that these curves (together with the boundary of the cylinder) divide the cylinder into two parts in each of which the curvature $\kappa(\alpha, \tau)$ has the same sign. The shapes of these curves do not change when $\bsw$ is rotated about the polar axis. We may fix $u = 1/2$ and because of the symmetries (including relation $Q(\tau; x ) = - Q(1-\tau; -x )$) it suffices to consider the curve of the zeros of $\kappa(\alpha, \tau)$ for $0 < v < 1/2$ (recall that these curves are vertical lines for $v = 1/2$) and $0 < \tau < 1/2$ which lies in the strip $0 < \alpha < 1/2$. We know that the zero $x = x(\tau)$ of $Q$ (we are interested in) in the given setting is in $(0,1)$ (cf. Table~\ref{tbl:sturm}). Using $\dot{x}$ to denote the derivative of $x$ with respect to $\tau$, implicit differentiation gives
\begin{equation} \label{eq:Q.prime.relation}
Q^\prime( x(\tau) ) \, \dot{x}(\tau) = - \dot{p}(\tau) \, x(\tau) - \dot{q}(\tau),
\end{equation}
where it can be easily seen that $Q^\prime( x(\tau) ) < 0$, since $x(\tau)$ is simple and $Q(x)$ has a negative global minimum for positive $x$. For $\tau$'s in $(0,1/2)$ at which $\dot{x}(\tau)$ vanishes one has 
\begin{equation} \label{eq:x.crit}
x(\tau) = - \dot{q}(\tau) / \dot{p}(\tau) = \frac{1-6 \left( 1 - \tau \right) \tau}{1 - 6 \left( 1 - v \right) v} \frac{1 - 2 v}{1 - 2 \tau} \sqrt{\frac{\left( 1 - v \right) v}{\left( 1 - \tau \right) \tau}},
\end{equation}
which follows by substituting 
\begin{align*}
\dot{p}( \tau ) = - \frac{\left( 1 - 6 \left( 1 - v \right) v \right) \left( 1 - 2 \tau \right)}{2 \left( 1 - v \right) v \left( 1 - 2 \left( 1 - \tau \right) \tau \right)^2}, \qquad \dot{q}( \tau ) = \frac{\left( 1 - 6 \left( 1 - \tau \right) \tau \right) \left( 1 - 2 v \right)}{2 \left( 1 - \tau \right) \tau \left( 1 - 2 \left( 1 - \tau \right) \tau \right)^2} \sqrt{\frac{\left( 1 - \tau \right) \tau}{\left( 1 - v \right) v}}.
\end{align*}
For the second derivative of $x$ at such $\tau$'s we get 
\begin{equation*}
Q^\prime( x( \tau ) ) \, \ddot{x}(\tau) = - \ddot{p}( \tau ) \, x( \tau ) - \ddot{q}( \tau ) = \frac{1-2v}{4 \left( \left( 1 -  \tau \right) \tau \right)^{3/2} \sqrt{\left( 1 - v \right) v} \left[ 1 - 2 \tau \left( 2 - \tau \left( 3 - 2 \tau \right) \right) \right]}.
\end{equation*}
The square-bracketed expression is strictly monotonically decreasing on $(0,1/2)$ and evaluates to zero at $\tau=1/2$. Thus, the left-hand side has to be positive for all critical $\tau$ in $(0,1/2)$ which in turn implies that $\ddot{x}(\tau) < 0$ at such $\tau$'s. We conclude that $x(\tau)$ has a single maximum in $(0,1/2)$, since it cannot be constant as can be seen from \eqref{eq:x.crit} by evaluating $x(\tau)$ at that $\tau^\prime$ at which $1-6 \left( 1 - \tau^\prime \right) \tau^\prime = 0$ and $x(\tau) \to \infty$ as $\tau \to 1/2$. By \eqref{eq:cos.rel} (recall $u = 1/2$)
\begin{equation*}
- \cos( 2 \pi \alpha(\tau) ) = x(\tau)
\end{equation*}
and it follows that $\alpha(\tau)$ has a single maximum in $(0,1/2)$. 
\begin{prop}
The set of zeros of $\kappa(\alpha, \tau)$ and the horizontal sides of the unit square divide the unit square either into three parts of equal sign of $\kappa(\alpha, \tau)$ or in four parts, cf. Figure~\ref{fig:curvature}. 
\end{prop}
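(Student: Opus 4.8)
The plan is to transfer the structure already obtained on the ``cylinder'' (the unit square with its vertical sides $\alpha=0$ and $\alpha=1$ identified) to the unit square itself. The polar cases $v\in\{0,1\}$ are excluded by the standing assumption $0<v<1$ (there $\kappa\equiv 0$ and pre-images of caps are rectangles). Recall what has been established for $0<v<1$: the zero set of $\kappa$ is cut out by $Q(\tau;\cos(2\pi(u-\alpha)))=0$, i.e.\ by $\cos(2\pi(u-\alpha))=x(\tau)$, where $x(\tau)$ is the unique root of $Q(\tau;\cdot)$ in $(-1,1)$; the function $x$ is continuous on $(0,1)$ with $x(\tau)\to 0$ as $\tau\to 0,\tfrac12,1$, it has a single positive maximum on $(0,\tfrac12)$ and, by the relation $Q(\tau;x)=-Q(1-\tau;-x)$, a single negative minimum on $(\tfrac12,1)$. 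Hence the two solution branches $\alpha=\alpha_\pm(\tau)$ of $\cos(2\pi(u-\alpha))=x(\tau)$ are continuous arcs $\gamma_\pm$ on the cylinder, each joining the boundary circle $\tau=0$ to the boundary circle $\tau=1$ and oscillating exactly once about its ``base line'' $\alpha=u\pm\tfrac14\bmod 1$ with amplitude strictly below $\tfrac14$ (for $v=\tfrac12$ these degenerate to the straight lines $\alpha=u\pm\tfrac14$). It has also been shown that $\gamma_+\cup\gamma_-$, together with the two boundary circles, splits the cylinder into exactly two regions, and that $\kappa$ changes sign across each of $\gamma_\pm$ (since $x(\tau)$ is a \emph{simple} root of $Q(\tau;\cdot)$), so each of the two cylinder-regions carries one sign of $\kappa$.

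\textbf{Reduction to a counting problem.}
First I would observe that the unit square is exactly the cylinder slit along the vertical segment $\Sigma=\{\alpha=0\}=\{\alpha=1\}$; therefore the partition of the square induced by (zero set of $\kappa$)\,$\cup$\,(horizontal sides $\tau\in\{0,1\}$) is obtained from the two-region partition of the cylinder by \emph{additionally slitting along $\Sigma$}. Because $\kappa$ is continuous and vanishes precisely on $\gamma_+\cup\gamma_-$, every connected component of the square minus $\gamma_+\cup\gamma_-$ automatically carries a constant sign of $\kappa$; so the entire statement reduces to counting those components and showing the count is $3$ or $4$.

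\textbf{The case analysis.}
Next I would perform the slitting by cases, according to how $\Sigma$ meets $\gamma_+\cup\gamma_-$. Since each $\gamma_\pm$ stays within distance $<\tfrac14$ of $\alpha=u\pm\tfrac14\bmod 1$, the two arcs never meet $\Sigma$ at the same height, and whether (say) $\gamma_-$ meets $\Sigma$ is governed by whether $\cos(2\pi u)$ lies in the range of $x(\tau)$; the single-extremum property of $x$ is exactly what guarantees that, when this happens, $\gamma_-$ passes through $\Sigma$ only over a single $\tau$-interval and separates the two cylinder-regions there in the expected way. \emph{Case (a): $\Sigma$ misses $\gamma_+\cup\gamma_-$} (always true when $u=\tfrac12$, in particular when $v=\tfrac12$). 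Then $\Sigma$ lies in the interior of one cylinder-region; slitting a disc-like region along an arc joining its boundary to its boundary splits it into two, so the square has $2+1=3$ components — the three parts of equal sign. \emph{Case (b): $\Sigma$ meets exactly one of $\gamma_+,\gamma_-$.} Then in passing to the square that arc is cut into two arcs, and the expectation (borne out by Figure~\ref{fig:curvature}) is that exactly one additional component is created, for a total of $4$ parts. Finally one records the sign of $\kappa$ on each piece from the sign of $Q(\tau;\cos(2\pi(u-\alpha)))$, constant on each piece by the argument of the second paragraph.

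\textbf{Where the difficulty lies.}
The step I expect to be the real obstacle is making the case analysis of the third paragraph rigorous — in particular pinning down precisely how the slit $\Sigma$ threads between the two cylinder-regions in Case (b) and confirming that this yields exactly four parts and not more. This is the point at which the already-proved fact that $x(\tau)$ has a single extremum on each of $(0,\tfrac12)$ and $(\tfrac12,1)$ (hence that each $\alpha_\pm$ executes a single oscillation) is indispensable, since it is what bounds the number of $\tau$-subintervals over which $\Sigma$ can cross from one cylinder-region into the other, and thereby the number of extra pieces. By comparison, the constancy of $\sgn\kappa$ on each piece, the symmetry reduction $v\mapsto 1-v$ via $Q(\tau;x)=-Q(1-\tau;-x)$, and the rotational normalisation in $u$ are all routine.
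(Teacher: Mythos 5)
Your overall strategy coincides with what the paper actually does: the paper gives no separate proof of this proposition, but presents it as a summary of the preceding analysis (the two zero curves of $\kappa$ divide the cylinder into two regions of constant sign, each curve oscillating once about its base line $\alpha=u\pm 1/4$ with amplitude at most $1/8$), and your plan of slitting the cylinder along $\Sigma=\{\alpha=0\}$ and counting the resulting components is exactly the transfer that the authors leave implicit. The problem is that you stop precisely at the step that constitutes the content of the statement. In Case (b) you only assert that ``exactly one additional component is created'' and appeal to Figure~\ref{fig:curvature}, and you yourself flag this as the real obstacle; so the four-part alternative is not proved but postulated.

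Worse, with the facts you have correctly assembled the asserted dichotomy does not follow as stated. Each zero curve meets its base line at $\tau=0$, $\tau=1/2$ and $\tau=1$ and bulges to one side on $(0,1/2)$ and to the other on $(1/2,1)$, with amplitude $\tfrac{1}{2\pi}\arcsin|x(\tau_v)|\in(0,1/8)$. Hence if the base line $u-1/4 \bmod 1$ lies strictly between $0$ and that amplitude, the slit $\Sigma$ crosses the corresponding bulge \emph{transversally in two interior points}; your own slitting argument then produces one extra piece in each cylinder region cut by a subarc of $\Sigma$, i.e.\ $2+3=5$ components, not $4$. The single-extremum property of $x(\tau)$ bounds the number of crossings per bulge by two, not by one, so it does not rescue the claim. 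To close the gap you must either carry out the crossing-number count honestly and explain how the statement is to be read in that configuration (for instance by invoking the normalisation $u=1/2$ used throughout the paper's discussion, where neither curve meets $\Sigma$ and one always gets three parts, with four parts arising only in the degenerate positions where a base line or a tangency sits on the vertical edge), or by identifying the wrapped edge-adjacent pieces as a single part; as written, Case (b) is both unproven and, for a generic wrap-around position, false.
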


Using the trigonometric method, we obtain an explicit expression of the zero of $Q$ in $(-1,1)$. The change of variable $x = 2 \sqrt{-p / 3} \, \cos \theta$ gives the equivalence
\begin{equation*}
Q( x ) = 0 \qquad \text{if and only if} \qquad \cos( 3 \theta ) = \frac{3 q}{2 p} \sqrt{\frac{3}{-p}}
\end{equation*}
and therefore
\begin{equation*}
x( \tau ) = 2 \sqrt{- p( \tau ) / 3} \, \cos\Big( \frac{1}{3} \arccos\Big( \frac{3q( \tau )}{2p( \tau )} \sqrt{\frac{3}{-p( \tau )}} \Big) - \frac{2\pi}{3} \Big).
\end{equation*}
(The discarded solutions are either smaller or larger than the given one. By our reasoning, they have to lie outside the interval $(-1,1)$.) \MARKED{Black}{A limit process shows that $x(\tau) \to 0$ as $\tau \to 0$ or $\tau \to 1$. Hence, when moving towards the upper or lower side of the square along the curve of zeros of the curvature, one approaches the corresponding ``base line'' $\alpha = u \pm 1/4 \mod 1$.}

Eliminating the trigonometric term, along a level curve with parameter $t$ (cf. \eqref{eq:implicit.representation}) we have
\begin{equation*}
\begin{split}
F_{\alpha\alpha} F_{\tau}^2 &- 2 F_{\alpha \tau} F_{\alpha} F_{\tau} + F_{\tau\tau} F_{\alpha}^2 = - 16 \pi^2 \Bigg[ \left( \frac{1}{\left( 1 - \tau \right)^2} + \frac{1}{\tau^2} \right) X^3 \\
&\phantom{=}- 8 \left( 1 + 3 \frac{\left( 1 - v \right) v}{\left( 1 - \tau \right) \tau} \left( 1 - 2 \tau \right)^2  \right) X + 64 \left( 1 - v \right) v \left( 1 - 2 v \right) \left( 1 - 2 \tau \right) \Bigg],
\end{split}
\end{equation*}
where $X = t - ( 1 - 2 v ) ( 1 - 2 \tau )$. Reordering the terms and using the substitution $G = t - ( 1 - 2 v )$, we arrive at
\begin{equation}
\begin{split} \label{eq:kapp.along.curve}
F_{\alpha\alpha} &F_{\tau}^2 - 2 F_{\alpha \tau} F_{\alpha} F_{\tau} + F_{\tau\tau} F_{\alpha}^2 = - \frac{16}{\left( 1 - \tau \right)^2 \tau^2} \Big[ G^3 - 2 G \left( 3 + G^2 + 3 G t - 3 t^2 \right) \tau \\
&\phantom{=}+ 2 \left( 2 t + 6 G^2 t - 2 t^3 + 3 G \left( 3 - t^2 \right) \right) \tau^2 - 4 \left( 5 t - 3 t^3 + 3 G \left( 1 + t^2 \right) \right) \tau^3 + 16 t \tau^4 \Big].
\end{split}
\end{equation}
The zeros of the numerator of the curvature \eqref{eq:curvature} are determined by a polynomial in $\tau$ of degree $4$. Thus, there can be at most four pairs (symmetry with respect to $\alpha = u$) of points on the level set at which the curvature vanishes.  For the sake of completeness, in a similar way one obtains
\begin{equation*}
\left( F_{\alpha}^2 + F_{\tau}^2  \right)^{3/2} = \frac{1}{\left( 1 - \tau \right)^3 \tau^3} \left[ \left( G - 2 t \tau \right)^2 - 16 \pi^2 \left( 1 - \tau \right)^2 \tau^2 \left( G^2 - 4 G t \tau - 4 \tau \left( 1 - t^2 - \tau \right) \right) \right]^{3/2}.
\end{equation*}

The critical curves when the boundary of the spherical cap passes through a pole are of particular interest. In the case of the North Pole (that is $t = 1 - 2 v$, or equivalently $G = 0$), the curvature along the corresponding level curve reduces to
\begin{equation} \label{eq:curvature.special}
\kappa( \tau ) = - 16 \pi^2 \frac{\left( 1 - 2 v \right) \left( 1 - \tau \right) \left[ 2 \left( 1 - v \right) v - \left( 1 + 6 \left( 1 - v \right) v \right) \tau + 2 \tau^2 \right]}{\left[ \left( 1 - 2 v \right)^2 - 16 \pi^2 \left( 1 - \tau \right)^2 \tau \left( \tau - 4 \left( 1 - v \right) v \right) \right]^{3/2}}.
\end{equation}
For given $0 < \tau < 1$, the corresponding value(s) of $\alpha$ can be obtained from the relation
\begin{equation} \label{eq:curvature.special.cos.rel}
\cos( 2 \pi ( u - \alpha ) ) = \frac{\left( 1 - 2 v \right) \tau}{2 \sqrt{\left( 1 - v \right) v \left( 1 - \tau \right) \tau}}.
\end{equation}
From \eqref{eq:implicit.representation} it follows that the range for $\tau$ is $(0,\tau_1]$ with $\tau_1=4v ( 1 - v)$. For future reference we record that for $0 < v < 1$ with $v \neq 1/2$ the curvature \eqref{eq:curvature.special} vanishes only for 
\begin{equation} \label{eq:tau.critical}
\tau_v = \frac{1}{4} \left( 1 + 6 \left( 1 - v \right) v - \sqrt{1 - 4 \left( 1 - v \right) v \left( 1 - 9 \left( 1 - v \right) v \right)} \right).
\end{equation}
\MARKED{Black}{(The other solution lies outside the interval $[-1,1]$ as one can verify with Mathematica.)}

\begin{prop}
The curves of zeros of the curvature \eqref{eq:curvature} (as functions of $\tau$) assume their extrema at $\tau_v$ and $1-\tau_v$ with $\tau_v$ given in \eqref{eq:tau.critical}.
\end{prop}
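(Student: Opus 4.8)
The plan is to reduce the claim to the extremum criterion \eqref{eq:x.crit} and then to pin down the two critical values of $\tau$ by comparison with the critical level curve through the North Pole. Throughout take $0<v<1$ with $v\neq1/2$ (for $v=1/2$ the zero set of the curvature consists of the two vertical lines $\alpha=u\pm1/4$, which have no genuine extrema) and normalise $u=1/2$. We have already shown that, on each of the two strips cut out by the base lines $\alpha\in\{u,\,u\pm1/2\}$, the zero set of $\kappa$ in \eqref{eq:curvature} is the graph $\tau\mapsto\alpha(\tau)$ with $\cos(2\pi(u-\alpha(\tau)))=x(\tau)$, where $x(\tau)$ is the unique root of $Q(\tau;\cdot)$ in $(-1,1)$, and that these curves stay strictly between the base lines, on which $\sin(2\pi(u-\alpha))$ vanishes; hence $\alpha\mapsto\cos(2\pi(u-\alpha))$ is a local diffeomorphism along them, and the extrema of $\alpha(\tau)$ are exactly the zeros of $\dot x(\tau)$.

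First I would record the criterion and count the extrema. Differentiating $Q(\tau;x(\tau))=0$ gives \eqref{eq:Q.prime.relation}; since $x(\tau)$ is a simple root of $Q(\tau;\cdot)$ we have $Q^\prime(x(\tau))\neq0$, so $\dot x(\tau)=0$ if and only if $\dot p(\tau)\,x(\tau)+\dot q(\tau)=0$, i.e.\ if and only if $x(\tau)$ equals the right-hand side of \eqref{eq:x.crit}. The analysis surrounding \eqref{eq:x.crit} shows that $\dot x$ has exactly one zero in $(0,1/2)$ — a maximum of $x$, since $\ddot x<0$ there — and, by the symmetry $x(1-\tau)=-x(\tau)$ coming from $Q(\tau;x)=-Q(1-\tau;-x)$, exactly one zero in $(1/2,1)$ and none at $\tau=1/2$. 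Hence the zero curves of $\kappa$ have precisely two extrema in $0<\tau<1$, interchanged by $\tau\leftrightarrow1-\tau$.

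It then remains to show that $\tau=\tau_v$ satisfies \eqref{eq:x.crit}; the value $1-\tau_v$ follows by the symmetry just used, and by the count these are the only two. For this I would bring in the critical level curve through the North Pole, $t=1-2v$ (so $G=0$): along it the curvature is \eqref{eq:curvature.special}, which on its admissible range $\tau\in(0,4v(1-v)]$ vanishes precisely at $\tau_v$ of \eqref{eq:tau.critical}; equivalently, $\tau_v$ is the root in that range of $2(1-v)v-(1+6(1-v)v)\tau+2\tau^2=0$. The point of this level curve with $\tau=\tau_v$ therefore lies on the zero set of $\kappa$, so comparing \eqref{eq:curvature.special.cos.rel} with the defining relation $\cos(2\pi(u-\alpha))=x(\tau_v)$ of the zero curve — and using that the value on the right of \eqref{eq:curvature.special.cos.rel}, being a cosine, is the unique root of $Q(\tau_v;\cdot)$ in $(-1,1)$ — we obtain
\[
x(\tau_v)=\frac{(1-2v)\,\tau_v}{2\sqrt{(1-v)v\,(1-\tau_v)\tau_v}} .
\]
Substituting this, together with $2\tau_v^2=(1+6(1-v)v)\tau_v-2(1-v)v$ and its consequence $1-6(1-\tau_v)\tau_v=(1-6(1-v)v)(1-3\tau_v)$, into $\dot p(\tau_v)\,x(\tau_v)+\dot q(\tau_v)$ — with $\dot p,\dot q$ as recorded after \eqref{eq:x.crit} — one finds that $\dot p(\tau_v)\,x(\tau_v)$ and $\dot q(\tau_v)$ are negatives of one another, so the sum vanishes and \eqref{eq:x.crit} holds at $\tau_v$.

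The only genuine computation is this last verification: a finite simplification of rational and radical expressions, where the key intermediate facts are $\tau_v(1-2\tau_v)=2(1-v)v\,(1-3\tau_v)$ and $1-6(1-\tau_v)\tau_v=(1-6(1-v)v)(1-3\tau_v)$ (both immediate from the quadratic satisfied by $\tau_v$), after which both $\dot p(\tau_v)\,x(\tau_v)$ and $-\dot q(\tau_v)$ collapse to the same expression; this can equally well be checked with a computer algebra system, as is done elsewhere in this section. A minor point to watch is the identification, via \eqref{eq:curvature.special.cos.rel}, of the relevant root of $Q(\tau_v;\cdot)$ as the one in $(-1,1)$, which follows from uniqueness of that root together with the value being a cosine and with $\tau_v\in(0,4v(1-v)]$ so that \eqref{eq:curvature.special.cos.rel} indeed has a solution.
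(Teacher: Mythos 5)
Your proposal is correct and follows essentially the same route as the paper's proof: identify the value given by \eqref{eq:curvature.special.cos.rel} at $\tau=\tau_v$ with the root $x(\tau_v)$ of $Q$, deduce $\dot{x}(\tau_v)=0$ from \eqref{eq:Q.prime.relation}, pass to $1-\tau_v$ via $Q(\tau;x)=-Q(1-\tau;-x)$, and translate back through the cosine relation. The only difference is that where the paper invokes Mathematica, you verify the vanishing of $\dot{p}(\tau_v)x(\tau_v)+\dot{q}(\tau_v)$ by hand via the quadratic satisfied by $\tau_v$ (your identities $\tau_v(1-2\tau_v)=2(1-v)v(1-3\tau_v)$ and $1-6(1-\tau_v)\tau_v=(1-6(1-v)v)(1-3\tau_v)$ are correct and the cancellation indeed goes through), which is a welcome, fully explicit substitute.
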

\begin{proof}
Suppose that $u = 1/2$ and $0 < v < 1/2$. Then $0 < \tau_v < 1/2$. On observing that the right-hand side of \eqref{eq:curvature.special.cos.rel} for $\tau = \tau_v$ is also the zero $x(\tau_v)$ in the interval $(0,1)$ of the polynomial $Q$, it can be verified \MARKED{Black}{with the help of Mathematica} that the right-hand side of \eqref{eq:Q.prime.relation} vanishes and therefore $\dot{x}( \tau_v ) = 0$; that is, the zero $x(\tau)$ is extremal at $\tau = \tau_v$. Using the symmetry relation $Q( \tau; x ) = - Q( 1 - \tau; - x )$, the zero $x(\tau)$ is also extremal at $\tau = 1 - \tau_v$. By means of \eqref{eq:curvature.special.cos.rel} this translates into extrema of the curve of zeros of the curvature \eqref{eq:curvature}. A shift in $u$ (rotation of $\bsw$ about the polar axis) does not change the shape of the level curves and the general result follows.
\end{proof}

Substituting \eqref{eq:tau.critical} into the right-hand side of \eqref{eq:curvature.special.cos.rel} gives the extremal value a zero of $Q$ in $(-1,1)$ can assume, also cf. \eqref{eq:x.crit}:
\begin{equation*}
x(\tau_v) = \frac{1 - 2 v}{\sqrt{1 + 2 \left( 1 - v \right) v + \sqrt{1 - \left( 1 - v \right) v \left( 9 \left( 1 - 2 v \right)^2 - 5 \right)}}}.
\end{equation*}
It can be shown that $x(\tau_v)$ is a strictly monotonically decreasing function in $v$ which is symmetric with respect to $v = 1/2$. Hence $| x(\tau_v) | \leq x(0^+) = 1 / \sqrt{2}$. Using this bound in \eqref{eq:curvature.special.cos.rel} yields that $| \alpha - ( u \pm 1 / 4 ) | \leq 1/8$ (when wrapping around).

When moving along the critical level curve towards the lower side of the unit square, which is associated with the North Pole, we have
\begin{equation*}
\lim_{\tau \to 0} \kappa( \tau ) = - 32 \pi^2 \frac{1-2v}{| 1 - 2 v |^3} \left( 1 - v \right) v, \qquad 0 < v < 1, v \neq 1/2.
\end{equation*}
We conclude that the critical level curve with $t = 1 - 2v$ (associated with the North Pole) has precisely one symmetric (in the cylindrical view) pair of intersection points with the curves of zeros of the curvature function \eqref{eq:curvature} at $\tau = \tau_v$ in the strip $0 < \tau < 1$. A similar result holds for the level curve associated with the South Pole ($t = -( 1 - 2 v )$). 

\begin{prop} \label{prop:intersections}
Let $0 < v < 1/2$. Then the level curves with $t$ in the range $-( 1 - 2 v ) \leq t \leq 1 - 2 v$ have precisely one symmetric (in the cylindrical view) pair of intersection points with the curve of zeros of the curvature function \eqref{eq:curvature}. For $t$ in the ranges $-1 < t < - ( 1 - 2 v)$ or $1 - 2 v < t < 1$ there are either no intersection points, one pair of \MARKED{Black}{tangential} points, or two pairs.
\end{prop}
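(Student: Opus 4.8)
The plan is to turn the counting of intersection points into counting real roots, in a suitable $\tau$-interval, of the quartic appearing in \eqref{eq:kapp.along.curve}, and then to read off the answer from a sign count. First I would put $u=1/2$; by the invariance of the level curves and of the zero set of $\kappa$ under rotation of $\bsw$ about the polar axis this is harmless. By the symmetry about $\alpha=u$, every intersection point lying off the lines $\alpha=u$ and $\alpha=u\pm 1/2\bmod 1$ comes in a symmetric pair, and no intersection point lies on those lines (there $\cos(2\pi(u-\alpha))=\pm 1$, which will be seen to be impossible), so it suffices to count the values $\tau\in(0,1)$ at which $\mathcal{C}_{\bsw}$ with parameter $t$ meets the curvature zero set. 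Solving \eqref{eq:implicit.representation} for the cosine, a point $(\alpha,\tau)$ of $\mathcal{C}_{\bsw}$ satisfies $\cos(2\pi(u-\alpha))=y_t(\tau)$ with $y_t(\tau):=(t-(1-2v)(1-2\tau))/(4\sqrt{(1-v)v(1-\tau)\tau})$, valid on the $\tau$-interval where $|y_t|\le 1$; and by \eqref{eq:kapp.along.curve}, $\kappa=0$ there exactly when $P_t(\tau)=0$, where $P_t$ is the bracketed polynomial in \eqref{eq:kapp.along.curve} (of degree $\le 4$ in $\tau$, with degree $3$ precisely when $t=0$), since the prefactor $-16/((1-\tau)^2\tau^2)$ is nonzero on $(0,1)$.

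Next I would use the structure of $Q$ established above: for $\bsPhi(\alpha,\tau)\ne\pm\bsw$, $\kappa=0$ is equivalent to $Q(\tau;\cos(2\pi(u-\alpha)))=0$, and $Q(\tau;\cdot)=x^3+p(\tau)x+q(\tau)$ has three distinct real roots; the Sturm computation puts one of them, $x(\tau)$, in $(-1,1)$, and since the roots sum to $0$ the other two, $x_\pm(\tau)$, satisfy $x_-(\tau)<-1<1<x_+(\tau)$. From the explicit formulas for $p,q$ one checks $p(\tau)\to -3/2$ and $q(\tau)\to 0$, hence $x_\pm(\tau)\to\pm\sqrt{3/2}$ and $x(\tau)\to 0$, as $\tau\to 0^+$ or $\tau\to 1^-$, and all three roots are bounded on $(0,1)$ and depend smoothly on $\tau$. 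Since $P_t(\tau)$ is a strictly positive smooth multiple of $Q(\tau;y_t(\tau))=(y_t(\tau)-x_-(\tau))(y_t(\tau)-x(\tau))(y_t(\tau)-x_+(\tau))$, a root $\tau^{*}$ of $P_t$ corresponds to a point of $\mathcal{C}_{\bsw}$ precisely when $y_t(\tau^{*})=x(\tau^{*})$ (the alternatives force $|y_t(\tau^{*})|>1$, and $|y_t(\tau^{*})|=1$ is impossible as $\pm 1\notin\{x_-,x,x_+\}$), the multiplicity of $\tau^{*}$ equals the order of vanishing of $y_t-x$, and a double root is exactly a tangential contact of the two curves. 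Thus the number of symmetric pairs equals the number of solutions of $y_t(\tau)=x(\tau)$ in $(0,1)$.

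The computational core is the identity $\sgn y_t'(\tau)=\sgn(2t\tau-G)$ on $(0,1)$, where $G=t-(1-2v)$, together with the boundary behaviour ($y_t(\tau)\to+\infty$ or $-\infty$ as $\tau\to0^+$ according as $G>0$ or $G<0$, likewise as $\tau\to1^-$ according to the sign of $t+(1-2v)$) and, when $y_t$ has an interior critical point $\tau^{\circ}=G/(2t)$, the closed form $y_t(\tau^{\circ})=\pm\sqrt{t^2-(1-2v)^2}/(2\sqrt{(1-v)v})$; since $t^2-(1-2v)^2\in(0,4v(1-v))$ for $|t|\in(1-2v,1)$, this extremal value lies strictly in $(0,1)$, resp.\ in $(-1,0)$. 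For $t\in(-(1-2v),1-2v)$ the linear function $2t\tau-G$ is positive at both $\tau=0$ and $\tau=1$, so $y_t$ is strictly increasing from $-\infty$ to $+\infty$; then each of $y_t-x$, $y_t-x_+$, $y_t-x_-$ runs from $-\infty$ to $+\infty$ and hence changes sign an odd number of times, and none can have a tangency (a double root would push the total multiplicity of the roots of $P_t$ in $(0,1)$ past $\deg P_t\le 4$); since that total is $\le 4$, each changes sign exactly once, so $y_t=x$ has exactly one solution, i.e.\ one symmetric pair. Together with the two boundary values $t=\pm(1-2v)$ (one pair each, at $\tau_v$ resp.\ $1-\tau_v$, already established above), this gives the first assertion. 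For $t\in(1-2v,1)$, $y_t$ decreases then increases with strict minimum $y_t(\tau^{\circ})\in(0,1)$, so $y_t>0$ on $(0,1)$: $y_t-x_-$ never vanishes, while $y_t-x_+$ is positive at both ends and negative at $\tau^{\circ}$, hence changes sign at least twice, leaving at most multiplicity $2$ for $y_t-x$; as $y_t-x$ is positive at both ends its number of sign changes is even, so $y_t=x$ has no solution, one double solution, or two simple solutions --- that is, no intersection points, one tangential pair, or two pairs. The range $t\in(-1,-(1-2v))$ is handled symmetrically, with $x_+$ and $x_-$ interchanged ($y_t<0$, and $y_t(\tau^{\circ})\in(-1,0)$ lies above $x_-(\tau^{\circ})<-1$). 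The step I expect to be the main obstacle is proving the two displayed identities for $y_t'$ and for $y_t(\tau^{\circ})$ in clean form --- everything after that is sign bookkeeping --- together with the care needed to see that $x_\pm$ genuinely straddle $[-1,1]$ and stay bounded near the ends of $(0,1)$, which is what fixes the signs of $y_t-x_\pm$ there.
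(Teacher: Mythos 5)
Your argument is correct in substance, but it takes a genuinely different route from the paper. The paper's proof is qualitative and geometric: it fixes $u=1/2$, uses the monotonicity of the zero curve $\mathcal{Z}$ (increasing up to $\tau_v$, decreasing until $1-\tau_v$, then increasing), the fact that the critical level curves confine the increasing portions, the sign change of the curvature along $\Gamma_t$ to force an intersection, and then separation and ``bending away'' arguments (the curve $\Gamma_0$ through $(1/4,1/2)$, the direction of the gradient of $F$ versus the tangent of $\mathcal{Z}$) to exclude further intersections; for $1-2v<t<1$ it argues via convexity and the impossibility of a second sign change back. You instead convert everything into a one-dimensional root count: intersections at height $\tau$ correspond to solutions of $y_t(\tau)=x(\tau)$, the bracketed quartic $P_t$ in \eqref{eq:kapp.along.curve} is a positive analytic multiple of $(y_t-x_-)(y_t-x)(y_t-x_+)$, and the count follows from $\deg P_t\le 4$ combined with the monotonicity structure of $y_t$ (your identities $\sgn y_t'(\tau)=\sgn(2t\tau-G)$ and $y_t(\tau^\circ)=\pm\sqrt{t^2-(1-2v)^2}\big/\bigl(2\sqrt{(1-v)v}\bigr)$ both check out). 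This buys a sharper and more rigorous argument: it yields uniqueness \emph{and} transversality in the middle range without the paper's informal bending arguments, and it actually uses the degree-four bound that the paper records after \eqref{eq:kapp.along.curve} but never exploits in its proof. One small repair is needed: your claim $x_-(\tau)<-1<1<x_+(\tau)$ for \emph{all} $\tau\in(0,1)$ fails exactly at $\tau=v$ and $\tau=1-v$, where $Q(\pm1)=0$ (these heights correspond to the excluded points $\pm\bsw$), so one outer root equals $\pm1$ there. This does not damage the proof: one still has $x_+\ge1$ and $x_-\le-1$ with equality only at those two heights, and there $y_t(\tau)=\pm1$ would force $t=\pm1$, which is excluded; hence roots of $P_t$ coming from the outer factors still never correspond to points of the level curve, and the strict inequalities $y_t(\tau^\circ)<1\le x_+(\tau^\circ)$, respectively $y_t(\tau^\circ)>-1\ge x_-(\tau^\circ)$, that you use at the interior extremum survive. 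With that one-sentence patch (and citing, as you do, the paper's prior treatment of the critical levels $t=\pm(1-2v)$), your proof is complete.
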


The analogue result holds for $1/2 < t < 1$. (For $v = 1/2$ the level curves for distance $\sqrt{2}$ are the verticals at $\alpha = u \pm 1/4 \mod 1$ and coincide with the curve of zeros of $\kappa(\alpha, \tau)$ and also coincide with the critical curves. The other level curves have no intersections.)

\begin{proof}
Without loss of generality assume that $u = 1/2$. We have already established that either critical level curve has precisely one pair of symmetric (with respect to $\alpha = u$) intersection point with the two curves of zero curvature about the base lines $\alpha = u \pm 1/4$ at the values $\tau = \tau_v$ and $\tau = 1 - \tau_v$. These parameter values also give the position of the extrema of the zero curves, cf. Figure~\ref{fig:curvature}. The left zero curve $\mathcal{Z}$ is increasing for $\tau$ in $(0,\tau_v)$, decreasing for $\tau$ in $(\tau_v, 1 - \tau_v)$ and increasing again for $\tau$ in $(1 - \tau_v, 1)$. 

Let $-(1-2v) < t < 1 - 2v$ and $\Gamma_t$ denote the left half of the corresponding level curve starting at the left side at some point $(0,\tau_1)$ and ending at some point $(u,\tau_2)$. (The other half is symmetric.) We note that the part where the zero curve is increasing is contained in the regions separated off by the critical level curves. Thus, an intersection between $\mathcal{Z}$ and $\Gamma_t$ can only occur for $\tau$ in the interval $[t_v, 1 - t_v]$. The curvature along $\Gamma_t$ changes continuously (cf. \eqref{eq:kapp.along.curve} and subsequent formula) from negative to positive value. Hence, there is an intersection point of $\mathcal{Z}$ and $\Gamma_t$ \MARKED{Black}{and the $\Gamma_t$ cannot change abruptly}. In particular, both $\mathcal{Z}$ and $\Gamma_0$\footnote{The spherical cap is the half-sphere.} pass through $(1/4,1/2)$, which is their only intersection point because for $\tau \neq 1/2$ the vertical line $\alpha = 1/4$ separates both curves. A $\Gamma_t$ with $0 < t < 1 - 2 v$ ($-(1 - 2 v) < t < 0$) has to intersect $\mathcal{Z}$ in the strip $1/4 < \alpha < 1/2$ ($0 < \alpha < 1/4$). Inspecting the partial derivatives of $F$ (cf. \eqref{eq:partial.derivatives}) it follows that the Gradient of $F$ at the intersection point, which is the outward normal at the level curve $\Gamma_t$, points into the upper left part; that is the tangent vector at $\Gamma_t$ at the intersection point shows to the right whereas the tangent vector at $\mathcal{Z}$ at this point shows to the left. Moreover, if $0 < t < 1 - 2 v$, then the curve $\Gamma_0$ separates $\mathcal{Z}$ and $\Gamma_t$ for $\tau \geq 1/2$ and in the remaining part both curves $\mathcal{Z}$ and $\Gamma_t$ bend away from each other because $\mathcal{Z}$ is decreasing with growing $\tau$ and the curvature along $\Gamma_t$ becomes positive. Consequently, there is only one intersection point of $\Gamma_t$ and $\mathcal{Z}$. A similar argument holds for $- ( 1 - 2 v) < t < 0$. 

Let $1 - 2 v < t < 1$. Let $\Gamma_t$ denote the left half of the level curve. For $t$ sufficiently close to $1$ there is no intersection with $\mathcal{Z}$ and the level curve is convex. If $\Gamma_t$ and $\mathcal{Z}$ intersect in only one point, then the level curve is still convex, since the curvature function $\kappa(\alpha, \tau)$ has positive sign in the section between $\mathcal{Z}$ and the vertical line $\alpha = u$. In this case $\mathcal{Z}$ and $\Gamma_t$ share a common tangent at the intersection point. If the curvature along $\Gamma_t$ changes its sign to negative, then it has to become positive again, since it is positive when crossing the vertical $\alpha = u$. But after changing back to positive curvature, both curves are bending away from each other. So, there can be no other intersection point.

By symmetry with respect to the line $\alpha = u$ one has pairs of symmetric intersection points.

Shifting $u$ does not change the form of the curves and their relative positions. This completes the proof.
\end{proof}

\section{Proofs}
\label{sec:proofs}

\begin{proof}[Proof of Lemma~\ref{lem2}]
For $t=1$ the spherical cap is a point and for $t=-1$ it is the whole sphere. Their pre-images (a point and the whole unit square) are convex. So, we may assume that $-1 < t < 1$.

{\bf Case (i):} Let $\bsw$ be either the North or the South Pole. Then the pre-images of the boundary of spherical caps centred at $\bsw$ are horizontal lines in the unit square. Hence, the pre-image of such a spherical cap is convex.

{\bf Case (ii):} Let $\bsw$ be on the equator (that is $v = 1/2$). We know that the curvature \eqref{eq:curvature} vanishes along the lines $\alpha = u \pm 1/4 \mod 1$. First, suppose that $u = 1/4$. Then the pre-image of any spherical cap centred at $\bsw$ with boundary points at most Euclidean distance $\sqrt{2}$ away from $\bsw$ is convex. For a larger spherical cap $C$ it follows that its complement $\overline{C}$ with respect to the sphere (centred at the antipodal point $-\bsw$) has the property that points on the boundary have distance $\leq \sqrt{2}$ from $-\bsw$. Hence, the pre-image of $\overline{C}$ is convex. When rotating $\bsw$ about the polar axis (that is shifting $u$), the vertical boundaries of the square cut these convex sets into two parts. We conclude that the pre-image of a spherical cap centred at $\bsw$ or its complement with respect to the sphere is the union of at most two convex sets.

{\bf Case (iii):} Let $\bsw$ be neither the poles nor located at the equator. Without loss of generality we may assume that $\bsw$ is in the upper half of the sphere; that is $0 < v < 1 / 2$. (Otherwise we can use reflection with respect to the equator.) 
First, let us consider the canonical position $u = 1/2$. Let $- ( 1 - 2 v ) \leq t \leq 1 - 2 v$. Then, by Proposition~\ref{prop:intersections}, there are precisely two (symmetric) points along the level curve at which the curvature vanishes, say at $(\alpha_1,\tau_t)$ and $(\alpha_2, \tau_t)$. This yields a decomposition of the unit square into three vertical rectangles such that either the part above or below the level curve is convex. Let $1 - 2 v < t < 1$. By Proposition~\ref{prop:intersections} the level curve is already convex or there are two pair of symmetric points at which the curvature along the level curve vanishes and a sign change occurs. Hence, there are numbers $\tau_1 < \tau_2$ such that the level curve is convex for $\tau \leq \tau_1$ and convex for $\tau \geq t_2$. The remaining middle part can be covered by a convex isosceles trapezoid which in turn can be split by some vertical line contained in the level set associated with the level curve. Thus, one has again two convex polygons which are divided into a convex and non-convex part by the level curve. A similar argument holds for $- 1 < t < - ( 1 - 2 v )$. 

A shift of $u$ does not increase the number of vertical rectangles needed for $0 \leq t \leq 1 - 2 v$. (In fact, one may even reduce the number of elements of the partition.) In the case $1 - 2 v < t < 1$ one may need to use a covering of the pre-image of the spherical cap with up to $7$ pieces. A more precise analysis is listed in Table~\ref{tbl:parts}.
\end{proof}

\begin{table}[h!t]
\caption{\label{tbl:parts} Worst-case admissible convex covering with $p$ part and $q$ of which are convex. The vertical lines show canonical positions of the vertical borders of $[0,1]^2$.}
\begin{minipage}{0.25\linewidth}
\begin{tabular}{c|cc|c}
  & $p$ & $q$ & $2p-q$ \\
\hline
A & $4$ & $2$ & $6$ \\
B & $5$ & $3$ & $7$ \\
\rowcolor[gray]{.8}
C & $7$ & $3$ & $11$ \\
D & $6$ & $3$ & $9$ \\
E & $6$ & $4$ & $8$
\end{tabular}
\end{minipage}
\begin{minipage}{0.6\linewidth}
\includegraphics[scale=.09]{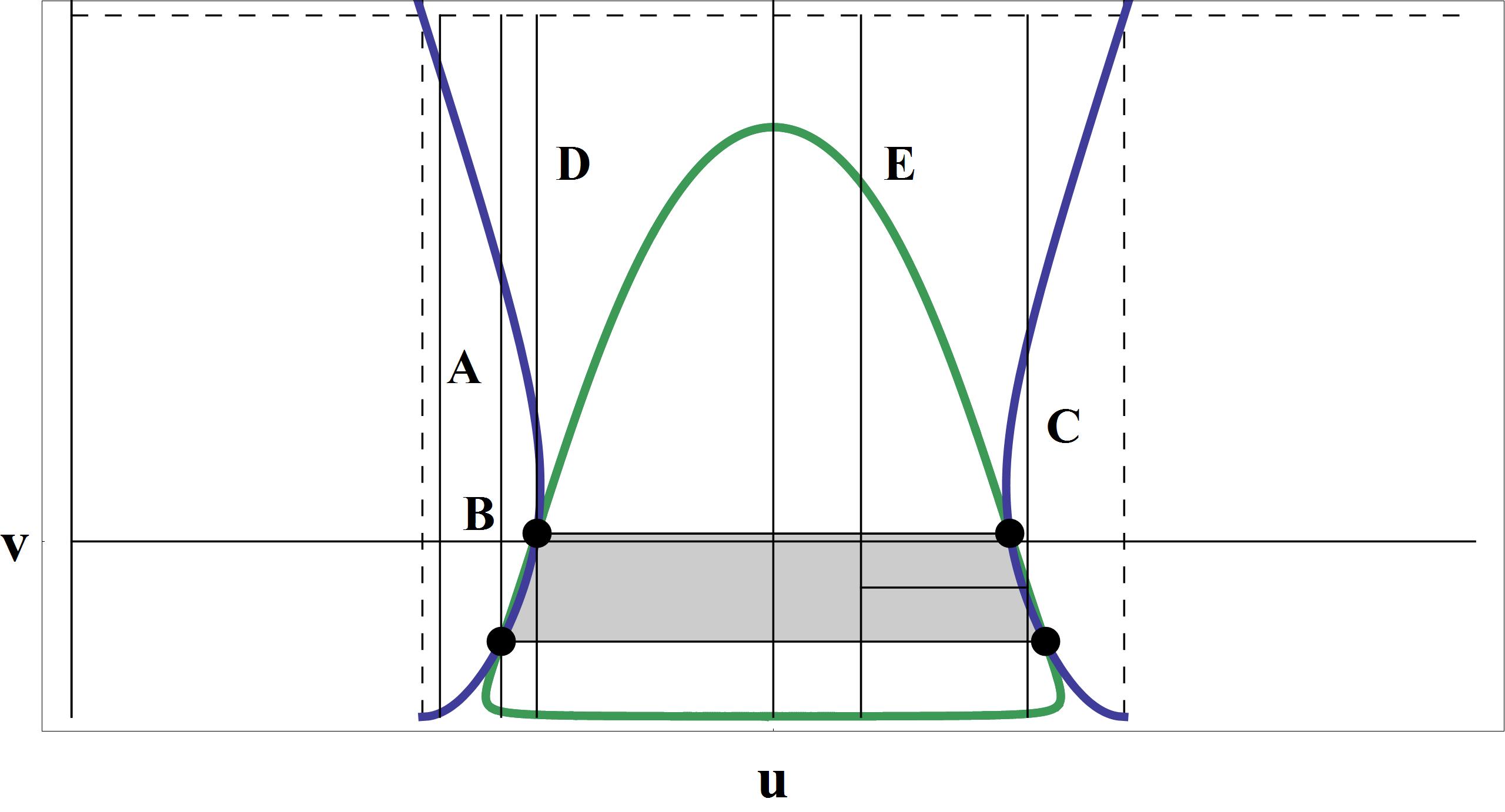}
\end{minipage}
\end{table}

\begin{proof}[Proof of Proposition~\ref{propo}]
Radon's theorem (see e.g. \cite[Theorem 4.1]{barv}) states that any set of $d+2$ points from $\mathbb{R}^d$ can be partitioned into two disjoint subsets whose convex hulls intersect. Particularly, let $A$ denote a set of 5 points on the sphere. Then by Radon's theorem there exists a partitioning of $A$ into disjoint subsets whose convex hulls intersect. Thus the set $A$ can not be shattered by the class of halfspaces. Since every spherical cap is the intersection of the sphere with an appropriate halfspace, the set $A$ can also not be shattered by the class of spherical caps. Thus the VC dimension of the class of spherical caps is at most 5.\\
On the other hand, let the set $\hat{A}$ consist of the points of a regular simplex, which lie on the sphere. Then some simple considerations show that the set $\hat{A}$ is shattered by the class of spherical caps. Thus the VC dimension of the class of spherical caps (and therefore of course also the VC dimension of the class $\mathcal{C}$ of spherical caps for which the centre $\mathbf{w}$ and the height $t$ are rational numbers, which was used in Section \ref{sec:rand}) equals 5.
\end{proof}

\begin{proof}[Proof of Theorem \ref{tha1}]
As mentioned directly after the statement of Theorem \ref{tha1}, the lower bound in the theorem follows directly from (\ref{exp}). To prove the upper bound we use Theorem \ref{thmtala}. By Proposition \ref{propo} the VC dimension of the class $\mathcal{C}$ in Section \ref{sec:rand} is 5. For simplicity we assume that the constant in Theorem \ref{thmtala} is an integer. Then for any $s \geq 2 K$
$$
\IP \left\{ D(Z_N) \geq \frac{s}{\sqrt{N}} \right\} \leq \frac{1}{s} \left(\frac{K s^2}{5} \right)^{5} e^{-2s^2},
$$
and consequently we have
\begin{eqnarray*}
\mathbb{E} (D(Z_N)) & \leq & \frac{2K}{\sqrt{N}} + \sum_{s = 2K}^\infty \left( \frac{(s+1)}{\sqrt{N}} \cdot \IP \left\{ D(Z_N) \geq \frac{s}{\sqrt{N}} \right\} \right) \\
& \leq & \frac{2K}{\sqrt{N}} + \sum_{s = 2K}^\infty \frac{s+1}{s \sqrt{N}} \left(\frac{K s^2}{5} \right)^{5} e^{-2s^2} \\
& \leq & \frac{\hat{K}}{\sqrt{N}}
\end{eqnarray*}
for some appropriate constant $\hat{K}$. This proves the theorem.
\end{proof}

\begin{proof}[Proof of Theorem \ref{tha2}]
Let $C^* \subseteq \mathcal{C}$ denote a hemisphere, i.e. a spherical cap whose normalised surface area measure is $\sigma(C^*)=1/2$. By the central limit theorem for any $t \geq 0$
$$
\IP \left\{ \left| \frac{1}{N} \sum_{n=0}^{N-1} 1_{C^*} (X_n) - \sigma(C^*) \right| \leq t N^{-1/2} \right\} \to \frac{\sqrt{2}}{\sqrt{\pi}} \int_{-t}^t e^{-2u^2}~du \quad \textrm{as $N \to \infty$},
$$ 
and consequently, for any given $\varepsilon>0$ and sufficiently small $C_3(\varepsilon)>0$,
$$
\IP \left\{ \left| \frac{1}{N} \sum_{n=0}^{N-1} 1_{C^*} (X_n) - \sigma(C^*) \right| \leq C_3 N^{-1/2} \right\} \leq \varepsilon/2
$$
for sufficiently large $N$. Since $D(Z_N) \geq \left| \frac{1}{N} \sum_{n=0}^{N-1} 1_{C^*} (X_n) - \sigma(C^*) \right|$, this implies
\begin{equation} \label{lower}
\IP \left\{ D(Z_N) \leq C_3 N^{-1/2} \right\} \leq \varepsilon/2
\end{equation}
for sufficiently large $N$.\\
On the other hand, by Theorem \ref{thmtala} for any given $\varepsilon>0$ and sufficiently large $C_4(\varepsilon)$
\begin{equation} \label{upper}
\IP \left\{ D(Z_N) \geq \frac{C_4}{\sqrt{N}} \right\} \leq \varepsilon/2
\end{equation}
for sufficiently large $N$. Combining (\ref{lower}) and (\ref{upper}) we obtain
$$
\IP \left\{\frac{C_3}{\sqrt{N}} \leq D(Z_N) \leq \frac{C_4}{\sqrt{N}} \right\} \geq 1 - \varepsilon
$$ 
for sufficiently large $N$, which proves Theorem \ref{tha3}.
\end{proof}

\section{Acknowledgements}

The authors are grateful to Peter Kritzer and Friedrich Pillichshammer for inspiring suggestions. In particular, F. Pillichshammer pointed out Theorem~\ref{thm_larcher}.

\bibliographystyle{abbrv}
\bibliography{bibliography}

\end{document}